\documentclass[reqno]{amsart}

\usepackage[T1]{fontenc}
\usepackage{enumerate, amsmath, amsfonts, amssymb, amsthm, mathrsfs}
\usepackage{bbm}
\usepackage[lmargin=3.5cm,rmargin=3.5cm,top=2.9cm,bottom=2.9cm]{geometry}

\usepackage[all]{xy}
\usepackage{tikz}
\usetikzlibrary{arrows, decorations, shapes}
\usepackage{tikz-cd}
\usepackage{colonequals}
\usetikzlibrary{arrows}
\usepackage[colorlinks, citecolor=blue]{hyperref}
\usepackage[noabbrev,capitalise]{cleveref}
\usepackage{multirow}
\usepackage{ulem}
\usepackage{cancel}
\usepackage{caption} 
\captionsetup[table]{skip=10pt}

%The two lines below makes numbers not italic in result environments:
\usepackage{enumitem}
\setlist[enumerate]{format=\normalfont}

%%%%%%%%%%%%%%%%%%%%%%%%%%%%%%%%%%%%%%

% theorems
\newtheorem{Corollary}[equation]{Corollary}
\newtheorem{Lemma}[equation]{Lemma}
\newtheorem{Proposition}[equation]{Proposition}
\newtheorem{Theorem}[equation]{Theorem}

\theoremstyle{definition}
\newtheorem{Definition}[equation]{Definition}
\newtheorem{Example}[equation]{Example}
\newtheorem{Remark}[equation]{Remark}
\newtheorem{Question}{Question}

\numberwithin{equation}{section}

% \newcommand{\hip}[1]{\textcolor{blue}{#1}}
% \newcommand{\joh}[1]{\textcolor{magenta}{#1}}
% \newcommand{\kar}[1]{\textcolor{brown}{#1}}
% \newcommand{\son}[1]{\textcolor{orange}{#1}}
% \newcommand{\jen}[1]{\textcolor{black!30!cyan}{#1}}
% \newcommand{\yan}[1]{\textcolor{purple}{#1}}

%marginpars coded nicely
\usepackage{setspace}
\setlength{\marginparwidth}{1in}
\newcommand{\marginparstretch}{0.6}
\let\oldmarginpar\marginpar
\renewcommand\marginpar[1]{\-\oldmarginpar[\framebox{\setstretch{\marginparstretch}\begin{minipage}{\marginparwidth}{\raggedleft\tiny #1}\end{minipage}}]{\framebox{\setstretch{\marginparstretch}\begin{minipage}{\marginparwidth}{\raggedright\tiny #1}\end{minipage}}}}

\title{An introduction to monomorphism categories}

\author[Kvamme]{Sondre Kvamme}
\address{Department of Mathematical Sciences\\ 
        NTNU\\ 
        NO-7491 Trondheim\\ 
        Norway}
\email{sondre.kvamme@ntnu.no}

\usepackage{url}
\begin{document}

\keywords{%
}
\subjclass[2020]{}

\begin{abstract}
This manuscript was written for the Proceedings of the ICRA 2022 in Buenos Aires. It can be divided into four parts: The first part is an introduction to the theory of monomorphism categories, including a short survey on some representation theoretic results. The second part is a summary of some of the main results in \cite{GKKP23} on monomorphism categories. It also includes a new result, namely a classification of all monomorphism categories of finite type over cyclic abelian groups. The third part concerns the relationship between submodule categories and $p$-valuated abelian groups. The last part contains a proof of wildness of a certain monomorphism category, rectifying a statement in the literature.
\end{abstract}

\maketitle

\setcounter{tocdepth}{1}
\tableofcontents

	\section{Introduction}

The study of (separated) monomorphism categories can be traced back to the beginning of the 20th century, with work by Hilton and Miller  \cite{Hil07,Mil04}, and later with work of Birkhoff \cite{Bir35}. It was revitalized by Ringel and Schmidmeier \cite{RS06,RS08,RS08b}, who used Auslander--Reiten theory to study its representation theory. Nowadays it is an active area of research, with both its representation theory, as well as its connections to other areas of mathematics being studied. This manuscript is supposed to serve as an introduction to the theory of monomorphism categories.

Section \ref{Section:Monomorphic representations of quivers} introduces monomorphism categories over Artin algebras for arbitrary finite acyclic quivers and recalls results on functorially finiteness and almost split sequences. The relationship between monomorphism category over selfinjective Artin algebras and Gorenstein projective modules is also explained. Note that commutative uniserial Artin algebras, such as truncated polynomial rings $k[x]/(x^n)$ or cyclic abelian groups $\mathbb{Z}/(p^n)$ of order $p^n$ for $p$ a prime, are selfinjective. Monomorphism categories over such rings have historically been of great importance. 

The monomorphism category over a finite dimensional algebra consists of the representations which are projective as representations over the underlying field. In Subsection \ref{Subsection:Monomorphism categories over fields} we explain how this can be used to give a short proof of functorially finiteness. In Subsection \ref{Subsection:PreviousWork} we survey some of the known results on the representation theory of monomorphism categories. In particular, we consider the representation type, the indecomposable representations, and the Auslander--Reiten quiver.

In Section \ref{Section:An epivalence and the Mimo construction} we summarize some of the work in \cite{GKKP23}. The central role is played by an epivalence between the injective stable category of the monomorphism category, and representations over the stable injective modules category. This gives a bijection on indecomposables, and we explain how the Mimo construction is an inverse to this bijection. We also discuss known applications of this result. In particular, we explain how monomorphism categories of uniserial rings of length $2$ are related to representations of quivers over fields, and how monomorphism categories of two different uniserial rings of length $3$ with the same residue field can be related. We also present a new application by classifying all monomorphism categories over $\mathbb{Z}/(p^n)$ of finite representation type. 

There are many results on the representation theory of monomorphism categories over truncated polynomial ring $k[x]/(x^n)$, but less for cyclic abelian groups $\mathbb{Z}/(p^n)$, even though both are local uniserial rings of the same length. One reason is that $\mathbb{Z}/(p^n)$ is not an algebra over a field, and hence some tools are not available, such as covering theory. One approach which has been successfully applied to $\mathbb{Z}/(p^n)$ is the theory of $p$-valuated groups and valuated trees, as studied in \cite{HRW77a,HRW77,RW79,HRW84,RW99}. In particular, from \cite{RW99} one gets all the indecomposable submodule representations over $\mathbb{Z}/(p^n)$ for $n\leq 5$, which are the cases of finite representation type. However, one needs to work a bit to translate the results from $p$-valuated groups into statements about submodule categories. In Section \ref{Section:ValuatedGroups} we try to make this transition a bit easier by explaining how the category of $p$-valuated groups is equivalent to an additive quotient of the submodule category. We then survey some results on valuated trees. We finish by writing down an explicit description of the indecomposables submodule representations over $\mathbb{Z}/(p^n)$ for $n\leq 5$.  Note that most of them (i.e. the simply presented ones) arise from valuated trees. 

In Section \ref{Section:Correction to the literature} we give a correction to a claim in \cite{Sim02}. More precisely, in \cite[Theorem 1.4]{Sim02} it is stated that the monomorphism category of $\mathbb{A}_4$ over $k[x]/(x^4)$ is not wild. We show that it is in fact wild. The statement and proof was kindly provided by the referee.

There is also a large body of work which we do not mention relating monomorphism categories to other areas of mathematics. For example, submodule categories have connections to finite rank Butler groups and representations of posets \cite{Arn00}, Hall polynomials \cite{Sch12},  Littlewood--Richardson tableaux \cite{KS15,KS18,KST19,KS22,Sch11}, metabelian groups \cite{Bir35,Sch05a}, preprojective algebras \cite{RZ14}, singularity categories \cite{Che11,HM21}, tiled orders \cite{Rum81}, and weighted projective lines \cite{KLM13}. They also satisfy Brauer-Thrall type theorems \cite{AHK21}, a version of the Auslander and Ringel-Tachikawa theorem \cite{Moo10}, and are closely connected to stable Auslander algebras \cite{Eir17,HM21,Haf21a}. Monomorphism categories of finite acyclic quivers are used to describe projective representations \cite{EE05}, flat representations  \cite{EOT04}, Gorenstein projective representations \cite{EEG09,EHS13,LZ13,Zha11,JL21,HB24}, Gorenstein flat representations \cite{DELO21}, and cotorsion pairs \cite{DELO21,EHHS13,EPZ20,HJ19a}. They have been studied using tilting theory \cite{Che11,Zha11}, and are related to weighted projective lines \cite{Len11,KLM13}, as well as topological data analysis \cite{BBOS20}. A generalization of monomorphism categories to quivers with monomial relations has been considered in \cite{LZ17,ZX19}.

Except for Theorem \ref{Theorem:RepTypeUniserial} we do not claim to prove any new results. Most of the statements should be well-known to the experts, albeit not all might exist in a written form. Throughout the paper $k$ is a field, $p$ is a prime number, and $\Lambda$ is an Artin $R$-algebra over some commutative artinian ring $R$. The category of left $\Lambda$-modules is denoted by $\operatorname{Mod}\Lambda$, and the category of finitely generated left $\Lambda$-modules by $\operatorname{mod}\Lambda$. 
 
	\section{Monomorphic representations of quivers}\label{Section:Monomorphic representations of quivers}

	\subsection{Definition and basic properties}\label{Subsection:Definition and basic properties}
	
	Fix a finite acyclic quiver $Q$ with vertices $Q_0$ and arrows $Q_1$. A \textit{representation of} $Q$ \textit{over} $\Lambda$ is a collection $(M_\mathtt{i},h_\alpha)_{\mathtt{i}\in Q_0,\alpha\in Q_1}$ where $M_\mathtt{i}$ is a left $\Lambda$-module for each vertex $\mathtt{i}$ in $Q_0$, and $h_\alpha\colon M_\mathtt{i}\to M_\mathtt{j}$ is a morphism of $\Lambda$-modules  for each arrow $\alpha\colon \mathtt{i}\to \mathtt{j}$ in $Q_1$. The \textit{category of representation} $\operatorname{rep}(Q,\operatorname{Mod}\Lambda)$ has as objects the representations of $Q$ over $\Lambda$, and as morphisms $(M_\mathtt{i},h_\alpha)\to (M'_\mathtt{i},h'_\alpha)$ a collection of morphism $(\varphi_\mathtt{i}\colon M_\mathtt{i}\to M'_\mathtt{i})_{\mathtt{i}\in Q_0}$ such that the following diagram commutes for every arrow $\alpha\colon \mathtt{i}\to \mathtt{j}$
	\[
	\begin{tikzcd}
	M_\mathtt{i}\arrow{r}{h_\alpha}\arrow{d}{\varphi_\mathtt{i}} &M_\mathtt{j}\arrow{d}{\varphi_\mathtt{j}}\\
	M'_\mathtt{i}\arrow{r}{h'_\alpha} &M'_\mathtt{j}.
	\end{tikzcd}
	\]  
 Let $\operatorname{rep}(Q,\operatorname{mod}\Lambda)$ be the full subcategory of $\operatorname{rep}(Q,\operatorname{Mod}\Lambda)$ of finitely generated representations, i.e. all representations $(M_\mathtt{i},h_\alpha)$ where $M_\mathtt{i}$ is finitely generated over $\Lambda$ for each $\mathtt{i}\in Q_0$.
	\begin{Definition}
		The \textit{(separated) monomorphism category} $\operatorname{Mono}(Q,\Lambda)$ is the full subcategory of $\operatorname{rep}(Q,\operatorname{Mod}\Lambda)$ consisting of all representations $(M_\mathtt{i},h_\alpha)$ where
		\[
		h_{\mathtt{i},\operatorname{in}}\colon \bigoplus_{\substack{\alpha\in Q_1\\t(\alpha)=\mathtt{i}}}M_{s(\alpha)}\xrightarrow{(h_\alpha)_\alpha} M_\mathtt{i}.\] 
		is monic for every vertex $\mathtt{i}$. The full subcategory of finitely generated monomorphic representations is denoted by $\operatorname{mono}(Q,\Lambda)$
	\end{Definition}

In this manuscript we are interested in the representation theory of $\operatorname{mono}(Q,\Lambda)$.
 
	\begin{Example}
		If $Q=\mathtt{1}\to \mathtt{2}$ is the $\mathbb{A}_2$-quiver, then we write $\operatorname{Sub}(\Lambda)$ for $\operatorname{Mono}(Q,\Lambda)$ and $\operatorname{sub}(\Lambda)$ for $\operatorname{mono}(Q,\Lambda)$. The category $\operatorname{Sub}(\Lambda)$ consists of pairs $(M,N)$ where $M$ is a left $\Lambda$-module, and $N$ is a submodule of $M$. The study of submodule embeddings dates back to the beginning of the 20th century, see \cite{Hil07,Mil04,Mil05}. 
	\end{Example}
	
	\begin{Example}
		Assume $Q=\mathtt{1}\to \mathtt{2}\to \cdots \to \mathtt{n}$ is the linearly oriented $\mathbb{A}_n$-quiver. Then $\operatorname{Mono}(Q,\Lambda)$ consists of flags $M_1\subseteq M_2\subseteq \cdots \subseteq M_n$ in $\operatorname{Mod}\Lambda$. 
	\end{Example}
	
	\begin{Example}\label{Example:NonLinearlyOrientedA_3}
		Assume $Q=\mathtt{1}\to \mathtt{2}\leftarrow\mathtt{3}$ is a non-linearly oriented $\mathbb{A}_3$-quiver. Then $\operatorname{Mono}(Q,\Lambda)$ consists of triples $(M_1,M_2,M_3)$ where $M_2\in \operatorname{Mod}\Lambda$, and $M_1$ and $M_3$ are submodules of $M_2$ with trivial intersection, e.g. $M_1\cap M_3=(0)$. This is why some authors use the word separated.   
	\end{Example}
	
	\begin{Example}
		Let $Q$ be a finite acyclic quiver such that there is at most one path between each pair of vertices in $Q$. Consider the poset whose elements are the vertices of $Q$, and where $\mathtt{i}\leq \mathtt{j}$ if there is a path from $\mathtt{i}$ to $\mathtt{j}$ in $Q$. Poset representations over $\Lambda$ in the sense of \cite{NR72} (see also \cite{Sch08}) are then precisely the representations $(M_\mathtt{i},h_\alpha)$ of $Q$ over $\Lambda$ for which $h_\alpha$ is a monomorphism for each $\alpha\in Q_1$. If $Q$ is a linearly oriented $\mathbb{A}_n$-quiver, then these are precisely the monomorphic representations defined above. However, not every poset representation is in the monomorphism category for a general quiver. Indeed, this can be seen from Example \ref{Example:NonLinearlyOrientedA_3}, since poset representations do not need to have trivial intersection.
	\end{Example}
	
	We are interested in the representation theory of $\operatorname{mono}(Q,\Lambda)$, the category of finitely generated monomorphic representations. In general, it is not an abelian category, so many classical tools, such as Auslander--Reiten theory, are not directly applicable. On the other hand, one can realize $\operatorname{mono}(Q,\Lambda)$ as a subcategory of a module category of an Artin algebra. Indeed, consider the path algebra $\Lambda Q=\bigoplus_{p\in Q_{\geq 0}}\Lambda$, where $Q_{\geq 0}$ denotes the set of path in $Q$. For a path $p$ and $\lambda\in \Lambda$ let $\lambda p$ denote the corresponding element in $\Lambda Q$. Then the multiplication in $\Lambda Q$ is given by 
	$$(\lambda_1 p_1)\cdot(\lambda_2 p_2)=
	\begin{cases}
	(\lambda_1\lambda_2)p_1p_2, & \text{if}\ s(p_1)=t(p_2) \\
	0, & \text{otherwise}.
	\end{cases}$$
	where $\lambda_1\lambda_2$ is obtained from the multiplication in $\Lambda$ and $p_1p_2$ is the concatenation of the paths $p_1$ and $p_2$. Note that $\Lambda Q$ is again an Artin $R$-algebra. The category $\operatorname{mod}\Lambda Q$ of finitely generated left $\Lambda Q$-modules is equivalent to the category $\operatorname{rep}(Q,\operatorname{mod}\Lambda)$ of representations of $Q$ over $\Lambda$. This can be seen in a similar way as for classical representations for path algebras over fields. We will identify representations over $Q$ and modules over $\Lambda Q$ using this equivalence,. Note that the monomorphism category  $\operatorname{mono}(Q,\Lambda)$ becomes a full subcategory of $\operatorname{mod}\Lambda Q$. 
	
	Next we introduce the homological properties that the monomorphism category satisfies. Let $\Gamma$ be an Artin algebra, and let  $\mathcal{C}$ be a full subcategory of $\operatorname{mod}\Gamma$. 
	\begin{enumerate}
		\item $\mathcal{C}$ is \textit{closed under extensions} if for any exact sequence 
		\[
		0\to M_1\to M_2\to M_3\to 0
		\]
		in $\operatorname{mod}\Gamma$ with $M_1,M_3\in \mathcal{C}$, we must have that $M_2\in \mathcal{C}$.
		\item $\mathcal{C}$ is \textit{closed under submodules} if for any object $M\in \mathcal{C}$ and and any monomorphism $N\to M$ we have that $N\in \mathcal{C}$.
		\item $\mathcal{C}$ is a \textit{torsion-free class} if it is closed under extensions and submodules. 
		\item $\mathcal{C}$ is \textit{contravariantly finite} if for any $M\in \operatorname{mod}\Gamma$ there exists a morphism $g\colon N\to M$ with $N\in\mathcal{C}$ such that any morphism $g'\colon N'\to M$ with $N'\in\mathcal{C}$ must factor through $g$. We call such a morphism $g$ for a \textit{right} $\mathcal{C}$-\textit{approximation}.
		\item $\mathcal{C}$ is \textit{covariantly finite} if for any $M\in \operatorname{mod}\Gamma$ there exists a morphism $g\colon M\to N$ with $N\in\mathcal{C}$ such that any morphism $g'\colon M\to N'$ with $N'\in\mathcal{C}$ must factor through $g$. We call such a morphism $g$ for a \textit{left} $\mathcal{C}$-\textit{approximation}.
		\item $\mathcal{C}$ is \textit{functorially finite} if it is contravariantly finite and covariantly finite.
	\end{enumerate}

	\begin{Theorem}\label{Lemma:MonoExtClosedFunctFinite}
		The monomorphism category $\operatorname{mono}(Q,\Lambda)$ is a functorially finite torsion-free class of $\operatorname{mod}\Lambda Q$.
	\end{Theorem}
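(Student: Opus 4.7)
The proof splits into three pieces. \emph{Closure under submodules and extensions.} If $N\hookrightarrow M\in\operatorname{mono}(Q,\Lambda)$, the incoming map of $N$ at each vertex is a restriction of the monic incoming map of $M$, hence monic. For extensions, given $0\to M'\to M\to M''\to 0$ with outer terms monomorphic, form at each vertex $\mathtt{i}$ the commutative diagram with exact rows whose top row is the direct sum (over arrows $\alpha$ into $\mathtt{i}$) of the pointwise sequences at $s(\alpha)$, bottom row is the pointwise sequence at $\mathtt{i}$, and vertical arrows are the incoming maps. Both outer verticals are monic, so the snake lemma forces the middle one to be monic.

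\emph{Covariant finiteness} is then automatic. Over the artin algebra $\Lambda Q$, every torsion-free class is the right half of a torsion pair $(\tc,\operatorname{mono}(Q,\Lambda))$ with $\tc=\{M:\Hom(M,N)=0\text{ for every }N\in\operatorname{mono}(Q,\Lambda)\}$; indeed every $M$ has a largest subobject $t(M)$ in $\tc$, and $M/t(M)$ lies in $\operatorname{mono}(Q,\Lambda)$ by maximality. The projection $M\twoheadrightarrow M/t(M)$ is then a left approximation.

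The main obstacle is \emph{contravariant finiteness}. Naive candidates for a right approximation fail---for example, $\operatorname{mono}(Q,\Lambda)$ is not closed under sums of subobjects, so there is in general no largest monomorphic subrepresentation of $M$ to take. My plan is to construct $\varphi\colon N\to M$ by recursion along a topological ordering of $Q_0$, enlarging modules at each vertex by injective envelopes to provide the room for extensions. Set $N_\mathtt{i}=M_\mathtt{i}$ and $\varphi_\mathtt{i}=\operatorname{id}$ at source vertices; at a non-source vertex $\mathtt{i}$, having already defined $(N_\mathtt{j},\varphi_\mathtt{j})$ at every predecessor $\mathtt{j}$, take
$$
N_\mathtt{i}=I\Bigl(\bigoplus_{\alpha\colon t(\alpha)=\mathtt{i}}N_{s(\alpha)}\Bigr)\oplus M_\mathtt{i},\qquad h'_\alpha=(\iota_\alpha,\,h_\alpha\varphi_{s(\alpha)}),
$$
with $I(-)$ the injective envelope in $\operatorname{mod}\Lambda$, $\iota_\alpha$ the canonical inclusion into the injective envelope, and $\varphi_\mathtt{i}$ the projection onto $M_\mathtt{i}$. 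The first coordinate of each incoming map is monic, so $N\in\operatorname{mono}(Q,\Lambda)$; finite generation is preserved because injective envelopes of finitely generated modules over an artin algebra remain finitely generated. To lift any $f\colon X\to M$ with $X\in\operatorname{mono}(Q,\Lambda)$ to $\tilde f\colon X\to N$, I proceed vertex by vertex along the same ordering: at $\mathtt{i}$, the second coordinate of $\tilde f_\mathtt{i}$ is forced to equal $f_\mathtt{i}$, and the first coordinate must extend a prescribed map from the submodule $\bigoplus_\alpha X_{s(\alpha)}\hookrightarrow X_\mathtt{i}$ (available precisely because $X$ is monomorphic) into the injective module $I(\bigoplus_\alpha N_{s(\alpha)})$, which exists by injectivity. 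The delicate step I expect to absorb most of the bookkeeping is verifying that these vertex-wise extensions are compatible with the structure maps $h'_\alpha$ so that $\tilde f$ is a morphism of representations; this reduces to the naturality of $f$ and the inductive identity $\varphi_\mathtt{j}\tilde f_\mathtt{j}=f_\mathtt{j}$.
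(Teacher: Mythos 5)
Your proof is correct, and while the closure-under-extensions-and-submodules step and the torsion-pair derivation of covariant finiteness agree with the paper (the latter appears in the paper's alternative proof for finite-dimensional algebras in Subsection 2.2), your treatment of contravariant finiteness is genuinely different. The paper's main proof simply cites \cite[Theorem 3.1]{LZ13} for functorial finiteness (which under the hood uses the Mimo construction, $M'_\mathtt{i}=M_\mathtt{i}\oplus\bigoplus_{p\in Q_{\geq 0},\,t(p)=\mathtt{i}}J_{s(p)}$ with $J_\mathtt{j}$ the injective envelope of the kernel of the incoming map); the paper's alternative proof instead invokes \cite[Proposition 4.7(b)]{AS80} by exhibiting $N=D(\Lambda)\otimes_k kQ$ with $\operatorname{mono}(Q,\Lambda)=\operatorname{sub}N$. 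Your vertex-recursive enlargement $N_\mathtt{i}=I\bigl(\bigoplus_\alpha N_{s(\alpha)}\bigr)\oplus M_\mathtt{i}$ is a cruder, non-minimal cousin of the Mimo construction: you inject all of $\bigoplus_\alpha N_{s(\alpha)}$ rather than only the kernel of the incoming map, and you organize the recursion by arrows rather than by paths, so the resulting module is much larger. What you lose in size you gain in transparency: the verification is entirely self-contained, requires no preliminary theory of relative injectives or Gorenstein projectives, and the lifting argument reduces cleanly to one injectivity extension per vertex plus the naturality of $f$ — all of which you correctly identified and which do check out. The paper's \cite[Prop.\ 4.7(b)]{AS80} route is shorter but less constructive and requires knowing the cogenerator characterization; your argument produces the right approximation explicitly, at the cost of producing a far-from-minimal one (contrast with Theorem \ref{Theorem:MinimalRightApproximation}).
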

	
	\begin{proof}
		Given a short exact sequence $0\to (M_\mathtt{i},h_\alpha)\to (M'_\mathtt{i},h'_\alpha)\to (M''_\mathtt{i},h''_\alpha)\to 0$ of representations of $Q$ over $\Lambda$, we get short exact sequences 
		\[
		\begin{tikzcd}
		0 \arrow{r} & \bigoplus_{\substack{\alpha\in Q_1\\t(\alpha)=\mathtt{i}}}M_{s(\alpha)}\arrow{r}\arrow{d}{h_{\mathtt{i},\operatorname{in}}} &\bigoplus_{\substack{\alpha\in Q_1\\t(\alpha)=\mathtt{i}}}M'_{s(\alpha)}\arrow{d}{h'_{\mathtt{i},\operatorname{in}}}\arrow{r} & \bigoplus_{\substack{\alpha\in Q_1\\t(\alpha)=\mathtt{i}}}M''_{s(\alpha)}\arrow{d}{h''_{\mathtt{i},\operatorname{in}}}\arrow{r} & 0\\
		0 \arrow{r}& M_\mathtt{i}\arrow{r} &M'_\mathtt{i}\arrow{r} & M''_\mathtt{i} \arrow{r}& 0.
		\end{tikzcd}
		\]
		for each vertex $\mathtt{i}$ in $Q$. Using this and the fact that monomorphisms are closed under extensions and under inclusions, we get that $\operatorname{mono}(Q,\Lambda)$ is closed under extensions and under submodules. The fact that $\operatorname{mono}(Q,\Lambda)$ is functorially finite holds by \cite[Theorem 3.1]{LZ13} (note that the theorem is only stated for representations over finite-dimensional algebras, but the proof also works for Artin algebras).
	\end{proof}

 Functorially finiteness of the monomorphism category was first proven in \cite{RS08} for the $\mathbb{A}_2$-quiver. It was extended to linearly oriented $\mathbb{A}_n$-quivers for any $n\geq 1$ in \cite{XZZ14}. It was proven for a general finite acyclic quiver in \cite{LZ13}. In the next subsection we give a short self-contained proof of this fact for monomorphic representations over finite-dimensional algebras 
	
	In \cite{AS81} Auslander and Smalø considered almost split sequences in extension closed subcategories of module categories. In particular, they introduced the notion of a subcategory having almost split sequences. Their motivation was to extend Auslander--Reiten theory from module categories to more general settings. In particular, they show
	\begin{Theorem}\label{Theorem:AuslanderSmalø}
		Let $\mathcal{C}$ be a functorially finite extension closed subcategory of $\operatorname{mod}\Gamma$ for an Artin algebra $\Gamma$. Then $\mathcal{C}$ has almost split sequences.
	\end{Theorem}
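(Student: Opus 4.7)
The plan is to produce, for each indecomposable non-Ext-projective $M\in\mathcal{C}$, an almost split sequence in $\mathcal{C}$ ending at $M$; this (together with its dual) is exactly the content of $\mathcal{C}$ having almost split sequences. A $\Gamma$-projective object is automatically Ext-projective in $\mathcal{C}$, so I may restrict to $M$ indecomposable and non-projective in $\mod\Gamma$. Classical Auslander--Reiten theory supplies the almost split sequence $\eta\colon 0\to\tau M\xrightarrow{u}E\xrightarrow{v}M\to 0$ in $\mod\Gamma$, and the idea is to coerce $\eta$ into $\mathcal{C}$ by approximation. Using covariant finiteness of $\mathcal{C}$, pick a minimal left $\mathcal{C}$-approximation $f\colon\tau M\to N$ and form the pushout
\[
\begin{tikzcd}
0\arrow{r} & \tau M\arrow{r}{u}\arrow{d}{f} & E\arrow{r}{v}\arrow{d} & M\arrow{r}\arrow[equals]{d} & 0\\
0\arrow{r} & N\arrow{r}{u'} & F\arrow{r}{v'} & M\arrow{r} & 0.
\end{tikzcd}
\]
Extension closure of $\mathcal{C}$ together with $N,M\in\mathcal{C}$ gives $F\in\mathcal{C}$, so the bottom row $\eta_{\mathcal{C}}$ is a short exact sequence in $\mathcal{C}$.

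Next I would check that $\eta_{\mathcal{C}}$ is almost split. Right almost splitness of $v'$ is immediate: any non-split epimorphism $g\colon X\to M$ with $X\in\mathcal{C}$ factors through $v$ by the almost split property of $\eta$, and composing with the pushout map $E\to F$ produces a factorization through $v'$. For non-splitness, the hypothesis that $M$ is not Ext-projective in $\mathcal{C}$ furnishes some non-split sequence $0\to A\to B\to M\to 0$ with $A,B\in\mathcal{C}$. By the universal property of the almost split sequence, this arises as the pushout of $\eta$ along some $\phi\colon\tau M\to A$, and the $\mathcal{C}$-approximation property of $f$ gives $\phi=\psi f$ for some $\psi\colon N\to A$. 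A split $\eta_{\mathcal{C}}$ would then force $\phi_{*}[\eta]=\psi_{*}f_{*}[\eta]=0$ in $\Ext^{1}_{\Gamma}(M,A)$, contradicting nonsplitness of the original sequence.

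The main obstacle is upgrading right almost splitness to a \emph{genuine} almost split sequence, i.e.\ ensuring that $N$ is indecomposable and that $u'$ is left almost split. Here the minimality of $f$ is essential: the standard argument shows that any endomorphism of the short exact sequence $\eta_{\mathcal{C}}$ which is not an automorphism induces, via the pushout and the universal property of $\eta$, a self-factorization of $f$ through itself which minimality forces to be in the radical of $\End(N)$. This implies $\End(\eta_{\mathcal{C}})$ is local, which in turn forces both $N$ indecomposable and $\eta_{\mathcal{C}}$ almost split in $\mathcal{C}$. The dual construction, using contravariant finiteness and a minimal right $\mathcal{C}$-approximation of $\tau^{-1}N$ for an indecomposable non-Ext-injective $N\in\mathcal{C}$, handles almost split sequences starting at $N$ and completes the proof.
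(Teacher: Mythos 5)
The paper itself gives no argument here---it cites \cite[Theorem 2.4]{AS81}---so you are supplying a proof where none was written. Unfortunately your argument has a genuine gap. The non-splitness step asserts that any non-split short exact sequence $\zeta\colon 0\to A\to B\to M\to 0$ ``arises as the pushout of $\eta$ along some $\phi\colon\tau M\to A$.'' That is not what the almost split property yields: applying right almost splitness of $v$ to the non-split epimorphism $B\to M$ gives a morphism of short exact sequences \emph{from} $\zeta$ \emph{to} $\eta$, i.e.\ a map $\alpha\colon A\to\tau M$ with $\alpha_*[\zeta]=[\eta]$, which is the opposite direction. The pushout map $\Hom_\Gamma(\tau M,A)\to\Ext^1_\Gamma(M,A)$ is in general not surjective, so no such $\phi$ need exist, and the contradiction you aim for does not materialize.

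In fact the construction itself fails, not just the verification. Let $\Gamma=k(\mathtt 1\to\mathtt 2\to\mathtt 3)$, write $P_{\mathtt i}$, $I_{\mathtt i}$, $S_{\mathtt i}$ for the indecomposable projective, injective and simple at vertex $\mathtt i$, and let $\mathcal C$ be the full subcategory of representations $Z$ for which the structure map $Z_{\mathtt 1}\to Z_{\mathtt 2}$ is an isomorphism; concretely $\mathcal C=\add(S_{\mathtt 3}\oplus I_{\mathtt 2}\oplus P_{\mathtt 1})$, a functorially finite extension-closed subcategory. Take $M=I_{\mathtt 2}$; it is not Ext-projective in $\mathcal C$ since $\Ext^1_\Gamma(I_{\mathtt 2},S_{\mathtt 3})\neq 0$. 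Here $\tau M=P_{\mathtt 2}\notin\mathcal C$, and since $\Hom_\Gamma(P_{\mathtt 2},S_{\mathtt 3})=0$ the minimal left $\mathcal C$-approximation of $\tau M$ is the inclusion $f\colon P_{\mathtt 2}\hookrightarrow P_{\mathtt 1}$. As $f$ is not a split monomorphism, it factors through the left almost split map $u\colon\tau M\to E$ of $\eta$, so $f_*[\eta]=0$ and your $\eta_{\mathcal C}$ is \emph{split}. The actual almost split sequence in $\mathcal C$ ending at $I_{\mathtt 2}$ is $0\to S_{\mathtt 3}\to P_{\mathtt 1}\to I_{\mathtt 2}\to 0$; the relative translate is $S_{\mathtt 3}$, which is not $N=P_{\mathtt 1}$. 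So pushing $\eta$ out along the minimal left $\mathcal C$-approximation of $\tau M$ is not a correct recipe for the relative almost split sequence, and a different mechanism is needed, which is exactly what \cite{AS81} supplies.
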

	
	\begin{proof}
		This follows from \cite[Theorem 2.4]{AS81}. 
	\end{proof} 
	
	\begin{Corollary}
		The category $\operatorname{mono}(Q,\Lambda)$ has almost split sequences.
	\end{Corollary}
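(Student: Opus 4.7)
The plan is to deduce the corollary as an immediate consequence of the two preceding results, Theorem \ref{Lemma:MonoExtClosedFunctFinite} and Theorem \ref{Theorem:AuslanderSmalø}. Concretely, I would take $\Gamma = \Lambda Q$, which is an Artin $R$-algebra as noted in the paragraph before Theorem \ref{Lemma:MonoExtClosedFunctFinite}, and then apply Theorem \ref{Theorem:AuslanderSmalø} to the subcategory $\mathcal{C} = \operatorname{mono}(Q,\Lambda) \subseteq \operatorname{mod}\Lambda Q$.

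To verify the hypotheses of Theorem \ref{Theorem:AuslanderSmalø}, I would invoke Theorem \ref{Lemma:MonoExtClosedFunctFinite}, which states that $\operatorname{mono}(Q,\Lambda)$ is a functorially finite torsion-free class of $\operatorname{mod}\Lambda Q$. By definition, a torsion-free class is closed under extensions (and under submodules, though only extension-closure is needed here), so together with functorial finiteness this matches the input required by the Auslander--Smalø theorem. One then concludes that $\operatorname{mono}(Q,\Lambda)$ has almost split sequences.

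There is essentially no obstacle at this stage, since the role of the corollary is precisely to package the two preceding theorems. All of the substantive work has already been carried out: the verification of extension-closure by the snake-lemma style diagram in the proof of Theorem \ref{Lemma:MonoExtClosedFunctFinite}, the functorial finiteness imported from \cite[Theorem 3.1]{LZ13}, and the existence of almost split sequences in functorially finite extension-closed subcategories supplied by \cite[Theorem 2.4]{AS81}.
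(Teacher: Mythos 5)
Your argument is exactly the paper's: combine Theorem \ref{Lemma:MonoExtClosedFunctFinite} (functorially finite torsion-free class, hence in particular extension-closed) with the Auslander--Smal\o{} result Theorem \ref{Theorem:AuslanderSmalø} applied to $\Gamma=\Lambda Q$. Nothing to add or correct.
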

	
	\begin{proof}
		This follows from Theorem \ref{Lemma:MonoExtClosedFunctFinite} and Theorem \ref{Theorem:AuslanderSmalø}.
	\end{proof}
	
	We end with the observation that the monomorphism category can be identified with a different well-studied subcategory when $\Lambda$ is a selfinjective algebra. 
	
	\begin{Theorem}\label{Theorem:GorensteinProjMono}
		Assume $\Lambda$ is a selfinjective algebra. The following hold:
		\begin{enumerate}
			\item\label{Theorem:GorensteinProjMono:1} $\Lambda Q$ is $1$-Iwanaga-Gorenstein, i.e. has injective dimension $1$ as a left and as a right module over itself.
			\item\label{Theorem:GorensteinProjMono:2} The monomorphism category $\operatorname{mono}(Q,\Lambda)$ is equal to the subcategory 
			\[
			\operatorname{Gproj}\Lambda Q=\{M\in \operatorname{mod}\Lambda Q\mid \operatorname{Ext}^1_{\Lambda Q}(M,\Lambda Q)=0  \}
			\]
			of Gorenstein projective $\Lambda Q$-modules.
		\end{enumerate}
	\end{Theorem}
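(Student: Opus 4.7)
The plan is to use the left and right adjoints of the evaluation functors at each vertex. For every $\mathtt{i}\in Q_0$, let $L_\mathtt{i},R_\mathtt{i}\colon \operatorname{mod}\Lambda\to \operatorname{mod}\Lambda Q$ denote the left and right adjoints of $\operatorname{ev}_\mathtt{i}$, given explicitly by $L_\mathtt{i}(N)_\mathtt{j}=\bigoplus_{p\colon \mathtt{i}\to \mathtt{j}}N$ and $R_\mathtt{i}(N)_\mathtt{j}=\bigoplus_{p\colon \mathtt{j}\to \mathtt{i}}N$, the direct sums being indexed by paths in $Q$. Since $\operatorname{ev}_\mathtt{i}$ is exact, $L_\mathtt{i}$ preserves projectives and $R_\mathtt{i}$ preserves injectives; in particular, every projective $\Lambda Q$-module is a direct sum of modules of the form $L_\mathtt{i}(P)$ with $P$ projective over $\Lambda$, and analogously for injectives via $R_\mathtt{i}$.

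For \eqref{Theorem:GorensteinProjMono:1}, I would construct, for any representation $M$, a natural short exact sequence
\[
0 \to M \to \bigoplus_{\mathtt{i}\in Q_0} R_\mathtt{i}(M_\mathtt{i}) \to \bigoplus_{\alpha\in Q_1} R_{s(\alpha)}(M_{t(\alpha)}) \to 0,
\]
whose first map is the tuple of units of adjunction and whose second map combines $R_{s(\alpha)}(h_\alpha)$ with the natural transformation $R_{t(\alpha)}\to R_{s(\alpha)}$ induced by precomposition with $\alpha$. Exactness is verified vertex-by-vertex, using that every nontrivial path in an acyclic quiver factors uniquely as its last arrow followed by a shorter path. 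Specialising to $M=\Lambda Q$, each component $(\Lambda Q)_\mathtt{i}=e_\mathtt{i}\Lambda Q$ is a free left $\Lambda$-module, hence injective since $\Lambda$ is selfinjective; the two right-hand terms are then injective $\Lambda Q$-modules, giving an injective coresolution of $\Lambda Q$ of length at most one. Applying the same argument to $Q^{\mathrm{op}}$ yields the corresponding bound on the right side, establishing \eqref{Theorem:GorensteinProjMono:1}.

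For \eqref{Theorem:GorensteinProjMono:2}, the $1$-Iwanaga-Gorenstein property from \eqref{Theorem:GorensteinProjMono:1} together with standard Gorenstein homological algebra gives $\operatorname{Gproj}\Lambda Q=\{M : \operatorname{Ext}^1_{\Lambda Q}(M,\Lambda Q)=0\}$, which coincides with the class of submodules of finitely generated projective $\Lambda Q$-modules. The inclusion $\operatorname{Gproj}\Lambda Q\subseteq \operatorname{mono}(Q,\Lambda)$ follows because each projective $L_\mathtt{i}(P)$ is itself monomorphic---its incoming map at a vertex is injective by the unique factorisation of paths through their last arrow---while $\operatorname{mono}(Q,\Lambda)$ is closed under submodules by Theorem \ref{Lemma:MonoExtClosedFunctFinite}. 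For the reverse inclusion, given $M\in \operatorname{mono}(Q,\Lambda)$ I would apply $\operatorname{Hom}_{\Lambda Q}(M,-)$ to the coresolution of $\Lambda Q$ from \eqref{Theorem:GorensteinProjMono:1}; by the adjunction $\operatorname{Hom}_{\Lambda Q}(M,R_\mathtt{i}(N))\cong \operatorname{Hom}_\Lambda(M_\mathtt{i},N)$ the vanishing of $\operatorname{Ext}^1(M,\Lambda Q)$ reduces to showing that any family of $\Lambda$-linear maps $g_\alpha\colon M_{s(\alpha)}\to e_{t(\alpha)}\Lambda Q$ is a coboundary, i.e.\ arises from $\Lambda$-linear maps $f_\mathtt{i}\colon M_\mathtt{i}\to e_\mathtt{i}\Lambda Q$ via the appropriate combination. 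I would solve this by induction on vertices in a linear extension of the path order: at a vertex $\mathtt{j}$ the coboundary identity prescribes $f_\mathtt{j}$ on the submodule $\bigoplus_{\alpha\colon \mathtt{k}\to \mathtt{j}}M_\mathtt{k}\hookrightarrow M_\mathtt{j}$ (well-defined by the monomorphicity of $M$), and this extends to all of $M_\mathtt{j}$ because $e_\mathtt{j}\Lambda Q$ is injective over $\Lambda$. The main obstacle is the careful setup of the coresolution in \eqref{Theorem:GorensteinProjMono:1}, in particular pinning down the signs in the differential and checking exactness at the middle term; once this is in place, the remainder of the proof is formal.
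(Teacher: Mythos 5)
Your proposal is correct, but it takes a genuinely different route from the paper. The paper simply cites \cite[Theorem~3.5]{EEGI07} for part~\eqref{Theorem:GorensteinProjMono:1} and \cite[Corollary~6.1]{LZ13} for part~\eqref{Theorem:GorensteinProjMono:2}, whereas you give a self-contained proof from first principles. Your coresolution
\[
0 \to M \to \bigoplus_{\mathtt{i}\in Q_0} R_\mathtt{i}(M_\mathtt{i}) \to \bigoplus_{\alpha\in Q_1} R_{s(\alpha)}(M_{t(\alpha)}) \to 0
\]
is the dual of the standard projective resolution for quiver representations; it is exact, and specializing to $M=\Lambda Q$ does indeed give an injective coresolution of length one once you observe that each $e_\mathtt{i}\Lambda Q$ is free, hence injective, over the selfinjective $\Lambda$, and that $R_\mathtt{i}$ preserves injectives. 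For part~\eqref{Theorem:GorensteinProjMono:2}, your reduction of $\operatorname{Ext}^1_{\Lambda Q}(M,\Lambda Q)=0$ to a coboundary problem via the adjunction isomorphism $\operatorname{Hom}_{\Lambda Q}(M,R_\mathtt{i}(N))\cong\operatorname{Hom}_\Lambda(M_\mathtt{i},N)$ is correct, and the inductive construction of $(f_\mathtt{i})$ along a linear extension of the path order works: the prescribed values on $\bigoplus_{\alpha\colon \mathtt{k}\to\mathtt{j}}M_\mathtt{k}$ descend to the image of $h_{\mathtt{j},\operatorname{in}}$ precisely because that map is injective (this is exactly where monomorphicity of $M$ enters), and extend to all of $M_\mathtt{j}$ because $e_\mathtt{j}\Lambda Q$ is $\Lambda$-injective. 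Likewise, the inclusion $\operatorname{Gproj}\Lambda Q\subseteq\operatorname{mono}(Q,\Lambda)$ via ``Gorenstein projectives = submodules of projectives over a $1$-Iwanaga-Gorenstein ring,'' monomorphicity of the $L_\mathtt{i}(P)$, and closure of $\operatorname{mono}(Q,\Lambda)$ under submodules is sound. What your approach buys is a transparent, elementary argument that makes the role of selfinjectivity (injectivity of $\Lambda$-free modules) and the role of the monomorphism condition (well-definedness of the lift on the incoming submodule) explicit; what it costs is having to set up and verify the coresolution and its differential carefully, including signs, as you acknowledge. The paper's citations, by contrast, rely on more general machinery from the quoted sources and keep this manuscript short.
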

	
	\begin{proof}
		The first statement follows from \cite[Theorem 3.5]{EEGI07}, and the second statement from \cite[Corollary 6.1]{LZ13}.
	\end{proof}
	
	The representation theory of Gorenstein projective modules has been studied in \cite{Rin13,CSZ18} for particular classes of algebras. The result above tells us that the representation theory of the monomorphism category is another such an instance, corresponding to classes of algebras of the form $\Lambda Q$ with $\Lambda$ selfinjective.

	\subsection{Monomorphism categories over fields}\label{Subsection:Monomorphism categories over fields} 
	
	Fix a field $k$. Assume $\Lambda$ is a finite-dimensional algebra over $k$. In this case the monomorphic representations over $\Lambda$ can be identified with certain bimodules which are projective as one-sided modules, which we show in this subsection. This makes certain homological properties easier to show. We will use this to give a short proof of functorially finiteness for the monomorphism category. 
	
	To start with we consider representations over the base field $k$. Note that the following result is a consequence of Theorem \ref{Theorem:GorensteinProjMono}, since $kQ$ is hereditary and the Gorenstein projective and the projective modules therefore coincide. For completeness we still include a proof. 
	
	\begin{Proposition}\label{Proposition:MonomorphismOverField}
		The monomorphism category $\operatorname{mono}(Q,k)$ is equal to the subcategory of finite-dimensional projective $kQ$-modules.
	\end{Proposition}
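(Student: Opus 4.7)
The plan is to prove both inclusions directly, constructing an explicit isomorphism for the non-trivial direction.

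For the easy inclusion, I would show that every finite-dimensional projective $kQ$-module lies in $\operatorname{mono}(Q,k)$. Since both classes are closed under finite direct sums and direct summands, it suffices to check this on each indecomposable projective $P_\mathtt{i}=kQ\cdot e_\mathtt{i}$. Under the equivalence $\operatorname{mod}kQ\simeq\operatorname{rep}(Q,\operatorname{mod}k)$, the space $(P_\mathtt{i})_\mathtt{j}$ has a basis given by paths from $\mathtt{i}$ to $\mathtt{j}$ and each arrow acts by left-concatenation. The ``in'' map at a vertex $\mathtt{l}$ then identifies $\bigoplus_{\alpha\colon \mathtt{j}\to\mathtt{l}}(P_\mathtt{i})_\mathtt{j}$ with the subspace of $(P_\mathtt{i})_\mathtt{l}$ spanned by paths of length $\geq 1$, and this is injective because every non-trivial path ending at $\mathtt{l}$ decomposes uniquely as an arrow into $\mathtt{l}$ composed with a shorter path.

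For the converse, given $M=(M_\mathtt{i},h_\alpha)\in\operatorname{mono}(Q,k)$, I would choose for each vertex $\mathtt{i}$ a $k$-linear complement $N_\mathtt{i}$ of $\operatorname{Im}(h_{\mathtt{i},\operatorname{in}})$ inside $M_\mathtt{i}$ (possible since $k$ is a field), and define a morphism of representations
\[
\varphi\colon\bigoplus_{\mathtt{i}\in Q_0}P_\mathtt{i}\otimes_k N_\mathtt{i}\longrightarrow M
\]
by sending, at vertex $\mathtt{j}$, a pure tensor $p\otimes n$ (with $p$ a path from $\mathtt{i}$ to $\mathtt{j}$ and $n\in N_\mathtt{i}$) to $h_p(n)\in M_\mathtt{j}$. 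A direct check that $\varphi$ commutes with the arrow actions shows this is a morphism of $kQ$-modules from a projective object into $M$, so it suffices to prove $\varphi$ is an isomorphism.

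I would establish both surjectivity and injectivity of $\varphi$ simultaneously by induction on vertices in a topological ordering of $Q$ (available because $Q$ is acyclic). At a vertex $\mathtt{j}$, paths ending at $\mathtt{j}$ split into the trivial path $e_\mathtt{j}$ and concatenations $\alpha\cdot p'$ with $\alpha\colon\mathtt{k}\to\mathtt{j}$ an arrow. Surjectivity at $\mathtt{j}$ is immediate once one knows $\operatorname{Im}(\varphi_\mathtt{k})=M_\mathtt{k}$ for all predecessors, since then $\operatorname{Im}(\varphi_\mathtt{j})\supseteq N_\mathtt{j}+\sum_\alpha h_\alpha(M_\mathtt{k})=N_\mathtt{j}+\operatorname{Im}(h_{\mathtt{j},\operatorname{in}})=M_\mathtt{j}$. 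For injectivity, write an element of $\ker(\varphi_\mathtt{j})$ as $n+\sum_\alpha t^{(\alpha)}$, where $n\in N_\mathtt{j}$ corresponds to the trivial path and $t^{(\alpha)}$ collects the tensors whose path starts with $\alpha$; the resulting relation $0=n+\sum_\alpha h_\alpha(\varphi_\mathtt{k}(t^{(\alpha)}))$ lives in the direct sum $N_\mathtt{j}\oplus\operatorname{Im}(h_{\mathtt{j},\operatorname{in}})$, forcing $n=0$ and $\sum_\alpha h_\alpha(\varphi_\mathtt{k}(t^{(\alpha)}))=0$, after which the monomorphism hypothesis kills each $\varphi_\mathtt{k}(t^{(\alpha)})$ and the inductive hypothesis gives $t^{(\alpha)}=0$.

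The main obstacle is bookkeeping the path combinatorics cleanly. The critical place where the monomorphism assumption is used is precisely the step that turns a vanishing sum in $\operatorname{Im}(h_{\mathtt{j},\operatorname{in}})$ into a vanishing of each summand; without the monic hypothesis this last step fails, and indeed a non-monomorphic representation such as a simple supported at a non-source vertex is not projective.
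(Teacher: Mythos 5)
Your proof is correct, but it takes a genuinely different route from the paper. The paper argues homologically: it tensors the projective resolution $0\to\bigoplus_{\alpha:t(\alpha)=\mathtt{i}}P_{s(\alpha)}\to P_\mathtt{i}\to S_\mathtt{i}\to 0$ of each simple right module $S_\mathtt{i}$ with $M$, identifies $\operatorname{Tor}_1^{kQ}(S_\mathtt{i},M)$ with $\operatorname{Ker}h_{\mathtt{i},\operatorname{in}}$, and then invokes the characterization of flatness via vanishing of $\operatorname{Tor}_1$ against simples together with the equivalence of flat and projective for finite-dimensional modules. You instead give a direct, constructive argument: choosing complements $N_\mathtt{i}$ of $\operatorname{Im}(h_{\mathtt{i},\operatorname{in}})$ and building an explicit $kQ$-linear map $\bigoplus_\mathtt{i}P_\mathtt{i}\otimes_k N_\mathtt{i}\to M$ which you prove to be an isomorphism by induction along a topological ordering of $Q_0$. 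Your approach is more elementary (no $\operatorname{Tor}$, no flatness) and buys more: it exhibits $M$ as $\bigoplus_\mathtt{i}P_\mathtt{i}^{\dim\operatorname{Coker}h_{\mathtt{i},\operatorname{in}}}$, i.e.\ identifies the multiplicities of the indecomposable projective summands, recovering the classical Gabriel--Ringel description of projective quiver representations. The paper's argument is shorter and slots neatly into the surrounding discussion of Gorenstein projectivity (in fact the paper notes the proposition is also a special case of Theorem~\ref{Theorem:GorensteinProjMono}), but yours is self-contained in a way that could be preferable if one wanted to avoid homological machinery. One small presentational point: in your injectivity step the symbol $t^{(\alpha)}$ is used both for the summand of $t$ at vertex $\mathtt{j}$ supported on paths ending with $\alpha$ and for the element at vertex $s(\alpha)$ obtained by stripping $\alpha$; worth disambiguating, though the argument is unambiguous in content.
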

	
	\begin{proof}
		Let $S_\mathtt{i}$ denote the simple right $kQ$-module concentrated at vertex $\mathtt{i}$ and $P_\mathtt{i}$ denote the indecomposable projective right $kQ$-module with top $S_\mathtt{i}$. Then we have projective resolutions
		\[
		0\to \bigoplus_{\alpha\in Q_1, t(\alpha)=\mathtt{i}}P_{s(\alpha)}\xrightarrow{} P_\mathtt{i}\to S_\mathtt{i}\to 0
		\]
		where $P_{s(\alpha)}\to P_\mathtt{i}$ is the canonical morphism induced from the arrow $\alpha\colon s(\alpha)\to \mathtt{i}$. Now let $(M_\mathtt{i},h_\alpha)_{\mathtt{i}\in Q_0,\alpha\in Q_1}$ be a representation of $Q$ over $k$, and let $M$ denote the corresponding left $kQ$-module. Applying $-\otimes_{kQ}M$ to the exact sequence above gives an exact sequence
		\[
		0 \to \operatorname{Tor}_1^{kQ}(S_\mathtt{i},M)\to \bigoplus_{\alpha\in Q_1, t(\alpha)=\mathtt{i}}P_{s(\alpha)}\otimes_{kQ}M \to P_\mathtt{i}\otimes_{kQ}M\to S_\mathtt{i}\otimes_{kQ}M\to 0
		\]
		where $\operatorname{Tor}_1^{kQ}(P_\mathtt{i},M)=0$ since $P_\mathtt{i}$ is projective. Now for any vertex $\mathtt{j}$ we have a canonical isomorphism $P_\mathtt{j}\otimes_{kQ}M\cong M_\mathtt{j}$. Under these isomorphisms the exact sequence above is isomorphic to the exact sequence
		\[
		0 \to \operatorname{Ker}h_{\mathtt{i},\operatorname{in}}\to \bigoplus_{\alpha\in Q_1,t(\alpha)=\mathtt{i}}M_{s(\alpha)}\xrightarrow{h_{\mathtt{i},\operatorname{in}}} M_\mathtt{i}\to \operatorname{Coker}h_{\mathtt{i},\operatorname{in}}\to 0
		\]
		Hence $M$ is in the monomorphism category if and only if $\operatorname{Tor}_1^{kQ}(S_\mathtt{i},M)=0$ for all simple right modules $S_\mathtt{i}$. This latter condition is equivalent to $\operatorname{Tor}_1^{kQ}(N,M)=0$ for all  right modules $N$, which is again equivalent to $M$ being flat. But since $M$ is finite-dimensional, being flat and being projective is the same. This proves the result.
	\end{proof}
	
	Now let $\Lambda$ be a finite-dimensional algebra over $k$. We have a canonical morphism of algebras
	\begin{align*}
		\phi\colon \Lambda \to \Lambda Q, \quad \quad \lambda \mapsto \sum_{\mathtt{i}\in Q_0}\lambda e_\mathtt{i}
	\end{align*}
	where $e_\mathtt{i}$ denotes the trivial path at vertex $\mathtt{i}$. The inclusion $k\to \Lambda$ also induces a morphism of algebras $\psi\colon kQ\to \Lambda Q$. Using $\phi$ and $\psi$ we get an isomorphism of algebras
	\[
	\Lambda\otimes_{k}kQ\xrightarrow{\cong} \Lambda Q \quad \quad \lambda\otimes p\mapsto \phi(\lambda)\cdot \psi(p). 
	\]
	where the multiplication in $\Lambda\otimes_{k}kQ$ is given by $(\lambda_1\otimes p)\cdot (\lambda_2\otimes q)=\lambda_1\cdot \lambda_2\otimes p\cdot q$. A module over $\Lambda\otimes_k kQ$ is the same as a $k$-vector space $M$ with a left action of $\Lambda$ and a left action of $kQ$ making it into a left $\Lambda$-module and a left $kQ$-module, respectively, and such that the two actions commute, i.e. 
	\[
	\lambda\cdot p\cdot m=p\cdot \lambda\cdot m
	\]
	for $\lambda\in \Lambda$ and $p\in kQ$ and $m\in M$. Note that this is equivalent to $M$ being a $(kQ,\Lambda^{\operatorname{op}})$-bimodule, where $\Lambda^{\operatorname{op}}$ denotes the opposite ring of $\Lambda$. For a module $M$ over $\Lambda\otimes_{k}kQ\cong \Lambda Q$ we let ${}_{kQ}|M$ denote its restriction to $kQ$. Note that $M\mapsto {}_{kQ}|M$ induces an exact functor
	\[
	\operatorname{mod}\Lambda Q\to \operatorname{mod}k Q.
	\]
	The preimage of the projective modules are precisely the monomorphic representations.
	\begin{Corollary}\label{Corollary:MonomorphismOverAlgOverField}
		Let $\Lambda$ be a finite-dimensional $k$-algebra, and let $M$ be a finite-dimensional left $\Lambda Q$-module. Then $M$ is contained in $\operatorname{mono}(Q,\Lambda)$ if and only if ${}_{kQ}|M$ is projective.
	\end{Corollary}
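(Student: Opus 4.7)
The plan is to reduce this immediately to Proposition \ref{Proposition:MonomorphismOverField} by observing that the monomorphism condition depends only on the underlying $kQ$-structure and not on the $\Lambda$-action.

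First I would unwind what it means for $M$ to belong to $\operatorname{mono}(Q,\Lambda)$. Under the equivalence $\operatorname{mod}\Lambda Q\simeq \operatorname{rep}(Q,\operatorname{mod}\Lambda)$, the module $M$ corresponds to a tuple $(M_{\mathtt{i}},h_\alpha)$, and the condition is that each assembled map
\[
h_{\mathtt{i},\operatorname{in}}\colon \bigoplus_{\substack{\alpha\in Q_1\\ t(\alpha)=\mathtt{i}}}M_{s(\alpha)}\longrightarrow M_{\mathtt{i}}
\]
is injective as a morphism of $\Lambda$-modules. The crucial point is that a $\Lambda$-linear map is injective if and only if it is injective as a map of $k$-vector spaces, i.e.\ if and only if it is injective as a morphism of $kQ$-modules once one forgets the $\Lambda$-action. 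Moreover, under the isomorphism $\Lambda Q\cong \Lambda\otimes_k kQ$ the representation of $Q$ attached to ${}_{kQ}|M$ has exactly the same vertex spaces $M_{\mathtt{i}}$ and the same structural maps $h_\alpha$, because these are read off from the action of the paths in $Q$, which commutes with $\phi$.

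From this I conclude that $M\in \operatorname{mono}(Q,\Lambda)$ if and only if ${}_{kQ}|M\in \operatorname{mono}(Q,k)$. Applying Proposition \ref{Proposition:MonomorphismOverField} then identifies the right-hand side with the subcategory of finite-dimensional projective $kQ$-modules, which yields the desired equivalence.

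The main (very mild) obstacle is simply bookkeeping: one must verify that the maps $h_{\mathtt{i},\operatorname{in}}$ featuring in the definition of the monomorphism category are indeed intrinsic to the $kQ$-module structure, and that injectivity for $\Lambda$-linear maps and for $k$-linear maps coincide. Both are immediate, so once these identifications are made explicit, the result follows from Proposition \ref{Proposition:MonomorphismOverField} without further work.
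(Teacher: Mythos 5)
Your proof is correct and takes essentially the same route as the paper: observe that $M\in\operatorname{mono}(Q,\Lambda)$ if and only if ${}_{kQ}|M\in\operatorname{mono}(Q,k)$, then invoke Proposition \ref{Proposition:MonomorphismOverField}. The paper states this one-line reduction without spelling out the bookkeeping you make explicit, but the underlying argument is identical.
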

	
	\begin{proof}
		Note that $M$ is in $\operatorname{mono}(Q,\Lambda)$ if and only if ${}_{kQ}|M$ is in $\operatorname{mono}(Q,k)$. The claim follows now from Proposition \ref{Proposition:MonomorphismOverField}.
	\end{proof}
	
	We can use Corollary \ref{Corollary:MonomorphismOverAlgOverField} and the theory of torsion-free classes to give a short proof of the functorially finiteness of $\operatorname{mono}(Q,\Lambda)$.
	
	\begin{proof}[Proof of Theorem \ref{Lemma:MonoExtClosedFunctFinite} for finite-dimensional algebras.] 
		
		Since the functor $M\mapsto {}_{kQ}|M$ is exact and projective modules are closed under extensions, it follows that $\operatorname{mono}(Q,\Lambda)$ is closed under extensions. Also, since $kQ$ is hereditary, the projective $kQ$-modules are closed under subobjects, so the same holds for $\operatorname{mono}(Q,\Lambda)$. Hence, $\operatorname{mono}(Q,\Lambda)$ is a torsion-free class,  i.e. there exists a torsion pair $(\mathcal{T},\mathcal{F})$ where $\mathcal{F}=\operatorname{mono}(Q,\Lambda)$. By definition of torsion pairs, for any $\Lambda Q$-module $M$ there exists an exact sequence $$0\to t(M)\to M\to f(M)\to 0$$ where $t(M)\in \mathcal{T}$ and $f(M)\in \mathcal{F}$. Since $\operatorname{Hom}_{\Lambda Q}(t(M),F)=0$ for any $F\in \mathcal{F}$, it follows that $M\to f(M)$ is a left $\operatorname{mono}(Q,\Lambda)$-approximation. 
		
		It remains to show that any $\Lambda Q$-module has a right $\operatorname{mono}(Q,\Lambda)$-approximation. By \cite[Proposition 4.7 (b)]{AS80} it suffices to show that there exists $N\in \operatorname{mono}(Q,\Lambda)$ such that  $\operatorname{mono}(Q,\Lambda)=\operatorname{sub}N$, where 
  \[
  \operatorname{sub}N\colonequals\{M\in \operatorname{mod}\Lambda Q\mid \text{there exists a monomorphism } M\to N^m \text{ for some } m>0\}.
  \]
  We claim that this holds for $N=D(\Lambda)\otimes kQ$ where $D(\Lambda)=\operatorname{Hom}_k(\Lambda,k)$. Indeed, it is clear that $D(\Lambda)\otimes kQ\in \operatorname{mono}(Q,\Lambda)$ since 
		\[
		{}_{kQ}|D(\Lambda)\otimes kQ\cong kQ^{n}
		\]
		which is projective, where $n$ is the dimension of $D(\Lambda)$. Now let $M\in \operatorname{mono}(Q,\Lambda)$. Then the restriction ${}_{kQ}|M$ is projective, and in particular there exists an inclusion of $kQ$-modules $i\colon {}_{kQ}|M\to kQ^m$  for some integer $m$. Next note that ${}_{kQ}|(-)\colon \operatorname{mod}\Lambda Q\to \operatorname{mod}kQ$ has right adjoint
		\[
		\operatorname{Hom}_k(\Lambda,-)\colon \operatorname{mod}kQ\to \operatorname{mod}\Lambda Q
		\]
		Hence $i$ corresponds to a morphism of $\Lambda Q$-modules $\tilde{i}\colon M\to \operatorname{Hom}_k(\Lambda,kQ^m)$  given by $\tilde{i}(m)(\lambda)=i(\lambda\cdot m)$. Clearly, $i$ being a monomorphism implies $\tilde{i}$ is a monomorphism. Since we have canonical isomorphisms
		\[
		\operatorname{Hom}_k(\Lambda,kQ^m)\cong D(\Lambda)\otimes_k (kQ^m)\cong (D(\Lambda)\otimes_k kQ)^m
		\]
		of $\Lambda Q$ modules, the claim follows.
	\end{proof}
	
	\begin{Remark}
		The characterization in Corollary \ref{Corollary:MonomorphismOverAlgOverField} does not hold for an arbitrary Artin algebra $\Lambda$. Indeed, if $Q$ is the quiver with one vertex and no arrows and $\Lambda=R$ is a commutative artinian ring, then the monomorphism category must be equal to $\operatorname{mod}R$ itself. This coincides with the category of finitely generated projective $R$-modules if and only if $R$ is semisimple, which for example is not true for $R=\mathbb{Z}/(p^n)$. 
		
		On the other hand, the proof of Theorem \ref{Lemma:MonoExtClosedFunctFinite} above can be adapted to work for any Artin $R$-algebra $\Lambda$. Indeed, it is still true that for any $M\in \operatorname{mono}(Q,\Lambda)$ there exists a monomorphism $M\to N^m$ for some integer $m$, where $$N=\operatorname{Hom}_R(\Lambda,E)\otimes_R RQ$$ and $E$ is an injective cogenerator in $\operatorname{mod}R$. However, to show this we can no longer rely on Corollary \ref{Corollary:MonomorphismOverAlgOverField}. We sketch two different ways of proving this:
  \begin{enumerate}
      \item One can use \cite[Proposition 4.9 (ii)]{Kva20a} to obtain that the monomorphism category is equal to the category of Gorenstein $P$-projective objects. Now any Gorenstein $P$-projective object is a subobject of a $P$-projective object by definition, see \cite[Definition 2.10]{Kva20a}. In our case the $P$-projective objects are precisely summands of objects of the form $M\otimes_R RQ$, where $M$ is a finitely generated $R$-module $M$. Now $\operatorname{Hom}_R(\Lambda,E)$ being a cogenerator of $\operatorname{mod}R$ implies that $\operatorname{Hom}_R(\Lambda,E)\otimes_R RQ$ is a cogenerator of the $P$-projective objects, which proves the claim.
      \item Consider the ring $S=\prod_{\mathtt{i}\in Q_0}\Lambda$ and the relative top functor
      \[
      \operatorname{top}_Q\colon \operatorname{mod}\Lambda Q\to \operatorname{mod}S \cong \prod_{\mathtt{i}\in Q_0}\operatorname{mod}\Lambda
      \]
      given by
      \[
      \operatorname{top}_Q(M)=(S_\mathtt{i}\otimes_{\Lambda Q}M)_{\mathtt{i}\in Q_0}
      \]
      where $S_i$ is the $(\Lambda Q)^{\operatorname{op}}=\Lambda^{\operatorname{op}}Q^{\operatorname{op}}$-module given by $\Lambda$ at vertex $\mathtt{i}$ and $0$ elsewhere. For each $\mathtt{i}\in Q_0$ choose an inclusion of $\Lambda$-modules
      \[
S_\mathtt{i}\otimes_{\Lambda Q}M\xrightarrow{g_\mathtt{i}}\operatorname{Hom}_R(\Lambda,E)^{n_{\mathtt{i}}}.
      \]
    for some integer $n_\mathtt{i}$.
      By \cite[Lemma 4.1]{GKKP23} we can find a morphism $$g\colon M\to \bigoplus_{\mathtt{i}\in Q_0}\operatorname{Hom}_R(\Lambda,E)^{n_\mathtt{i}}\otimes_R RQ\cong N^m$$
      of $\Lambda Q$-modules satisfying $\operatorname{top}_Q(g)=(g_\mathtt{i})_{\mathtt{i}\in Q_0}$, where $m=\sum_{\mathtt{i}\in Q_0}n_\mathtt{i}$. Since $\operatorname{top}_Q(g)$ is a monomorphism and $M$ is in the monomorphism category, $g$ must be a monomorphism by \cite[Lemma 3.24]{GKKP23}. This proves the claim.
  \end{enumerate}
	\end{Remark}

 \subsection{Previous work}\label{Subsection:PreviousWork}

 Here we summarize some of the work on the representation theory of monomorphism categories.

\subsubsection*{Finite representation type}
 The finite and infinite representation type of $$\operatorname{mono}(\mathtt{1}\to \mathtt{2}\to \cdots \to \mathtt{m},\Lambda)$$ for a commutative uniserial ring $\Lambda$  was determined in \cite{Pla76} (see also \cite[Theorem 1.3]{Sim02}). The classification only depends on $m$ and the length of $\Lambda$, and not the isomorphism class of $\Lambda$.  In \cite[Theorem 4.6]{Lu20} this classification was extended to any finite acyclic quiver when $\Lambda$ was equal to $k[x]/(x^n)$ where $k$ is an algebraically closed field. We give an analogous result for $\Lambda=\mathbb{Z}/(p^n)$ in Theorem \ref{Theorem:RepTypeUniserial} below. A similar classification as in \cite{Pla76} was given in \cite[Corollary 1.6]{BBOS20} for the representation type of $\operatorname{mono}(\mathtt{1}\to \mathtt{2}\to \cdots \to \mathtt{m},k(\mathtt{1}\to \mathtt{2}\to \cdots \to \mathtt{n}))$.

    For the submodule category $\operatorname{sub}(\Lambda)=\operatorname{mono}(\mathtt{1}\to \mathtt{2},\Lambda)$ there are classification results when $\Lambda$ is the path algebra of a quiver \cite[Theorem 3.25]{BBOS20}, and when $\Lambda$ is a selfinjective algebra of finite representation type over an algebraically closed field of characteristic $\neq 2$ \cite[Theorem 7.8]{HM20}. In the case of a commutative artinian uniserial ring $\Lambda$, the classification can be refined by considering the subcategory $\operatorname{sub}_i(\Lambda)$ of $\operatorname{sub}(\Lambda)$ of all monomorphisms $M_1\to M_2$ for which $M_1$ is annihilated by $\mathfrak{m}^i$ for some integer $i\geq 0$, where $\mathfrak{m}$ is the maximal ideal of $\Lambda$. We refer to \cite{Sch05,Sch08,PS11a} for work in this direction.

\subsubsection*{Tame-wild classification}
    By \cite[Theorems 1.4 and 1.5]{Sim02} there is a tame-wild dichotomy for $\operatorname{mono}(\mathtt{1}\to \mathtt{2}\to \cdots \to \mathtt{m},k[x]/(x^n))$ when $k$ is algebraically closed.\footnote{There is a small error in  \cite[Theorems 1.4 and 1.5]{Sim02}, see Theorem \ref{Theorem:RefereeWild}.} For $\mathtt{m}=2$ the assumption on the field being algebraically closed can be dropped by \cite[Proposition 3]{RS06} and the classification result in \cite{RS08b}. Similarly, by \cite[Corollary 1.6]{BBOS20} there is a tame-wild dichotomy for $\operatorname{mono}(\mathtt{1}\to \mathtt{2}\to \cdots \to \mathtt{m},k(\mathtt{1}\to \mathtt{2}\to \cdots \to \mathtt{n}))$. In fact, in both \cite{Sim02} and \cite{BBOS20} they determine precisely for which pairs of integers $(\mathtt{m},n)$ the category is tame or wild. 
    
    A challenging problem is to determine tameness and wildness for monomorphic representations over $\mathbb{Z}/p^n\mathbb{Z}$. By \cite{Pla76} the category $\operatorname{sub}(\mathbb{Z}/(p^n))$ is of finite representation type if and only if $n\leq 5$. The question of whether $\operatorname{sub}(\mathbb{Z}/(p^6))$ is tame is open. It is related to the Birkhoff problem, see the paragraph on computations below. For $n\geq 7$, it was shown in \cite{RS06} that $\operatorname{sub}(\mathbb{Z}/(p^n)))$ is controlled $\mathbb{F}_p$-wild, where $\mathbb{F}_p=\mathbb{Z}/(p)$ is the finite field with $p$ elements. Note that a version of wildness for certain subcategories of $\operatorname{sub}(\mathbb{Z}/(p^n))$ had already been shown in \cite[Chapter 12]{Arn00}, see for example \cite[Example 8.2.6]{Arn00}.
    
\subsubsection*{Structural results}
   For $\Lambda$ commutative and uniserial the period of the Auslander--Reiten translation for a non-injective indecomposable module in $\operatorname{sub}(\Lambda)$ is of order $1$, $2$, $3$, or $6$ \cite[Corollary 6.5]{RS08}.  This is proven using a simple formula for the Auslander--Reiten translation in $\operatorname{sub}(\Lambda)$, which holds for any Artin algebra $\Lambda$  \cite[Theorem 5.1]{RS08}. The formula is extended to $\operatorname{mono}(\mathtt{1}\to \mathtt{2}\to \cdots \to \mathtt{m},\Lambda)$ in \cite{XZZ14}, where they apply it to show that for $\Lambda$ commutative and uniserial the period of the AR-translation divides $m+1$ if $m$ is odd, and $2(m+1)$ if $m$ is even. A similar result holds for connected cyclic selfinjective Nakayama algebra \cite[Corollary 3.6]{XZZ14}. 
    
 If $\Lambda$ is selfinjective over an algebraically closed field and $\operatorname{sub}(\Lambda)$ is of finite representation type, then they show in \cite[Theorem 6.2]{HZ23} that the cardinality of a component of the stable AR-quiver must be divisible by $3$, as long as the tree class of the component is not $\mathbb{D}_4$.  A generalization of this to $\operatorname{mono}(\mathtt{1}\to \mathtt{2}\to \cdots \to \mathtt{m},\Lambda)$ is given in \cite[Theorem 5.7]{HB24}, under the additional assumption that the stable Auslander algebra of $\Lambda$ is semisimple. Furthermore, in \cite{HZ23} they show that a special class of indecomposable objects in $\operatorname{sub}(\Lambda)$, called boundary vertices, gives a lot of information on the stable Auslander--Reiten quiver of $\operatorname{sub}(\Lambda)$. See for example Theorem 4.12, Proposition 4.15, Proposition 5.4 and Theorem 5.7 in \cite{HZ23}. Note that all vertices of the stable AR-quiver are boundary if the stable Auslander algebra of $\Lambda$ is semisimple \cite[Definition 3.1 and Proposition 3.7]{HB24}. For such algebras there is a complete description of the stable AR-quiver of $\operatorname{sub}(\Lambda)$ by \cite[Theorem 5.8]{HB24}.

\subsubsection*{Computations}
By \cite{Pla76} (see also Theorem \ref{Theorem:RepTypeUniserial}) the category $$\operatorname{mono}(\mathtt{1}\to \mathtt{2}\to \cdots \to \mathtt{m},k([x]/(x^n))$$ is representation finite if $n=2$ or $(n,\mathtt{m})\in \{(3,2),(4,2),(5,2),(3,3),(3,4)\}$. The Auslander--Reiten quiver has been computed in these cases. We refer to \cite[Section 6]{RS08b} for the case $(n,\mathtt{m})\in \{(3,2),(4,2),(5,2)\}$, to \cite[Theorem 3.2]{Moo09} for the case $(n,\mathtt{m})=(3,3)$ (alternatively see \cite[Appendix 6.1(ii)]{XZZ14}), and to \cite[Appendix 6.1(iii)]{XZZ14} for the case $(n,\mathtt{m})=(3,4)$. The case $n=2$ can be computed using the results for perfect differential $kQ$-modules in \cite[Theorems 1.1 and 1.3]{RZ17}. To see this, note that for a finite acyclic quiver $Q$ the perfect differential $kQ$-modules in \cite{RZ17} are by definition the $k[x]/(x^2)\otimes_k kQ$-bimodules which are projective as $kQ$-modules. Hence by Corollary \ref{Corollary:MonomorphismOverAlgOverField} they coincide with the monomorphic representations. More generally, the results in \cite{RZ17} can be used to compute $\operatorname{mono}(Q,k[x]/(x^2))$ for any finite acyclic quiver $Q$. Finally, in \cite[Sections 5.1 and 5.3]{LS22} they compute the AR-quiver of $\operatorname{mono}(Q,k[x]/(x^2))$ and $\operatorname{mono}(Q,k[x]/(x^3))$, respectively, for $Q$ any orientation of the $\mathbb{A}_3$-quiver.

By \cite[Theorem 1.4]{Sim02} the category $\operatorname{mono}(\mathtt{1}\to \mathtt{2}\to \cdots \to \mathtt{m},k([x]/(x^n))$ is tame if and only if $(n,\mathtt{m})\in \{(6,2),(4,3),(3,5)\}$\footnote{We were kindly informed by the referee that the pair $(4,4)$ is not tame, contradictory to \cite[Theorem 1.4]{Sim02}. For a proof see Theorem \ref{Theorem:RefereeWild}.}. For $(n,\mathtt{m})=(6,2)$ the AR-quiver was computed in \cite{RS08b}, and further properties were investigated in \cite{DMS19,RS24}. For $(n,\mathtt{m})=(4,3)$ the AR-quiver was computed in \cite[Section 3]{Moo09}, and the possible dimension types were studied in \cite{MS15}. To our knowledge, the AR-quiver for $(n,\mathtt{m})=(3,5)$ has not been computed yet.

Less is known for monomorphic representations over the cyclic abelian group $\mathbb{Z}/(p^n)$ for some integer $n>0$. Most computations have been for the submodule category $\operatorname{sub}(\mathbb{Z}/(p^n))$. In fact, already in \cite{Bir35} a $1$-parameter family in $\operatorname{sub}(\mathbb{Z}/(p^6))$ of indecomposables $(f_x\colon M_1\xrightarrow{}M_2)$ depending on $p$ was constructed (see also \cite[Section 1.1]{Sch05}). The isomorphism class of $M_1$ and $M_2$ is constant in the family, and the number of indecomposables in the family goes to infinity as $p$ goes to infinity. This reflects the fact that $\operatorname{sub}(\mathbb{Z}/(p^6))$ is of infinite representation type, as shown in \cite{Pla76}. Determining the indecomposables and the AR-quiver of $\operatorname{sub}(\mathbb{Z}/(p^6))$ is still an open problem, called the Birkhoff problem after \cite{Bir35}.

The indecomposables of $\operatorname{sub}(\mathbb{Z}/(p^n))$ were computed in \cite{RW79} for $n\leq 4$, and for $n=5$ in \cite{RW99}. These are the representation finite cases. The category $\operatorname{sub}(\mathbb{Z}/(p^n))$ can also be determined from certain tiled orders over the $p$-adic integers, see \cite[Section 2]{Rum81}. Indecomposables for tiled orders were studied in \cite{Rum86,Rum90}.

\section{An epivalence and the Mimo construction}\label{Section:An epivalence and the Mimo construction}

Here we explain the construction of the epivalence from \cite{GKKP23}, and how the Mimo-construction introduced by Ringel and Schmidmeier gives an inverse to it. We also recall the application to commutative uniserial rings of length $3$ given in \cite{GKKP23}. Finally, we show how this result can be used to give a classification of finite and infinite representation type of monomorphism categories over $\mathbb{Z}/(p^n)$. 

\subsection{The epivalence}

Let $Q$ be a finite acyclic quiver and $\Lambda$ an Artin algebra. We first start by characterizing the \textit{injective} objects in $\operatorname{mono}(Q,\Lambda)$, i.e. the representations $M\in \operatorname{mono}(Q,\Lambda)$ satisfying  $\operatorname{Ext}^1_{\Lambda Q}(N,M)=0$ for all $N\in \operatorname{mono}(Q,\Lambda)$. To this end, we need to consider the restriction functor
\[
f^*\colon \operatorname{rep}(Q,\operatorname{mod}\Lambda)\to \prod_{\mathtt{i}\in Q_0}\operatorname{mod}\Lambda, \quad \quad (M_\mathtt{i},h_\alpha)_{\mathtt{i}\in Q_0,\alpha\in Q_1}\mapsto (M_\mathtt{i})_{\mathtt{i}\in Q_0}
\]
and its left adjoint
\[
f_!\colon \prod_{\mathtt{i}\in Q_0}\operatorname{mod}\Lambda\to \operatorname{rep}(Q,\operatorname{mod}\Lambda).
\]
For $\mathtt{i}\in Q_0$ let  $M(\mathtt{i})$ be the object in $\prod_{\mathtt{i}\in Q_0}\operatorname{mod}\Lambda$ given by $$M(\mathtt{i})_\mathtt{j}=
	\begin{cases}
	M, & \text{if}\ \mathtt{j}=\mathtt{i} \\
	0, & \text{if}\ \mathtt{j}\neq\mathtt{i}.
	\end{cases}$$
Applying $f_!$ to $M(\mathtt{i})$ gives the representation
\begin{equation*}
	f_!(M(\mathtt{i}))_\mathtt{k}=\bigoplus_{\substack{p\in Q_{\geq 0}\\ s(p)=\mathtt{i},t(p)=\mathtt{k}}}M \quad \text{and} \quad f_!(M(\mathtt{i}))_\alpha\colon \bigoplus_{\substack{p\in Q_{\geq 0}\\ s(p)=\mathtt{i},t(p)=\mathtt{k}}}M \to \bigoplus_{\substack{p\in Q_{\geq 0}\\ s(p)=\mathtt{i},t(p)=\mathtt{l}}}M
	\end{equation*}
 for a vertex $\mathtt{k}\in Q_0$ and an arrow $\alpha\colon \mathtt{k}\to \mathtt{l}$. Here $Q_{\geq 0}$ denotes the set of paths in $Q$. The map $f_!(M(\mathtt{i}))_\alpha$ is induced by the identity maps $M\xrightarrow{1} M$ between the components indexed by paths $p$ and $\alpha p$.
 
\begin{Proposition}\label{Proposition:InjectivesMono}
The following hold:
\begin{enumerate}
    \item\label{Proposition:InjectivesMono:1} A representation $M=(M_\mathtt{i},h_\alpha)\in \operatorname{mono}(Q,\Lambda)$ is injective if and only if $M_\mathtt{i}$ is an injective $\Lambda$-module for all $\mathtt{i}\in Q_0$.
    \item\label{Proposition:InjectivesMono:2} A representation $M=(M_\mathtt{i},h_\alpha)\in \operatorname{mono}(Q,\Lambda)$ is indecomposable injective if and only if it is isomorphic to a representation of the form $f_!(J(\mathtt{i}))$ where $\mathtt{i}\in Q_0$ and $J$ is an indecomposable injective $\Lambda$-module.
\end{enumerate}
\end{Proposition}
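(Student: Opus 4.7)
Both functors in the adjunction $f_!\dashv f^*$ are exact --- the right adjoint is simply restriction, and for the left adjoint we have the explicit description $f_!(X)_\mathtt{k} = \bigoplus_{\mathtt{i}\in Q_0}\bigoplus_{p\colon \mathtt{i}\to\mathtt{k}} X_\mathtt{i}$. Hence there is a derived adjunction
\[
\Ext^n_{\Lambda Q}(f_!(X), Y) \;\cong\; \prod_{\mathtt{i}\in Q_0}\Ext^n_\Lambda(X_\mathtt{i}, Y_\mathtt{i}), \qquad n\geq 0,
\]
and furthermore $f_!(X)\in \operatorname{mono}(Q,\Lambda)$ for every $X$, since at each vertex $\mathtt{k}$ the arrow-in map identifies with the identity along the injection $(\alpha,\mathtt{i},p)\mapsto (\mathtt{i},\alpha p)$ of path-indexing sets. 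These two observations drive everything.

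For the ``only if'' part of \ref{Proposition:InjectivesMono:1}, suppose $M\in\operatorname{mono}(Q,\Lambda)$ is injective. For any $\Lambda$-module $L$ and any vertex $\mathtt{i}$ the representation $f_!(L(\mathtt{i}))$ lies in $\operatorname{mono}(Q,\Lambda)$, so injectivity of $M$ together with the derived adjunction gives
\[
\Ext^1_\Lambda(L, M_\mathtt{i}) \;\cong\; \Ext^1_{\Lambda Q}(f_!(L(\mathtt{i})), M) \;=\; 0,
\]
forcing $M_\mathtt{i}$ to be $\Lambda$-injective. For ``if'', I will split any short exact sequence $0\to M\to E\to N\to 0$ in $\operatorname{mod}\Lambda Q$ with $N\in\operatorname{mono}(Q,\Lambda)$ by building a retraction $r\colon E\to M$ inductively along a topological ordering of $Q_0$. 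At vertex $\mathtt{k}$, the previously constructed retractions $r_{s(\alpha)}$ prescribe a map on the submodule $h^E_{\mathtt{k},\operatorname{in}}(\bigoplus_\alpha E_{s(\alpha)})\subseteq E_\mathtt{k}$, the identity prescribes a map on $M_\mathtt{k}$, and injectivity of $M_\mathtt{k}$ extends the combined map to all of $E_\mathtt{k}$. The main obstacle is verifying consistency of the two prescriptions on the overlap, which reduces to the identity $(h^E_{\mathtt{k},\operatorname{in}})^{-1}(M_\mathtt{k})=\bigoplus_\alpha M_{s(\alpha)}$; this holds because $N=E/M\in\operatorname{mono}(Q,\Lambda)$ forces the arrow-in map $h^N_{\mathtt{k},\operatorname{in}}$ to be injective, so any element of $\bigoplus_\alpha E_{s(\alpha)}$ whose image in $E_\mathtt{k}$ lies in $M_\mathtt{k}$ already has all components in $M$.

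For \ref{Proposition:InjectivesMono:2}, one direction is immediate from \ref{Proposition:InjectivesMono:1}: each component of $f_!(J(\mathtt{i}))$ is a direct sum of copies of $J$ and hence $\Lambda$-injective, while the adjunction yields $\End_{\Lambda Q}(f_!(J(\mathtt{i})))\cong\End_\Lambda(J)$, which is local if $J$ is indecomposable injective. Conversely, if $M$ is indecomposable injective in $\operatorname{mono}(Q,\Lambda)$, each $M_\mathtt{i}$ is $\Lambda$-injective by \ref{Proposition:InjectivesMono:1}, so I decompose $M_\mathtt{i}=\bigoplus_t J_{\mathtt{i},t}$ into indecomposable injective summands. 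The counit $f_!(f^*(M))\to M$ is a surjection (the trivial path at $\mathtt{k}$ gives the identity on $M_\mathtt{k}$) whose kernel $K$ lies in $\operatorname{mono}(Q,\Lambda)$, and has each $K_\mathtt{k}$ a direct summand of the injective $\Lambda$-module $f_!(f^*(M))_\mathtt{k}$, hence itself injective. Applying \ref{Proposition:InjectivesMono:1} to $K$ shows $K$ is injective in $\operatorname{mono}(Q,\Lambda)$, so the sequence $0\to K\to f_!(f^*(M))\to M\to 0$ splits. Therefore $M$ is a direct summand of $\bigoplus_{\mathtt{i},t} f_!(J_{\mathtt{i},t}(\mathtt{i}))$, and Krull--Schmidt in $\operatorname{mod}\Lambda Q$ identifies $M$ with one of these indecomposable summands.
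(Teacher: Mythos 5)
The paper gives no argument of its own for \cref{Proposition:InjectivesMono} --- it simply cites \cite[Proposition 2.4]{LZ13} and \cite[Corollary 3.8 and Section 4]{GKKP23} --- so there is no in-text proof to compare against. Your proposal is a correct, self-contained proof, and the high-level strategy (use the exact adjunction $f_!\dashv f^*$ and the explicit path-indexed description of $f_!$) is the standard one underlying the cited results.

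Two places where you are slightly terse but everything does check out. First, in the ``if'' direction of \ref{Proposition:InjectivesMono:1}, the prescription for $r_\mathtt{k}$ on the submodule $h^E_{\mathtt{k},\operatorname{in}}(\bigoplus_\alpha E_{s(\alpha)})$ must in particular be well-defined (not only compatible with the identity on $M_\mathtt{k}$); this follows because your identity $(h^E_{\mathtt{k},\operatorname{in}})^{-1}(M_\mathtt{k})=\bigoplus_\alpha M_{s(\alpha)}$, combined with injectivity of $h^M_{\mathtt{k},\operatorname{in}}$, shows $h^E_{\mathtt{k},\operatorname{in}}$ is itself injective --- equivalently, one can invoke \cref{Lemma:MonoExtClosedFunctFinite} to conclude $E\in\operatorname{mono}(Q,\Lambda)$. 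Second, in \ref{Proposition:InjectivesMono:2}, the reason each $K_\mathtt{k}$ is a direct summand of the injective $\Lambda$-module $f_!(f^*M)_\mathtt{k}$ is that the counit $f_!(f^*M)_\mathtt{k}\to M_\mathtt{k}$ has an obvious $\Lambda$-linear section given by the trivial-path component, so the componentwise short exact sequence splits. With these small amplifications your argument is complete and correct.
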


\begin{proof}
  See \cite[Proposition 2.4]{LZ13} or \cite[Corollary 3.8 and Section 4]{GKKP23}.
\end{proof}

Let $\overline{\operatorname{mono}}(Q,\Lambda)$ denote the injectively stable category of $\operatorname{mono}(Q,\Lambda)$. Explicitly, it has the same objects as $\operatorname{mono}(Q,\Lambda)$, and has morphism spaces
\[
\operatorname{Hom}_{\overline{\operatorname{mono}}(Q,\Lambda)}(M,N)= \operatorname{Hom}_{\operatorname{mono}(Q,\Lambda)}(M,N)/I(M,N)
\]
where $I(M,N)$ denotes the abelian groups of morphisms factoring through an injective object in $\operatorname{mono}(Q,\Lambda)$. The injectively stable category $\overline{\operatorname{mod}}\,\Lambda$ of $\operatorname{mod}\Lambda$ is defined similarly, using the injective $\Lambda$-modules. Finally, consider the category $\operatorname{rep}(Q,\overline{\operatorname{mod}}\,\Lambda)$ of representations of $Q$ in $\overline{\operatorname{mod}}\,\Lambda$. It is defined similarly as $\operatorname{rep}(Q,\operatorname{mod}\Lambda)$, but with components and morphisms in $\overline{\operatorname{mod}}\,\Lambda$ instead.

\begin{Definition}
    	An \textit{epivalence} is a full and dense functor which reflects isomorphisms (i.e. if $F(f)$ is an isomorphism, then $f$ must be an isomorphism).
\end{Definition}

\begin{Theorem}\label{Theorem:Epivalence}
    The composite
	\[
	\operatorname{mono}(Q,\Lambda)\to \operatorname{rep}(Q,\operatorname{mod}\Lambda)\to \operatorname{rep}(Q,\overline{\operatorname{mod}}\,\Lambda)
	\]
	induces an epivalence
	\[
	\Phi\colon \overline{\operatorname{mono}}(Q,\Lambda)\to \operatorname{rep}(Q,\overline{\operatorname{mod}}\,\Lambda).
	\]
	Furthermore, if $Q$ has at least one arrow, then $\Phi$ is an equivalence if and only if $\Lambda$ has global dimension $\leq 1$.
\end{Theorem}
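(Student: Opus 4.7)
My plan is to verify $\Phi$ is an epivalence by checking well-definedness on $\overline{\operatorname{mono}}(Q,\Lambda)$, density, fullness, and reflection of isomorphisms, and then to analyze faithfulness to obtain the equivalence criterion. For well-definedness, Proposition~\ref{Proposition:InjectivesMono}\eqref{Proposition:InjectivesMono:1} identifies the injectives of $\operatorname{mono}(Q,\Lambda)$ as the representations with each component injective as a $\Lambda$-module; hence any morphism factoring through such a representation has each component factoring through an injective $\Lambda$-module and so vanishes in $\operatorname{rep}(Q,\overline{\operatorname{mod}}\,\Lambda)$, which makes $\Phi$ descend to the stable category. For density, given $(N_{\mathtt{i}},k_{\alpha})$ in $\operatorname{rep}(Q,\overline{\operatorname{mod}}\,\Lambda)$, I would first lift arbitrarily to a representation $(M_{\mathtt{i}},h_{\alpha})$ in $\operatorname{rep}(Q,\operatorname{mod}\Lambda)$, then apply the Mimo construction, enlarging each $M_{\mathtt{i}}$ by an injective $\Lambda$-module in a topological ordering of $Q$ so as to kill the kernel of each $h_{\mathtt{i},\operatorname{in}}$. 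Because the added summands are zero in $\overline{\operatorname{mod}}\,\Lambda$, the resulting monomorphic representation maps to $(N_{\mathtt{i}},k_{\alpha})$ under $\Phi$.

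For fullness, given $\varphi=(\varphi_{\mathtt{i}})\colon \Phi(M)\to\Phi(N)$, I would pick arbitrary $\Lambda$-linear lifts $\tilde\varphi_{\mathtt{i}}\colon M_{\mathtt{i}}\to N_{\mathtt{i}}$. For each arrow $\alpha\colon \mathtt{i}\to \mathtt{j}$ the defect
\[
\delta_{\alpha}=h'_{\alpha}\tilde\varphi_{\mathtt{i}}-\tilde\varphi_{\mathtt{j}}h_{\alpha}
\]
factors through some injective $\Lambda$-module. The idea is to correct the $\tilde\varphi_{\mathtt{j}}$ vertex by vertex along a topological ordering of $Q$, using the monomorphy of $h'_{\mathtt{j},\operatorname{in}}$ together with injectivity of the intermediate modules to absorb the defects without spoiling previously corrected squares. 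For reflection of isomorphisms, if $\Phi(f)$ is an isomorphism I would lift an inverse by fullness to some $g\colon N\to M$ and note that $gf-1_M$ and $fg-1_N$ are componentwise zero in $\overline{\operatorname{mod}}\,\Lambda$; combining this with the fact that $\overline{\operatorname{mono}}(Q,\Lambda)$ is Krull--Schmidt over the Artin ring $R$, this is enough to force $f$ to be invertible in $\overline{\operatorname{mono}}(Q,\Lambda)$.

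For the equivalence criterion, $\Phi$ is an equivalence if and only if it is faithful, so I would analyze when every $f$ with $\Phi(f)=0$ actually factors through an injective of $\operatorname{mono}(Q,\Lambda)$. When $\operatorname{gl.dim}\Lambda\leq 1$ every submodule of an injective $\Lambda$-module is injective, and this allows one to sharpen the correction procedure above and eliminate the defects entirely, yielding the desired factorization and thus faithfulness. Conversely, assuming $\operatorname{gl.dim}\Lambda\geq 2$ and that $Q$ has an arrow $\alpha\colon \mathtt{i}\to \mathtt{j}$, I would select a short exact sequence $0\to X\to I\to Y\to 0$ with $I$ injective and $Y$ non-injective and build a morphism between monomorphic representations supported on $\alpha$ that is sent to zero by $\Phi$ but does not factor through an injective of $\operatorname{mono}(Q,\Lambda)$. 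The main obstacle I anticipate is the correction step for fullness: moving corrections along the quiver while preserving commutativity of previously adjusted squares is where the interplay between monomorphy of structure maps and injectivity of $\Lambda$-modules is most delicate, and controlling this is also what underlies both the reflection of isomorphisms and the sharp equivalence criterion.
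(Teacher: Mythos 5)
The paper's own ``proof'' of this theorem is just a citation to \cite[Theorem A]{GKKP23}, so you are attempting a genuine direct argument where the paper defers to an external reference. Your outline for well-definedness, density, and reflection of isomorphisms is sound: factoring through an injective of $\operatorname{mono}(Q,\Lambda)$ is, by Proposition~\ref{Proposition:InjectivesMono}, the same as having each component factor through an injective $\Lambda$-module; the Mimo construction adds only injective summands and so does not change the image in $\operatorname{rep}(Q,\overline{\operatorname{mod}}\,\Lambda)$; and the Krull--Schmidt reduction is standard. The fullness correction also works, but note one slip: to absorb the defect $(\delta_\alpha)_{t(\alpha)=\mathtt{j}}\colon \bigoplus M_{s(\alpha)}\to N_{\mathtt{j}}$, which factors through an injective $I$, you must extend the map $\bigoplus M_{s(\alpha)}\to I$ along the monomorphism $h_{\mathtt{j},\operatorname{in}}\colon \bigoplus M_{s(\alpha)}\to M_{\mathtt{j}}$ on the \emph{source} side; injectivity of $I$ then gives $\eta\colon M_{\mathtt{j}}\to I$ and the correction $\epsilon_{\mathtt{j}}=\beta\eta$. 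So the relevant structure map is $h_{\mathtt{j},\operatorname{in}}$, not $h'_{\mathtt{j},\operatorname{in}}$ as you wrote. With that fix, processing vertices in topological order works as you describe.

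The genuine gap is in the equivalence criterion. Your claim that ``when $\operatorname{gl.dim}\Lambda\leq 1$ every submodule of an injective $\Lambda$-module is injective'' is false: that property would force every module (each of which embeds into an injective) to be injective, i.e.\ $\Lambda$ semisimple. The correct consequence of $\operatorname{gl.dim}\Lambda\leq 1$ for an Artin algebra is that every \emph{quotient} of an injective is injective (equivalently, $\operatorname{id}M\leq 1$ for all $M$). Even with this repaired, your sketch of faithfulness is incomplete: given $f\colon M\to N$ with each $f_{\mathtt{i}}$ factoring through an injective $\Lambda$-module, you still need to assemble the intermediate injectives $I_{\mathtt{i}}$ into a \emph{monomorphic} representation and make both $\rho=(\rho_{\mathtt{i}})$ and $\sigma=(\sigma_{\mathtt{i}})$ into morphisms of representations simultaneously; choosing $I_{\mathtt{i}}=E(M_{\mathtt{i}})$ and extending along the envelope maps gives compatibility on the $\rho$ side, but $\sigma_{\mathtt{j}}g_\alpha$ and $h'_\alpha\sigma_{\mathtt{i}}$ are only forced to agree after precomposition with $\rho_{\mathtt{i}}$, and the assembled representation need not lie in $\operatorname{mono}(Q,\Lambda)$. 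This is precisely where the hereditary hypothesis has to enter, and your sketch neither uses it correctly nor fills in this step. The converse direction (producing a non-factoring morphism from a module of injective dimension $\geq 2$ when $Q$ has an arrow) is also stated without construction. These are real holes, not just omitted routine details, so the equivalence criterion part of the argument is not yet a proof.
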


\begin{proof}
    This follows from \cite[Theorem A]{GKKP23}.
\end{proof}

Theorem \ref{Theorem:Epivalence} is known for $\mathbb{A}_2$ by \cite[Corollary 5.4 and Lemma 5.5]{RS08b}, and for linearly oriented $\mathbb{A}_n$-quiver by \cite[Corollary 2.6 and Lemma 4.1]{XZZ14}. For $\Lambda=k[x]/(x^2)$ it recovers the homology functor for perfect differential $kQ$-modules in \cite[Theorem 1.1 b)]{RZ17}, and for $Q$ a linearly oriented $\mathbb{A}_n$-quiver and $\Lambda$ path algebra of linearly oriented $\mathbb{A}_m$-quiver it recovers (the dual of) the equivalence in \cite[Theorem 1.5]{BBOS20} as special cases. An analogue of Theorem \ref{Theorem:Epivalence} was recently shown for Gorenstein projective quiver representations over algebras whose stable Cohen-Macaulay Auslander algebra is semisimple \cite[Theorem 4.2]{HB24}.

Epivalences were introduced by Auslander in \cite[Chapter II]{Aus71} under the name representation equivalences, since they preserve and reflect important representation-theoretic properties. In particular, the epivalence in Theorem \ref{Theorem:Epivalence} induces a bijection
\begin{align}\label{BijectionIndec}
    \renewcommand{\arraystretch}{1}
		\renewcommand{\arraystretch}{1.8}
		\begin{array}{ccc}
		\renewcommand{\arraystretch}{1.1}
  \renewcommand{\arraystretch}{1.2}
		\begin{Bmatrix}
		\text{Indecomposable non-injective} \\
		\text{objects in $\operatorname{mono}(Q,\Lambda)$}
		\end{Bmatrix}^{\cong}
		&
		\xrightarrow{\cong}
		&
  \renewcommand{\arraystretch}{1.1}
		 \begin{Bmatrix}
		\text{Indecomposable objects} \\
		\text{in $\operatorname{rep} (Q,\overline{\operatorname{mod}}\, \Lambda)$}
		\end{Bmatrix}^{\cong}
		\end{array}
\end{align}
where $\cong$ indicates that we are considering isomorphism classes of representations. In many situations the right hand side of this bijection is easier to study than the left hand side. For example, if $\overline{\operatorname{mod}}\,\Lambda$ is itself a module category of an Artin algebra, then the left hand just consists of indecomposable modules over some Artin algebra. This holds true if $\Lambda$ is a path algebra of a linearly oriented $\mathbb{A}_n$-quiver, or a radical-square zero Nakayama algebra. In particular, for a commutative uniserial ring $\Lambda$ of length $2$, we have $\overline{\operatorname{mod}}\,\Lambda\cong \operatorname{mod}k$ where $k$ is the residue field of $\Lambda$. This gives
\begin{Theorem}\label{Theorem:BijectionUniserialLength2}
    Let $\Lambda$ be a commutative artinian uniserial ring of length $2$ with residue field $k$. Then the functor in Theorem \ref{Theorem:Epivalence} induces a bijection 
    \begin{align*}
    \renewcommand{\arraystretch}{1}
		\renewcommand{\arraystretch}{1.8}
		\begin{array}{ccc}
		\renewcommand{\arraystretch}{1.1}
  \renewcommand{\arraystretch}{1.2}
		\begin{Bmatrix}
		\text{Indecomposable non-injective} \\
		\text{objects in $\operatorname{mono}(Q,\Lambda)$}
		\end{Bmatrix}^{\cong}
		&
		\xrightarrow{\cong}
		&
  \renewcommand{\arraystretch}{1.1}
		 \begin{Bmatrix}
		\text{Indecomposable objects} \\
		\text{in $\operatorname{rep} (Q,\operatorname{mod}k)$}
		\end{Bmatrix}^{\cong}
		\end{array}
\end{align*}
In particular, $\operatorname{mono}(Q,\Lambda)$ is of finite representation type if and only if $Q$ is Dynkin.
\end{Theorem}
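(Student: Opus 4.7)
The plan is to reduce this to the bijection \eqref{BijectionIndec} supplied by Theorem \ref{Theorem:Epivalence} by identifying $\overline{\operatorname{mod}}\,\Lambda$ with $\operatorname{mod}k$, and then invoking Gabriel's theorem for the representation type statement.

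First I would analyze $\operatorname{mod}\Lambda$ for $\Lambda$ commutative artinian uniserial of length $2$ with maximal ideal $\mathfrak{m}$. The indecomposable modules are exactly $\Lambda$ and $k=\Lambda/\mathfrak{m}$, and since $\Lambda$ is selfinjective (as noted in the introduction for commutative uniserial rings) the injective modules coincide with the projective modules, i.e.\ the direct summands of free modules. Hence in the injectively stable category the object $\Lambda$ becomes zero, leaving $k$ as the unique indecomposable. The next step is to compute $\operatorname{Hom}_{\overline{\operatorname{mod}}\,\Lambda}(k,k)$: any map $k\to\Lambda$ factors through the socle $\mathfrak{m}\hookrightarrow \Lambda$, and the composite $k\to\mathfrak{m}\hookrightarrow\Lambda\twoheadrightarrow k$ is zero because $\mathfrak{m}$ maps to $0$ in $k$. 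Thus every morphism $k\to k$ factoring through an injective vanishes, so the endomorphism ring of $k$ in $\overline{\operatorname{mod}}\,\Lambda$ is just $k$. This gives an equivalence $\overline{\operatorname{mod}}\,\Lambda\simeq \operatorname{mod}k$ (additive category with one indecomposable whose endomorphism ring is $k$).

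Applying this equivalence componentwise and arrow-wise yields an equivalence $\operatorname{rep}(Q,\overline{\operatorname{mod}}\,\Lambda)\simeq \operatorname{rep}(Q,\operatorname{mod}k)$. Combining with Theorem \ref{Theorem:Epivalence} and the induced bijection \eqref{BijectionIndec} produces the desired bijection between indecomposable non-injective objects in $\operatorname{mono}(Q,\Lambda)$ and indecomposable objects in $\operatorname{rep}(Q,\operatorname{mod}k)$.

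For the representation type statement, I would observe that by Proposition \ref{Proposition:InjectivesMono} the injective indecomposables of $\operatorname{mono}(Q,\Lambda)$ are, up to isomorphism, the representations $f_!(J(\mathtt{i}))$ with $\mathtt{i}\in Q_0$ and $J$ an indecomposable injective $\Lambda$-module; since $Q_0$ and the set of indecomposable injective $\Lambda$-modules are both finite, there are only finitely many such classes. Hence $\operatorname{mono}(Q,\Lambda)$ has finite representation type if and only if the set of non-injective indecomposables is finite, which by the bijection above is equivalent to $\operatorname{rep}(Q,\operatorname{mod}k)\simeq \operatorname{mod}kQ$ having finite representation type. Gabriel's theorem then gives the equivalence with $Q$ being Dynkin.

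The only genuinely non-formal point is the identification $\overline{\operatorname{mod}}\,\Lambda\simeq\operatorname{mod}k$; once that is in hand, everything else is transport of structure plus a standard application of Gabriel's theorem.
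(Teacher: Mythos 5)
Your proof is correct and follows essentially the same route the paper takes: the paper deduces the theorem from the identification $\overline{\operatorname{mod}}\,\Lambda\cong\operatorname{mod}k$ (asserted in the paragraph preceding the statement) together with the bijection \eqref{BijectionIndec} and Gabriel's theorem. You simply fill in the details the paper leaves implicit, namely the computation that the stable endomorphism ring of $k$ is $k$ (using that $\Lambda$ is selfinjective so its injectives are the projectives) and the observation via Proposition \ref{Proposition:InjectivesMono} that there are only finitely many indecomposable injectives in $\operatorname{mono}(Q,\Lambda)$, so the representation-type conclusion transfers.
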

Note that this result also follows from \cite[Theorem 2.1]{Luo21}. It applies in particular to the dual numbers $k[x]/(x^2)$ and to $\mathbb{Z}/(p^2)$.

In order to recover results about indecomposables in $\operatorname{mono}(Q,\Lambda)$ from indecomposables in $\operatorname{rep} (Q,\overline{\operatorname{mod}}\, \Lambda)$, we need to construct an inverse to \eqref{BijectionIndec}. In the next subsection we explain how to do this, using the Mimo construction.

	\subsection{Mimo-construction}\label{Subsection:MimoConstruction}

The minimal right $\operatorname{mono}(Q,\Lambda)$-approximation in $\operatorname{rep}(Q,\operatorname{mod}\Lambda)$ is called the Mimo-construction. We start by giving a description of it.

 \begin{Definition}\label{Definition:Mimo}
     Let $(M_\mathtt{i},h_\alpha)$ be an object in $\operatorname{rep}(Q,\operatorname{mod}\Lambda)$. The \textit{Mimo construction} $$\operatorname{Mimo}(M_\mathtt{i},h_\alpha)=(M'_\mathtt{i},h'_\alpha)$$ of $(M_\mathtt{i},h_\alpha)$ is defined as follows:  
     \begin{enumerate}
         \item For each vertex $\mathtt{i}\in Q_0$ choose an injective envelope of $\Lambda$-modules $j_\mathtt{i}\colon K_\mathtt{i}\to J_\mathtt{i}$, where $K_\mathtt{i}$ is the kernel of the morphism
		\[
		 \bigoplus_{\substack{\alpha\in Q_1\\t(\alpha)=\mathtt{i}}} M_{s(\alpha)}\xrightarrow{(h_\alpha)_\alpha} M_\mathtt{i}.
		\]
  \item For each vertex $\mathtt{i}\in Q_0$ choose a lift $e_{\mathtt{i}}$ making the following diagram commutative
  \[
	\begin{tikzcd}
	K_\mathtt{i}\arrow{d}\arrow{dr}{j_\mathtt{i}} & \\
	\bigoplus_{\substack{\alpha\in Q_1\\t(\alpha)=\mathtt{i}}} M_{s(\alpha)} \arrow[r,dashed,"e_\mathtt{i}" below] & J_\mathtt{i}.
	\end{tikzcd}
	\]  
  \item For each vertex $\mathtt{i}\in Q_0$ let
		\[
		M'_{\mathtt{i}}= M_{\mathtt{i}}\oplus \bigoplus_{p\in Q_{\geq 0},t(p)=\mathtt{i}}J_{s(p)}
		\] 
		where $Q_{\geq 0}$ is the set of paths in $Q$, and $s(p)$ and $t(p)$ denotes the source and target of $p$, respectively.
  \item For each arrow $\beta\colon \mathtt{i}\to \mathtt{k}$ let
		\[
		h'_\beta\colon M_{\mathtt{i}}\oplus \bigoplus_{p\in Q_{\geq 0},t(p)=\mathtt{i}}J_{s(p)}\to M_{\mathtt{k}}\oplus \bigoplus_{q\in Q_{\geq 0},t(q)=\mathtt{k}}J_{s(q)}
		\]
	 be the morphism induced by the identity $J_{s(p)}\xrightarrow{1}J_{s(q)}$ for $q=\beta p$, the structure map $h_\beta\colon M_\mathtt{i}\to M_\mathtt{k}$, and the composite $M_\mathtt{i}\xrightarrow{} \bigoplus_{\substack{\alpha\in Q_1, t(\alpha)=\mathtt{k}}} M_{s(\alpha)}\xrightarrow{e_{\mathtt{k}}} J_{\mathtt{k}}$ where the first map is the canonical inclusion. 
     \end{enumerate}
 \end{Definition}

 \begin{Theorem}\label{Theorem:MinimalRightApproximation}
   Let $(M_\mathtt{i},h_\alpha)$ be an object in $\operatorname{rep}(Q,\operatorname{mod}\Lambda)$. The canonical projection
   \[
   \operatorname{Mimo}(M_\mathtt{i},h_\alpha)\to (M_\mathtt{i},h_\alpha), \quad \quad M_{\mathtt{i}}\oplus \bigoplus_{p\in Q_{\geq 0},t(p)=\mathtt{i}}J_{s(p)}\xrightarrow{\begin{pmatrix}
       1&0
   \end{pmatrix}} M_\mathtt{i}
   \]
   is a minimal right $\operatorname{mono}(Q,\Lambda)$-approximation.
 \end{Theorem}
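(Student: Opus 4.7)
The plan is to verify three facts in order: (a) $\operatorname{Mimo}(M_\mathtt{i},h_\alpha)$ lies in $\operatorname{mono}(Q,\Lambda)$; (b) the canonical projection $\pi$ is a right $\operatorname{mono}(Q,\Lambda)$-approximation; (c) $\pi$ is right minimal. For (a), I would fix a vertex $\mathtt{j}$ and expand the candidate monomorphism
\[
h'_{\mathtt{j},\operatorname{in}}\colon \bigoplus_{\beta\in Q_1,\, t(\beta)=\mathtt{j}} M'_{s(\beta)} \longrightarrow M'_\mathtt{j}
\]
under the decomposition $M'_\mathtt{i} = M_\mathtt{i}\oplus \bigoplus_{p:t(p)=\mathtt{i}}J_{s(p)}$. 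The identity maps $J_{s(p)}\to J_{s(\beta p)}$ assemble into an isomorphism onto the positive-length-path summands of the codomain, and the remaining block
\[
\bigoplus_{\beta} M_{s(\beta)} \longrightarrow M_\mathtt{j} \oplus J_\mathtt{j}
\]
has components $(h_\beta)_\beta$ and $e_\mathtt{j}$. Any kernel element lies in $K_\mathtt{j}$, on which $e_\mathtt{j}$ restricts to the injective envelope $j_\mathtt{j}$, hence is zero.

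For (b), given $N=(N_\mathtt{i},g_\alpha)\in \operatorname{mono}(Q,\Lambda)$ and $\varphi\colon N\to (M_\mathtt{i},h_\alpha)$, I would build $\tilde\varphi\colon N\to \operatorname{Mimo}(M)$ componentwise, writing $\tilde\varphi_\mathtt{i} = (\varphi_\mathtt{i},(\psi_p)_{p:t(p)=\mathtt{i}})$ and defining each $\psi_p\colon N_\mathtt{i}\to J_{s(p)}$ by induction on the length of $p$. Working out the commutativity $\tilde\varphi_\mathtt{k} g_\beta = h'_\beta \tilde\varphi_\mathtt{i}$ for every arrow $\beta\colon\mathtt{i}\to\mathtt{k}$ shows that $\psi_{e_\mathtt{k}}$ must satisfy $\psi_{e_\mathtt{k}}\circ g_{\mathtt{k},\operatorname{in}} = e_\mathtt{k}\circ \bigoplus_\beta \varphi_{s(\beta)}$, while for $p$ of positive length with unique last arrow $\gamma$ and decomposition $p=\gamma p'$ one needs $\psi_p\circ g_\gamma = \psi_{p'}$ together with $\psi_p\circ g_\beta = 0$ for every other arrow $\beta$ into $t(p)$. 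Since $N$ is monomorphic, $g_{t(p),\operatorname{in}}$ is monic, and since $J_{s(p)}$ is injective, the map prescribed on the image of $g_{t(p),\operatorname{in}}$ extends to the desired $\psi_p$ on all of $N_{t(p)}$.

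The step I expect to be the main obstacle is (c). Suppose $f\colon \operatorname{Mimo}(M)\to \operatorname{Mimo}(M)$ satisfies $\pi f = \pi$. Decomposing $f_\mathtt{i}=\begin{psmallmatrix} 1 & 0 \\ a_\mathtt{i} & b_\mathtt{i}\end{psmallmatrix}$ with respect to $M_\mathtt{i}\oplus J'_\mathtt{i}$ (where $J'_\mathtt{i}=\bigoplus_{p:t(p)=\mathtt{i}}J_{s(p)}$), the task reduces to showing each $b_\mathtt{i}$ is an automorphism. Analyzing the $J$-to-$J$ part of $f_\mathtt{k} h'_\alpha = h'_\alpha f_\mathtt{i}$ for every arrow $\alpha\colon\mathtt{i}\to\mathtt{k}$ shows that, after grouping paths ending at $\mathtt{k}$ by last arrow and singling out the trivial path, $b_\mathtt{k}$ becomes lower block-triangular with the $\alpha$-diagonal block equal (up to re-indexing) to $b_\mathtt{i}$. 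The remaining diagonal block $b^{(e_\mathtt{k})}\colon J_\mathtt{k}\to J_\mathtt{k}$ is controlled by the $M$-to-$J$ part of the same equation: summing over arrows $\beta$ into $\mathtt{k}$ and restricting to $K_\mathtt{k}\subseteq \bigoplus M_{s(\beta)}$ (where $h_{\mathtt{k},\operatorname{in}}$ vanishes and $e_\mathtt{k}$ restricts to $j_\mathtt{k}$) yields $b^{(e_\mathtt{k})}\circ j_\mathtt{k} = j_\mathtt{k}$. The essentiality of the injective envelope $j_\mathtt{k}$ then forces $b^{(e_\mathtt{k})}$ to be an automorphism, and induction on the vertices of the acyclic quiver $Q$ gives that every $b_\mathtt{i}$, and hence $f$, is an isomorphism.
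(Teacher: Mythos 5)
Your proof is correct, but it takes a different route from the paper's: the paper disposes of the theorem by citation, invoking \cite[Lemma 3.2 and Proposition 3.3]{LZ13} for the fact that $\operatorname{Mimo}$ lands in $\operatorname{mono}(Q,\Lambda)$ and that the projection is a right approximation, and \cite[Theorem 6.2]{GKKP23} for minimality, whereas you carry out a direct, self-contained verification of all three facts. For (a), your block decomposition of $h'_{\mathtt{j},\operatorname{in}}$ into the shift isomorphism on positive-length-path summands and the remaining block $\bigl(h_{\mathtt{j},\operatorname{in}},\, e_\mathtt{j}\bigr)$, whose injectivity reduces to that of the injective envelope $j_\mathtt{j}$ on $K_\mathtt{j}$, is exactly what the definition is designed to make work. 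For (b), the recursion on path length, with each $\psi_p$ produced by extending along the monomorphism $g_{t(p),\operatorname{in}}$ into the injective $J_{s(p)}$, is the natural construction of the lift and is sound because the only data $\psi_p$ depends on are $\psi_{p'}$ for the strictly shorter path $p'$. For (c), the commutation $b_\mathtt{k}d_\alpha = d_\alpha b_{s(\alpha)}$ indeed forces the lower block-triangular shape of $b_\mathtt{k}$ with diagonal blocks $b^{(e_\mathtt{k})}$ and the $b_{s(\alpha)}$; the $M$-to-$J$ component, summed over incoming arrows and restricted to $K_\mathtt{k}$, gives $b^{(e_\mathtt{k})}\circ j_\mathtt{k}=j_\mathtt{k}$, and the essentiality of $j_\mathtt{k}$ (together with finite length of $J_\mathtt{k}$) makes $b^{(e_\mathtt{k})}$ an automorphism; a topological-order induction then finishes it. The trade-off is the usual one: the paper keeps the exposition short and places the Mimo construction within the existing literature, while your argument is longer but self-contained and makes transparent that right minimality is governed precisely by the essentiality of the envelopes $j_\mathtt{i}$ baked into Definition \ref{Definition:Mimo}.
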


 \begin{proof}
It is a right $\operatorname{mono}(Q,\Lambda)$-approximation by \cite[Lemma 3.2 and Proposition 3.3]{LZ13}. Minimality follows from \cite[Theorem 6.2]{GKKP23}.    
 \end{proof}
 The Mimo construction was first introduced in \cite{RS08} for the submodule category, where it was constructed by taking a minimal monomorphism, hence the name. In \cite{Zha11} the Mimo construction was extended to linearly oriented $\mathbb{A}_n$-quivers, and it was shown that it gives a minimal right $\operatorname{mono}(Q,\Lambda)$-approximation \cite[Theorem 2.1]{Zha11}. The general formula was obtained in \cite[Section 3a]{LZ13}. However, as far as we know minimality wasn't proven until \cite{GKKP23}. Note that there are shorter ways to show that $\operatorname{mono}(Q,\Lambda)$ has a minimal right approximation, see e.g. Subsection \ref{Subsection:Monomorphism categories over fields}. The advantage of Theorem \ref{Theorem:MinimalRightApproximation} is that it gives an explicit description of it. 

 Next we explain how the Mimo-construction provides an inverse to \eqref{BijectionIndec}. In the following, for $(M_\mathtt{i},h_\alpha)\in \operatorname{rep}(Q,\overline{\operatorname{mod}}\,\Lambda)$, we choose a representation $(\hat{M}_\mathtt{i},\hat{h}_\alpha)\in \operatorname{rep}(Q,\operatorname{mod}\Lambda)$ as follows:
 \begin{enumerate}
     \item The $\Lambda$-module $\hat{M}_\mathtt{i}$ is the unique module (up to isomorphy) with no nonzero injective summands, and which is isomorphic to $M_\mathtt{i}$ in $\overline{\operatorname{mod}}\,\Lambda$.
     \item The morphism $\hat{h}_\alpha\colon \hat{M}_{s(\alpha)}\to \hat{M}_{t(\alpha)}$ is a choice of a lift of $h_\alpha\colon  M_{s(\alpha)}\to M_{t(\alpha)}$ to $\operatorname{mod}\Lambda$. 
 \end{enumerate}
We get the following result.
 
 \begin{Theorem}\label{Theorem:MimoInverse}
   The association 
   \[
   (M_\mathtt{i},h_\alpha)\mapsto \operatorname{Mimo}(\hat{M}_\mathtt{i},\hat{h}_\alpha)
   \]
   is well-defined up to isomorphism and gives an inverse to the map \eqref{BijectionIndec}.
 \end{Theorem}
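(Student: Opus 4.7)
The plan is to exploit the explicit formula defining Mimo together with the fact that $\Phi$ is an epivalence. The key preliminary computation is that for any $(N_\mathtt{i}, h_\alpha) \in \operatorname{rep}(Q, \operatorname{mod}\Lambda)$ there is a canonical isomorphism $\Phi(\operatorname{Mimo}(N_\mathtt{i}, h_\alpha)) \cong (N_\mathtt{i}, h_\alpha)$ in $\operatorname{rep}(Q, \overline{\operatorname{mod}}\, \Lambda)$. This is read off Definition \ref{Definition:Mimo}: the extra summands $J_{s(p)}$ appearing in $\operatorname{Mimo}(N_\mathtt{i}, h_\alpha)_\mathtt{i}$ are injective $\Lambda$-modules and vanish in $\overline{\operatorname{mod}}\, \Lambda$, while inspection of the new structure maps $h'_\beta$ shows that only the original component $h_\beta$ survives the passage to the stable quotient. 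Applied to $(\hat{M}_\mathtt{i}, \hat{h}_\alpha)$ this already yields $\Phi(\operatorname{Mimo}(\hat{M}_\mathtt{i}, \hat{h}_\alpha)) \cong (M_\mathtt{i}, h_\alpha)$, which is one of the two composite identities needed for the claimed inverse.

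For well-definedness of the assignment on isomorphism classes, Krull-Schmidt guarantees that $\hat{M}_\mathtt{i}$ is unique up to isomorphism, while any two lifts of the same stable morphism differ by maps factoring through injective $\Lambda$-modules. By the preliminary computation, both resulting Mimo objects share the same image under $\Phi$, and since $\Phi$ is full and reflects isomorphisms, the two Mimos become isomorphic in $\overline{\operatorname{mono}}(Q, \Lambda)$. The pivotal technical step remaining is to show that $\operatorname{Mimo}(\hat{M}_\mathtt{i}, \hat{h}_\alpha)$ has no nonzero injective summand in $\operatorname{mono}(Q, \Lambda)$; granted this, Krull-Schmidt in $\operatorname{mono}(Q, \Lambda)$ implies that any isomorphism in $\overline{\operatorname{mono}}(Q, \Lambda)$ between two such objects lifts to an isomorphism in $\operatorname{mono}(Q, \Lambda)$. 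The same no-injective-summands property, combined with $\Phi$ sending indecomposable non-injective objects to indecomposable objects, also shows that $\operatorname{Mimo}(\hat{M}_\mathtt{i}, \hat{h}_\alpha)$ is itself indecomposable non-injective in $\operatorname{mono}(Q, \Lambda)$ whenever the input is indecomposable. The inverse property in the other direction is then immediate: for indecomposable non-injective $N \in \operatorname{mono}(Q, \Lambda)$, both $N$ and $\operatorname{Mimo}(\widehat{\Phi(N)})$ lie on the left side of \eqref{BijectionIndec} with the same $\Phi$-image, hence must be isomorphic.

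The main obstacle I anticipate is the no-injective-summands lemma. By Proposition \ref{Proposition:InjectivesMono}, any indecomposable injective summand of the Mimo has the form $I = f_!(J(\mathtt{i}_0))$ for an indecomposable injective $\Lambda$-module $J$ and a vertex $\mathtt{i}_0$; minimality of the approximation $\pi$ from Theorem \ref{Theorem:MinimalRightApproximation} rules out $\pi|_I = 0$, so via the adjunction $f_! \dashv f^*$ one obtains a nonzero morphism $J \to \hat{M}_{\mathtt{i}_0}$. The contradiction must come from the absence of injective summands in $\hat{M}_{\mathtt{i}_0}$ together with a careful analysis of how this morphism extends compatibly with the direct sum $\bigoplus_\alpha \hat{M}_{s(\alpha)}$ appearing in the construction of $\pi$; tracking this precisely is the portion of the proof I would expect to require the most attention.
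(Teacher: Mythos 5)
The paper's proof of this theorem is a bare citation to \cite[Proposition 7.8 and Theorem 7.9]{GKKP23}, so there is no internal argument in this paper to compare your proposal against line by line. That said, the two key facts you isolate — (i) $\Phi\bigl(\operatorname{Mimo}(\hat{M}_\mathtt{i},\hat{h}_\alpha)\bigr)\cong (M_\mathtt{i},h_\alpha)$ read off from Definition~\ref{Definition:Mimo}, and (ii) $\operatorname{Mimo}(\hat{M}_\mathtt{i},\hat{h}_\alpha)$ has no nonzero injective direct summand when the $\hat{M}_\mathtt{i}$ have none — are precisely the content of the results the paper cites, and the bookkeeping you do around them (well-definedness via Krull--Schmidt, indecomposability of the Mimo from indecomposability of its $\Phi$-image, the final ``same $\Phi$-image on the left side of \eqref{BijectionIndec} forces isomorphism'') is sound once (i) and (ii) are in hand. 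Your computation of (i) from the block form of the structure maps $h'_\beta$ is correct: all off-diagonal entries have injective source or target and hence die in $\overline{\operatorname{mod}}\,\Lambda$.

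The genuine gap, as you yourself flag, is (ii), and the sketch you offer does not close it. Minimality of the approximation $\pi$ gives $\pi|_I\neq 0$, and adjunction then produces a nonzero morphism $J\to\hat{M}_{\mathtt{i}_0}$ — but this on its own is no contradiction: such a morphism exists whenever the simple socle of $J$ occurs in $\operatorname{soc}\hat{M}_{\mathtt{i}_0}$, which is perfectly compatible with $\hat{M}_{\mathtt{i}_0}$ having no injective summand. What is actually needed is to exploit the full force of the split inclusion $I=f_!(J(\mathtt{i}_0))\hookrightarrow\operatorname{Mimo}(\hat{M})$: at \emph{each} vertex $\mathtt{k}$ reachable from $\mathtt{i}_0$, the component $I_\mathtt{k}=\bigoplus_{p\colon\mathtt{i}_0\to\mathtt{k}}J$ must embed as a vertex-wise direct summand of $\hat{M}_\mathtt{k}\oplus\bigoplus_{t(q)=\mathtt{k}}J_{s(q)}$, compatibly with the structure maps of both $I$ and the Mimo. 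Since $\hat{M}_\mathtt{k}$ contributes no injective summands, the copy of $J$ at vertex $\mathtt{i}_0$ must split off the injective part $\bigoplus_{t(q)=\mathtt{i}_0}J_{s(q)}$, and one then has to trace how the specific definition of the $J_{s(p)}$ as injective envelopes of the kernels $K_{s(p)}$, together with the structure maps $h'_\beta$ being upper triangular with the lift $e_\mathtt{k}$ in the off-diagonal block, rules out such a compatible family of splittings. That global argument is substantially more involved than the single-vertex observation in your sketch, and it is exactly what the cited \cite[Proposition 7.8]{GKKP23} supplies. So: right skeleton, correct identification of the crux, but the crux itself is left open and your hint for it does not yet point to a proof.
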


 \begin{proof}
     This follows from \cite[Proposition 7.8 and Theorem 7.9]{GKKP23}.
 \end{proof}
Theorem \ref{Theorem:MimoInverse} is known for $\mathbb{A}_2$ by \cite[Corollaries 5.3 and 5.4]{RS08}, and for linearly oriented $\mathbb{A}_n$ by \cite[Corollaries 2.5 and 2.6]{XZZ14}.

We apply Theorem \ref{Theorem:MimoInverse} to get the indecomposable monomorphic representations of a $\mathbb{D}_4$-quiver over $\mathbb{Z}/(p^2)$.

\begin{Example}\label{Example:D_n4OverCyclicLength2}
Let $k=\mathbb{Z}/(p)$ denote the finite field with $p$ elements, let $\Lambda=\mathbb{Z}/(p^2)$, and let $Q$ be the quiver 
\[
\begin{tikzcd}\mathtt{1}\arrow{rd}&\mathtt{2}\arrow{d}&\mathtt{3}\arrow{ld}\\&\mathtt{4}\end{tikzcd}
\]
It is well-known that the indecomposable objects in $\operatorname{mod} kQ$ are given by
\begin{align*}
\begin{tikzcd}[ampersand replacement=\&]k\arrow{rd}\&0\arrow{d}\&0\arrow{ld}\\{}\&0\end{tikzcd}\qquad
\begin{tikzcd}[ampersand
replacement=\&]0\arrow{rd}\&k\arrow{d}\&0\arrow{ld}\\{}\&0\end{tikzcd}\qquad 
\begin{tikzcd}[ampersand replacement=\&]0\arrow{rd}\&0\arrow{d}\&k\arrow{ld}\\{}\&0\end{tikzcd} \\
\begin{tikzcd}[ampersand replacement=\&]0\arrow{rd}\&0\arrow{d}\&0\arrow{ld}\\{}\&k\end{tikzcd}\qquad 
\begin{tikzcd}[ampersand replacement=\&]k\arrow{rd}{1}\&0\arrow{d}\&0\arrow{ld}\\{}\&k\end{tikzcd}\qquad
\begin{tikzcd}[ampersand
replacement=\&]0\arrow{rd}\&k\arrow{d}{1}\&0\arrow{ld}\\{}\&k\end{tikzcd}\\
\begin{tikzcd}[ampersand replacement=\&]0\arrow{rd}\&0\arrow{d}\&k\arrow{ld}{1}\\{}\&k\end{tikzcd}\qquad
\begin{tikzcd}[ampersand
replacement=\&]k\arrow{rd}[swap]{\left(\begin{smallmatrix}1\\0\end{smallmatrix}\right)}\&k\arrow{d}{\left(\begin{smallmatrix}0\\1\end{smallmatrix}\right)}\&k\arrow{ld}{\left(\begin{smallmatrix}1\\1\end{smallmatrix}\right)}\\{}\&k^2\end{tikzcd}\qquad 
\begin{tikzcd}[ampersand replacement=\&]0\arrow{rd}\&k\arrow{d}{1}\&k\arrow{ld}{1}\\{}\&k\end{tikzcd}\\
\begin{tikzcd}[ampersand
replacement=\&]k\arrow{rd}{1}\&0\arrow{d}\&k\arrow{ld}{1}\\{}\&k\end{tikzcd}\qquad 
\begin{tikzcd}[ampersand replacement=\&]k\arrow{rd}{1}\&k\arrow{d}{1}\&0\arrow{ld}\\{}\&k\end{tikzcd}\qquad
\begin{tikzcd}[ampersand
replacement=\&]k\arrow{rd}[swap]{1}\&k\arrow{d}{1}\&k\arrow{ld}{1}\\{}\&k\end{tikzcd}
\end{align*}
Applying the Mimo-construction and using Theorems \ref{Theorem:BijectionUniserialLength2} and \ref{Theorem:MimoInverse} we obtain the following list of indecomposable non-injective objects in $\operatorname{mono}(Q,\Lambda)$:
\begin{small}
\begin{align*}\begin{tikzcd}[ampersand replacement=\&]k\arrow{rd}{\iota}\&0\arrow{d}\&0\arrow{ld}\\{}\&\Lambda\end{tikzcd}\qquad
\begin{tikzcd}[ampersand
replacement=\&]0\arrow{rd}\&k\arrow{d}{\iota}\&0\arrow{ld}\\{}\&\Lambda\end{tikzcd}\qquad 
\begin{tikzcd}[ampersand replacement=\&]0\arrow{rd}\&0\arrow{d}\&k\arrow{ld}{\iota}\\{}\&\Lambda\end{tikzcd}\\
\begin{tikzcd}[ampersand replacement=\&]0\arrow{rd}\&0\arrow{d}\&0\arrow{ld}\\{}\&k\end{tikzcd}\qquad 
\begin{tikzcd}[ampersand replacement=\&]k\arrow{rd}{1}\&0\arrow{d}\&0\arrow{ld}\\{}\&k\end{tikzcd}\qquad
\begin{tikzcd}[ampersand
replacement=\&]0\arrow{rd}\&k\arrow{d}{1}\&0\arrow{ld}\\{}\&k\end{tikzcd}\\
\begin{tikzcd}[ampersand replacement=\&]0\arrow{rd}\&0\arrow{d}\&k\arrow{ld}{1}\\{}\&k\end{tikzcd}\qquad
\begin{tikzcd}[ampersand
replacement=\&]k\arrow{rd}[swap]{\left(\begin{smallmatrix}1\\0\\0\end{smallmatrix}\right)}\&k\arrow{d}{\left(\begin{smallmatrix}0\\1\\0\end{smallmatrix}\right)}\&k\arrow{ld}{\left(\begin{smallmatrix}1\\1\\\iota\end{smallmatrix}\right)}\\{}\&k^2\oplus \Lambda\end{tikzcd}\qquad 
\begin{tikzcd}[ampersand replacement=\&]0\arrow{rd}\&k\arrow{d}{\left(\begin{smallmatrix}1\\0\end{smallmatrix}\right)}\&k\arrow{ld}{\left(\begin{smallmatrix}1\\\iota\end{smallmatrix}\right)}\\{}\&k\oplus \Lambda\end{tikzcd}\\
\begin{tikzcd}[ampersand
replacement=\&]k\arrow{rd}{\left(\begin{smallmatrix}1\\0\end{smallmatrix}\right)}\&0\arrow{d}\&k\arrow{ld}{\left(\begin{smallmatrix}1\\\iota\end{smallmatrix}\right)}\\{}\&k\oplus \Lambda\end{tikzcd}\qquad 
\begin{tikzcd}[ampersand replacement=\&]k\arrow{rd}[swap]{\left(\begin{smallmatrix}1\\0\end{smallmatrix}\right)}\&k\arrow{d}{\left(\begin{smallmatrix}1\\\iota\end{smallmatrix}\right)}\&0\arrow{ld}\\{}\&k\oplus \Lambda\end{tikzcd}\qquad
\begin{tikzcd}[ampersand
replacement=\&]k\arrow{rd}[swap]{\left(\begin{smallmatrix}1\\0\\0\end{smallmatrix}\right)}\&k\arrow{d}{\left(\begin{smallmatrix}1\\\iota\\0\end{smallmatrix}\right)}\&k\arrow{ld}{\left(\begin{smallmatrix}1\\0\\\iota\end{smallmatrix}\right)}\\{}\&k\oplus \Lambda^2\end{tikzcd}
\end{align*}
\end{small}
Here $\iota$ denotes the canonical inclusion. Finally, by Proposition \ref{Proposition:InjectivesMono} \eqref{Proposition:InjectivesMono:2} the four indecomposable injective objects in $\operatorname{mono}(Q,\Lambda)$ are given by
\begin{align*}
f_!(\Lambda(\mathtt{1}))&=\begin{tikzcd}[ampersand replacement=\&]\Lambda\arrow{rd}{1}\&0\arrow{d}\&0\arrow{ld}\\{}\&\Lambda\end{tikzcd}\qquad
f_!(\Lambda(\mathtt{2}))=\begin{tikzcd}[ampersand replacement=\&]0\arrow{rd}\&\Lambda\arrow{d}{1}\&0\arrow{ld}\\{}\&\Lambda\end{tikzcd}\\
f_!(\Lambda(\mathtt{3}))&=\begin{tikzcd}[ampersand replacement=\&]0\arrow{rd}\&0\arrow{d}\&\Lambda\arrow{ld}{1}\\{}\&\Lambda\end{tikzcd}\qquad
f_!(\Lambda(\mathtt{4}))=\begin{tikzcd}[ampersand replacement=\&]0\arrow{rd}\&0\arrow{d}\&0\arrow{ld}\\{}\&\Lambda\end{tikzcd}\\
\end{align*}
\end{Example}
	\subsection{Stable equivalences}

 Theorem \ref{Theorem:Epivalence} tells us that a large part of the representation theory of $\operatorname{mono}(Q,\Lambda)$ only depends on the injectively stable category $\overline{\operatorname{mod}}\,\Lambda$ of $\Lambda$. In particular
 
 \begin{Corollary}\label{Corollary:StableEquivalence}
     Let $Q$ be a finite acyclic quiver and let $\Lambda$ and $\Gamma$ be Artin algebras. Assume we have an equivalence
     \[
     \overline{\operatorname{mod}}\,\Lambda\cong  \overline{\operatorname{mod}}\,\Gamma
     \]
     between the injectively stable categories. Then there is a bijection
     \begin{align*}
    \renewcommand{\arraystretch}{1}
		\renewcommand{\arraystretch}{1.8}
		\begin{array}{ccc}
		\renewcommand{\arraystretch}{1.1}
  \renewcommand{\arraystretch}{1.2}
		\begin{Bmatrix}
		\text{Indecomposable non-injective} \\
		\text{objects in $\operatorname{mono}(Q,\Lambda)$}
		\end{Bmatrix}^{\cong}
		&
		\xrightarrow{\cong}
		&
  \renewcommand{\arraystretch}{1.1}
		 \begin{Bmatrix}
		\text{Indecomposable non-injective} \\
		\text{objects in $\operatorname{mono}(Q,\Gamma)$}
		\end{Bmatrix}^{\cong}
		\end{array}
\end{align*}
 \end{Corollary}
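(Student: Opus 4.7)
The plan is to chain together three bijections, using Theorem \ref{Theorem:Epivalence} as the bridge on each side. First, I would apply Theorem \ref{Theorem:Epivalence} to $\Lambda$ to obtain the bijection \eqref{BijectionIndec}, namely
\[
\bigl\{\text{indec.\ non-injective objects in }\operatorname{mono}(Q,\Lambda)\bigr\}^{\cong}\xrightarrow{\cong}\bigl\{\text{indec.\ objects in }\operatorname{rep}(Q,\overline{\operatorname{mod}}\,\Lambda)\bigr\}^{\cong},
\]
and similarly for $\Gamma$. Both of these are provided by the paper and do not require additional work.

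Next, I would promote the given equivalence $F\colon\overline{\operatorname{mod}}\,\Lambda\xrightarrow{\simeq}\overline{\operatorname{mod}}\,\Gamma$ to an equivalence at the level of quiver representations. Concretely, define $F_Q\colon\operatorname{rep}(Q,\overline{\operatorname{mod}}\,\Lambda)\to\operatorname{rep}(Q,\overline{\operatorname{mod}}\,\Gamma)$ by applying $F$ componentwise, $(M_\mathtt{i},h_\alpha)\mapsto(F(M_\mathtt{i}),F(h_\alpha))$, and define $F_Q$ on morphisms in the obvious way. Using a quasi-inverse $G$ of $F$ together with natural isomorphisms $GF\simeq\operatorname{id}$ and $FG\simeq\operatorname{id}$, one readily checks that $G_Q$ is a quasi-inverse to $F_Q$, so that $F_Q$ is an equivalence of categories. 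Any equivalence of categories induces a bijection on isomorphism classes of indecomposable objects.

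Composing the three bijections --- one instance of \eqref{BijectionIndec} for $\Lambda$, the componentwise equivalence $F_Q$, and the inverse of \eqref{BijectionIndec} for $\Gamma$ --- yields the desired bijection. I do not expect any serious obstacle: the only point requiring minor care is the routine verification that $F_Q$ is well-defined and an equivalence, which is a formal consequence of $\operatorname{rep}(Q,-)$ being functorial in its second argument. All representation-theoretic content is already packaged into Theorem \ref{Theorem:Epivalence}.
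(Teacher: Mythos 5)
Your proof is correct and matches the argument the paper implicitly intends: the paper gives no explicit proof of this Corollary, presenting it as an immediate consequence of the bijection \eqref{BijectionIndec} from Theorem \ref{Theorem:Epivalence}, and the missing link — that a componentwise application of the equivalence $\overline{\operatorname{mod}}\,\Lambda\cong\overline{\operatorname{mod}}\,\Gamma$ induces an equivalence $\operatorname{rep}(Q,\overline{\operatorname{mod}}\,\Lambda)\cong\operatorname{rep}(Q,\overline{\operatorname{mod}}\,\Gamma)$, hence a bijection on indecomposables — is exactly what you supply.
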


If $\Lambda$ and $\Gamma$ are commutative uniserial rings of length $3$ with the same residue field, then we have an equivalence
\[
\overline{\operatorname{mod}}\,\Lambda \cong \overline{\operatorname{mod}}\, \Gamma
\]
see \cite[Proposition 8.11]{GKKP23}. In particular, Corollary \ref{Corollary:StableEquivalence} can be applied to this setting. In fact, in this case it can be extended to a bijection between all indecomposable objects. Furthermore, the bijection preserves the underlying \textit{partition vectors} (sometimes called the \textit{type}). We explain what this means below.

Let $\Lambda$ be a commutative uniserial ring of length $n$. Then there is a bijection between isomorphism classes of finitely generated $\Lambda$-modules, and partitions, i.e.  sequences $$\overline{\alpha}=(\alpha_1\geq \alpha \geq \ldots \geq \alpha_m\geq 1)$$ with $\alpha_1\leq n$. Explicitly the bijection sends $\overline{\alpha}$ to the $\Lambda$-module
\[
M(\overline{\alpha})\colonequals \bigoplus_{i=1}^m M(\alpha_i)
\]
where $M(\alpha_i)$ is the unique (up to isomorphism) indecomposable $\Lambda$-module with length $\alpha_i$. Given a representation $(M_\mathtt{i},h_\alpha)\in \operatorname{rep}(Q,\operatorname{mod}\Lambda)$, we have an associated partition $\overline{\alpha}^\mathtt{i}$  for each $\mathtt{i}\in Q_0$, defined by $M(\overline{\alpha}^\mathtt{i})\cong M_\mathtt{i}$. The tuple $(\overline{\alpha}^\mathtt{i})_{\mathtt{i}\in Q_0}$ is called the \textit{partition vector} of $(M_\mathtt{i},h_\alpha)$.

\begin{Theorem}{\cite[Theorem 8.13]{GKKP23}}\label{Theorem:BijectionUniserialLength3}
Let $\Lambda$ and $\Gamma$ be commutative uniserial rings of length $\leq 3$ with the same residue field. Then there is a bijection
     \begin{align*}
    \renewcommand{\arraystretch}{1}
		\renewcommand{\arraystretch}{1.8}
		\begin{array}{ccc}
		\renewcommand{\arraystretch}{1.1}
  \renewcommand{\arraystretch}{1.2}
		\begin{Bmatrix}
		\text{Indecomposable objects} \\
		\text{in $\operatorname{mono}(Q,\Lambda)$}
		\end{Bmatrix}^{\cong}
		&
		\xrightarrow{\cong}
		&
  \renewcommand{\arraystretch}{1.1}
		 \begin{Bmatrix}
		\text{Indecomposable objects} \\
		\text{in $\operatorname{mono}(Q,\Gamma)$}
		\end{Bmatrix}^{\cong}
		\end{array}
\end{align*}
which preserves the underlying partition vector of the representations.
\end{Theorem}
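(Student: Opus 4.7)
The plan is to combine Corollary \ref{Corollary:StableEquivalence} with a separate treatment of the indecomposable injective objects, and then to verify that the resulting bijection preserves partition vectors. First I would reduce to the case where both rings have the same length. The case of length $1$ is trivial ($\Lambda=\Gamma=k$), and length $2$ follows from Theorem \ref{Theorem:BijectionUniserialLength2} together with the analysis below. The main case is $n=3$, where the essential input is a stable equivalence $F\colon \overline{\operatorname{mod}}\,\Lambda\xrightarrow{\cong}\overline{\operatorname{mod}}\,\Gamma$, provided by \cite[Proposition 8.11]{GKKP23}.

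For the injective indecomposables, I would use Proposition \ref{Proposition:InjectivesMono}\eqref{Proposition:InjectivesMono:2}: since $\Lambda$ is commutative uniserial of length $n$, the unique indecomposable injective $\Lambda$-module is $\Lambda$ itself, so the indecomposable injectives in $\operatorname{mono}(Q,\Lambda)$ are precisely the $f_!(\Lambda(\mathtt{i}))$ for $\mathtt{i}\in Q_0$. At a vertex $\mathtt{k}$, the partition vector of $f_!(\Lambda(\mathtt{i}))$ consists of one copy of $n$ for each path from $\mathtt{i}$ to $\mathtt{k}$; in particular, this is a combinatorial datum depending only on $(Q,n)$. The analogous statement holds for $\Gamma$, so sending $f_!(\Lambda(\mathtt{i}))\mapsto f_!(\Gamma(\mathtt{i}))$ gives a canonical bijection on injective indecomposables that preserves partition vectors.

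For the non-injective indecomposables, I would apply Theorem \ref{Theorem:MimoInverse} on both sides to identify them with the indecomposables of $\operatorname{rep}(Q,\overline{\operatorname{mod}}\,\Lambda)$ and $\operatorname{rep}(Q,\overline{\operatorname{mod}}\,\Gamma)$, respectively, with inverse given by choosing an injective-summand-free lift and applying the Mimo construction. Composing with the induced equivalence $F_*\colon \operatorname{rep}(Q,\overline{\operatorname{mod}}\,\Lambda)\xrightarrow{\cong}\operatorname{rep}(Q,\overline{\operatorname{mod}}\,\Gamma)$ yields the bijection $\operatorname{Mimo}_\Lambda(\hat{M})\leftrightarrow \operatorname{Mimo}_\Gamma(\widehat{F_*M})$. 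Since $\overline{\operatorname{mod}}\,\Lambda$ has exactly two indecomposables, of length $1$ and $2$, and the same is true for $\overline{\operatorname{mod}}\,\Gamma$, the equivalence $F$ is forced to send the length-$j$ indecomposable to the length-$j$ indecomposable for $j\in\{1,2\}$. Hence the partition of $\hat{M}_\mathtt{k}$, whose entries lie in $\{1,2\}$ by construction, agrees entrywise with that of $\widehat{F_*M}_\mathtt{k}$.

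The hardest step will be checking that the multiplicities of the length-$n$ summands contributed by the Mimo construction also match on both sides. By Definition \ref{Definition:Mimo}, this multiplicity at vertex $\mathtt{k}$ equals $\sum_{p\colon t(p)=\mathtt{k}} s_{s(p)}$, where $s_\mathtt{j}$ is the number of indecomposable summands in the injective envelope of the kernel $K_\mathtt{j}=\operatorname{Ker}\bigl(\bigoplus_{\alpha\colon t(\alpha)=\mathtt{j}} \hat{M}_{s(\alpha)}\to \hat{M}_\mathtt{j}\bigr)$, i.e.\ the $k$-dimension of the socle of $K_\mathtt{j}$. I would show that this socle dimension depends only on the stable representation $M$ (not on the chosen lift) and is preserved by $F$, by combining an $F$-equivariance for the socle functor restricted to non-injective modules with the length count $\ell(K_\mathtt{j})=\sum_\alpha \ell(\hat{M}_{s(\alpha)})-\ell(\operatorname{im})$, where both right-hand terms are controllable from stable data using that $\ell$ is determined by partition multiplicities. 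Pinning down this final compatibility is the technical heart of the argument, and is where the explicit construction of $F$ from \cite[Proposition 8.11]{GKKP23} is most needed.
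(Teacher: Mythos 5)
The paper does not actually prove this theorem: it is stated with the citation \cite[Theorem~8.13]{GKKP23} and no proof is given. Your proposal is an attempt to reconstruct the argument from the ingredients available in this survey, which is a reasonable exercise, but it has two genuine gaps.

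First, your assertion that the stable equivalence $F\colon\overline{\operatorname{mod}}\,\Lambda\to\overline{\operatorname{mod}}\,\Gamma$ ``is forced'' to send the length-$j$ indecomposable to the length-$j$ indecomposable does not follow from the information you invoke. For $\Lambda$ commutative uniserial of length~$3$, a direct computation shows that $\overline{\operatorname{mod}}\,\Lambda$ has exactly two indecomposables $S$ (length~$1$) and $M$ (length~$2$), that both have stable endomorphism ring $k$, that $\overline{\Hom}(S,M)$ and $\overline{\Hom}(M,S)$ are both one-dimensional, and that both compositions $S\to M\to S$ and $M\to S\to M$ vanish in the stable category. The resulting $k$-category is symmetric in $S$ and $M$, so there genuinely exists a stable autoequivalence swapping them; an abstract equivalence is therefore \emph{not} forced to preserve lengths. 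To make your argument work you must use the particular construction of $F$ from \cite[Proposition~8.11]{GKKP23}, not merely its existence, and verify that it matches the length-$j$ indecomposable to the length-$j$ indecomposable.

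Second, and relatedly, you explicitly leave the ``technical heart'' open: the compatibility of the Mimo-contributed multiplicities of length-$n$ summands (equivalently, of $\dim_k\operatorname{soc} K_\mathtt{j}$ for the relevant kernels) across the two sides of the bijection. You identify the right quantity to control and the right reduction (that the socle dimension of $K_\mathtt{j}$ should be recoverable from the stable representation), but the argument that it is recoverable \emph{and} preserved by $F$ is not supplied. Since this is precisely the point where one would need to open up \cite[Proposition~8.11]{GKKP23}, the proposal is an outline rather than a proof. The decomposition into ``injective indecomposables via Proposition~\ref{Proposition:InjectivesMono} plus non-injectives via Corollary~\ref{Corollary:StableEquivalence} and the Mimo-inverse'' is, however, the right skeleton, and the treatment of the injective indecomposables and of the stable-summand multiplicities (the entries $\leq n-1$ of the partitions) is correct modulo the unresolved length-preservation of $F$.
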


 \subsection{Representation type over cyclic abelian groups}

The monomorphism category is said to be of finite or infinite representation type, respectively, if it has finitely or infinitely many indecomposable representations up to isomorphism. In this subsection we classify when the monomorphism category over a cyclic abelian group is of finite or infinite representation type. This builds on \cite{GKKP23} and previous work by other authors. 

We start by investigating the behavior of representation type under field extensions.
\begin{Proposition}\label{Proposition:ExtensionOfFields}
    Let $Q$ be a finite acyclic quiver, $\Lambda$ a finite-dimensional $k$-algebra over a field $k$, and $\ell$ an algebraic and separable extension of $k$. Assume $\operatorname{mono}(Q,\Lambda)$ is of finite representation type. The following hold:
    \begin{enumerate}
        \item\label{Proposition:ExtensionOfFields:1} Any representation $M$ in $\operatorname{mono}(Q,\Lambda\otimes_k \ell)$ is a direct summand of a representation of the form $N\otimes_k\ell$ where $N\in \operatorname{mono}(Q,\Lambda)$.
        \item\label{Proposition:ExtensionOfFields:2} $\operatorname{mono}(Q,\Lambda\otimes_k \ell)$ is of finite representation type.
    \end{enumerate}
\end{Proposition}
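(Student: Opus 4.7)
The plan is to establish (1) by first reducing to a finite subextension and then invoking a separability idempotent, after which (2) drops out from (1) and Krull--Schmidt.

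For (1), given $M\in \operatorname{mono}(Q,\Lambda\otimes_k \ell)$, the first step is to observe that $M$ is finite dimensional over $\ell$, so its structure (the $\Lambda$-action and the representation maps $h_\alpha$) is encoded by finitely many scalars in $\ell$. These scalars lie in some finite subextension $k\subseteq k'\subseteq \ell$, so $M$ descends to $M'\in \operatorname{mono}(Q,\Lambda\otimes_k k')$ with $M\cong M'\otimes_{k'}\ell$; the monomorphism condition on $M'$ is equivalent to the one on $M$ by faithful flatness of $\ell/k'$. Since $\ell/k$ is algebraic and separable, $k'/k$ is automatically finite separable, and the problem reduces to this case.

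For finite separable $k'/k$, I would realize $M'$ as a direct summand of $N'\otimes_k k'$, where $N':=M'|_\Lambda$. The tool is the separability idempotent $e=\sum e_i\otimes f_i\in k'\otimes_k k'$, characterized by $\sum e_if_i=1$ and $(a\otimes 1)e=(1\otimes a)e$ for every $a\in k'$. The counit
\[
\epsilon\colon N'\otimes_k k'\twoheadrightarrow M',\qquad n\otimes a\mapsto a\cdot n
\]
is $\Lambda\otimes_k k'$-linear, and I would split it via
\[
\eta\colon M'\to N'\otimes_k k',\qquad m\mapsto \sum (e_i\cdot m)\otimes f_i.
\]
The $\Lambda$-linearity of $\eta$ is immediate since $\Lambda$- and $k'$-actions commute on $M'$; $k'$-linearity follows by applying the $k$-linear map $k'\otimes_k k'\to N'\otimes_k k'$, $x\otimes y\mapsto (x\cdot m)\otimes y$, to the separability identity; and $\epsilon\circ \eta=\operatorname{id}_{M'}$ is just $\sum e_if_i=1$. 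Base changing this splitting along $-\otimes_{k'}\ell$ exhibits $M$ as a summand of $N'\otimes_k \ell$. Setting $N:=N'$, which is finite dimensional over $k$ and lies in $\operatorname{mono}(Q,\Lambda)$ because restriction preserves the monomorphism condition, proves (1).

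Part (2) then follows from (1) and the finite type hypothesis. Let $N_1,\ldots,N_r$ be the indecomposables of $\operatorname{mono}(Q,\Lambda)$. Krull--Schmidt writes any $N\in \operatorname{mono}(Q,\Lambda)$ as a finite direct sum of $N_i$'s, so $N\otimes_k \ell$ is a finite sum of the $N_i\otimes_k \ell$; by (1), every indecomposable of $\operatorname{mono}(Q,\Lambda\otimes_k \ell)$ is a direct summand of some $N_i\otimes_k \ell$. Each $N_i\otimes_k \ell$ is finite dimensional over $\ell$, so it decomposes into finitely many indecomposables by Krull--Schmidt for the Artin $\ell$-algebra $\Lambda\otimes_k \ell$, leaving only finitely many indecomposables altogether. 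The main obstacle I anticipate is the careful bookkeeping in the separability step: one has to distinguish the external $k'$-action on the second tensor factor of $N'\otimes_k k'$ from the internal $k'$-action on $M'$ used to form $e_i\cdot m$, and verify that the separability identity in $k'\otimes_k k'$ pushes forward to exactly the identity needed for $k'$-linearity of $\eta$.
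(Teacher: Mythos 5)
Your proof is correct, and it takes a genuinely different route from the paper's. The paper establishes part~(1) by a Zorn's-lemma bootstrap: it introduces the set $\mathcal{S}$ of intermediate fields $\ell'$ for which every object of $\operatorname{mono}(Q,\Lambda\otimes_k\ell')$ is a summand of some $N\otimes_k\ell'$ with $N\in\operatorname{mono}(Q,\Lambda)$, shows $\mathcal{S}$ is closed under unions of chains (via the same ``finitely many structure constants'' descent observation you make), and then shows a maximal $\ell_0\in\mathcal{S}$ must equal $\ell$ by adjoining a single element $x$ and using that the multiplication map $\ell_0(x)\otimes_{\ell_0}\ell_0(x)\to\ell_0(x)$ splits as a bimodule map --- which is precisely the separability idempotent for the one-step extension $\ell_0(x)/\ell_0$. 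Your argument collapses this to two clean steps: descend $M$ directly to a finite subextension $k'$, then split $N'\otimes_k k'\twoheadrightarrow M'$ explicitly with the separability idempotent $e=\sum e_i\otimes f_i\in k'\otimes_k k'$. This avoids Zorn's lemma entirely and replaces the transfinite bootstrap with one application of the idempotent, at the modest cost of verifying $\Lambda\otimes_k k'$-linearity of $\eta$ by hand (which your pushforward of the identity $(a\otimes 1)e=(1\otimes a)e$ along $x\otimes y\mapsto (x\cdot m)\otimes y$ does correctly). Both proofs thus rest on the same two ingredients --- descent of a finite-dimensional module to a finite subextension, and separability expressed as a bimodule splitting --- but you combine them in a single pass where the paper runs an induction inside $\mathcal{S}$. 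Part~(2) is essentially identical to the paper's argument.
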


\begin{proof}
 
 Note that for any $N\in \operatorname{mono}(Q,\Lambda)$ and any field extension $K$ over $k$, the representation $N\otimes_{k}K$ must be finite-dimensional over $K$, and hence has finitely many non-isomorphic indecomposable summands as an object in $\operatorname{mono}(Q,\Lambda\otimes_k K)$. Therefore, the category $\operatorname{mono}(Q,\Lambda\otimes_k K)$ is of finite representation type if $K$ satisfies condition \eqref{Proposition:ExtensionOfFields:1} in the proposition, since $\operatorname{mono}(Q,\Lambda)$ is of finite representation type. Hence, if we set
 \[
 \mathcal{S}\colonequals\{\text{intermediate fields }\ell' \text{ of }\ell/k\mid \ell' \text{ satisfies condition \eqref{Proposition:ExtensionOfFields:1}} \text{ in the proposition} \}.
 \] then we only need to show that $\ell\in \mathcal{S}$.
 
 We first show that $\mathcal{S}$ has a maximal element, using Zorn's lemma. Consider a totally ordered set $\mathcal{P}$ and a subset $(\ell_i)_{i\in \mathcal{P}}\subseteq \mathcal{S}$ such that $i\leq j$ if and only if $\ell_i\subseteq \ell_j$. We want to find an element in $\mathcal{S}$ containing all the $\ell_i$'s. We claim that the field $$\ell'=\bigcup_{i\in \mathcal{P}}\ell_i$$ satisfies this. It clearly contains all the $\ell_i$'s, so we only need to show that $\ell'\in \mathcal{S}$. 
 
 Let $\Gamma$ be a finite-dimensional $k$-algebra. We first prove that any finite-dimensional $\Gamma\otimes_k\ell'$-module is of the form $N\otimes_{\ell_i}\ell'$ for some $i\in \mathcal{P}$ and some finite-dimensional $\Gamma\otimes_k\ell_i$-module $N$. Indeed, a finite-dimensional $\Gamma\otimes_k\ell'$-module can be represented as a $k$-algebra morphism $\Gamma\to M_n(\ell')$ for some integer $n$, where $M_n(\ell')$ denotes the set of $n\times n$-matrices with coefficients in $\ell'$. Now since $\ell'=\bigcup_{i\in \mathcal{P}}\ell_i$, we have that $$M_n(\ell')=\bigcup_{i\in \mathcal{P}}M_n(\ell_i).$$ Since $\Gamma$ is finite-dimensional, the image of the morphism $\Gamma\to M_n(\ell')$ must land in some $M_n(\ell_i)$. This is equivalent to the module being of the form $N\otimes_{\ell_i}\ell'$. 

 Applying this to $\Gamma=\Lambda Q$, we get that any representation in $\operatorname{mono}(Q,\Lambda\otimes_k \ell')$ is of the form $N'\otimes_{\ell_i}\ell'$ for some $i\in \mathcal{P}$ and some $N'$ in $\operatorname{mono}(Q,\Lambda\otimes_k \ell_i)$. Since $\ell_i\in \mathcal{S}$, the representation $N'$ must be a summand of  $N\otimes_k\ell_i$ for some $N\in \operatorname{mono}(Q,\Lambda)$. Hence $N'\otimes_{\ell_i}\ell'$ is a summand of $(N\otimes_k\ell_i)\otimes_{\ell_i}\ell'\cong N\otimes_k\ell'$. This shows that $\ell'\in \mathcal{S}$.

 Now by Zorn's lemma, $\mathcal{S}$ must have a maximal element, say $\ell_0$. If $\ell_0\neq \ell$, choose an element $x\in \ell\backslash\ell_0$, and consider the field extension $\ell_0(x)/\ell_0$. We want to show that $\ell_0(x)\in \mathcal{S}$, contradicting the maximality of $\ell_0$. First note that the extension $\ell_0(x)/\ell_0$ is finite and separable since $\ell/k$ is separable and algebraic. Now let $M\in \operatorname{mono}(Q,\Lambda\otimes_k\ell_0(x))$ be arbitrary, and consider the composite morphism $$M\otimes_{\ell_0}\ell_0(x)\xrightarrow{\cong} M\otimes_{\ell_0(x)}(\ell_0(x)\otimes_{\ell_0} \ell_0(x))\xrightarrow{1\otimes \phi} M\otimes_{\ell_0(x)}\ell_0(x)\cong M$$
 in $\operatorname{mono}(Q,\Lambda\otimes_k\ell_0(x))$. Here $\phi\colon \ell_0(x)\otimes_{\ell_0} \ell_0(x)\to \ell_0(x)$ is the $\ell_0(x)\otimes_{\ell_0} \ell_0(x)$-bimodule morphism given by multiplication in $\ell_0(x)$. Since $\ell_0(x)$ is a finite and separable extension of $\ell_0$, the map $\phi$ is split as a morphism of $\ell_0(x)\otimes_{\ell_0} \ell_0(x)$-bimodules. Hence, $M$ is a summand of $M\otimes_{\ell_0}\ell_0(x)$ in $\operatorname{mono}(Q,\Lambda\otimes_k\ell_0(x))$. Now let ${}_{\ell_0}|M$ be $M$ considered as a representation over $\Lambda\otimes_k\ell_0$. Then ${}_{\ell_0}|M$ must be finite-dimensional, since $\ell_0(x)$ is finite-dimensional over $\ell_0$. Hence, ${}_{\ell_0}|M$ must lie in $\operatorname{mono}(Q,\Lambda\otimes_k\ell_0)$. Since $\ell_0\in \mathcal{S}$, it follows that ${}_{\ell_0}|M$ must be a summand of $N\otimes_k\ell_0$ for some $N\in \operatorname{mono}(Q,\Lambda)$. Therefore $M\otimes_{\ell_0}\ell_0(x)$ must be a summand of $N\otimes_k\ell_0(x)$. Hence, $M$ itself is a direct summand of $N\otimes_k\ell_0(x)$. This shows that $M$ satisfies \eqref{Proposition:ExtensionOfFields:1}, and since $M$ was arbitrary it follows that $\ell_0(x)\in \mathcal{S}$. This contradicts the maximality of $\ell_0$. Hence, we must have that $\ell_0=\ell$, which proves the claim. 
\end{proof}

\begin{Remark}
Note that Proposition \ref{Proposition:ExtensionOfFields} is not true if the extension is not separable, even if $Q$ just consists of one vertex and no arrows. Indeed, let $k$ be a field of characteristic $p$ for some prime $p$, and let $\alpha$ be an element in $k$ which is not a $p$th power. Then $x^p-\alpha$ is an irreducible polynomial, so $\ell=k[x]/(x^p-\alpha)$ is a field extension of $k$, see \cite[Lemma 4.4]{Jac85}. Let $\beta$ be a $p$th root of $\alpha$ in $\ell$, so $\beta^p=\alpha$. We set $\Lambda=\ell\otimes_k\ell$. Now we have that 
\[
\ell\otimes_k\ell \cong \ell[x]/(x^p-\alpha)=\ell[x]/(x-\beta)^p\cong \ell[y]/y^p.
\]
Hence it is a nilpotent uniserial ring of length $p$. Therefore
\[
\ell\otimes_k\ell\otimes_k\ell\cong(\ell\otimes_k\ell)\otimes_\ell (\ell\otimes_k\ell)\cong \ell[y]/y^p\otimes_\ell \ell[z]/z^p\cong \ell[y,z]/(y^p,z^p)
\]
which is of infinite representation type for $p\geq 2$. Since $\ell\otimes_k\ell\cong \ell[y]/y^p$ is of finite representation type, we get that $\ell[y]/y^p$ considered as a $k$-algebra gives a counterexample to Proposition \ref{Proposition:ExtensionOfFields}.
\end{Remark}

We also need the following result on reflection functors.

\begin{Theorem}\label{Theorem:SinkSourceReflection}
   Let $Q$ be a finite acyclic quiver and $\Lambda$ a selfinjective algebra. Let $Q(v)$ be the quiver obtained from $Q$ by a sink or  source reflection at a vertex $v$. Then there is an equivalence 
   \[
   \overline{\operatorname{mono}}(Q,\Lambda) \xrightarrow{\cong} \overline{\operatorname{mono}}(Q(v),\Lambda)
   \]
between the injectively stable categories.
\end{Theorem}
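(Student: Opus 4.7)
The plan is to identify $\overline{\operatorname{mono}}(Q,\Lambda)$ with the singularity category of $\Lambda Q$ and then to invoke a BGP/APR-style derived equivalence.

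First, because $\Lambda$ is selfinjective, the indecomposable injective $\Lambda$-modules coincide with the indecomposable projective ones. Combining this with Proposition \ref{Proposition:InjectivesMono} \eqref{Proposition:InjectivesMono:2} and the fact that $f_!(P(\mathtt{i}))$ is exactly the indecomposable projective $\Lambda Q$-module at vertex $\mathtt{i}$ attached to the projective $\Lambda$-module $P$, we see that the injective objects of $\operatorname{mono}(Q,\Lambda)$ coincide with the projective $\Lambda Q$-modules. Together with Theorem \ref{Theorem:GorensteinProjMono}, this implies that $\operatorname{mono}(Q,\Lambda)=\operatorname{Gproj}(\Lambda Q)$ is a Frobenius exact category whose projective-injective objects are precisely the projective $\Lambda Q$-modules. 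Hence by Buchweitz's theorem $\overline{\operatorname{mono}}(Q,\Lambda)$ is triangle equivalent to the singularity category $D_{\operatorname{sg}}(\Lambda Q)$, and the analogous identification holds for $Q(v)$.

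Second, I would produce an APR-style tilting $\Lambda Q$-module $T$ realizing $\Lambda Q(v)$ as $\operatorname{End}_{\Lambda Q}(T)^{\operatorname{op}}$. Assuming $v$ is a source (the sink case is dual), set $T=(\Lambda Q)(1-e_v)\oplus \tau^{-1}_{\Lambda Q}(\Lambda Q\cdot e_v)$; the summand $\Lambda Q\cdot e_v$ is injective and non-projective in $\operatorname{mod}\Lambda Q$ because $v$ is a source and $\Lambda$ is selfinjective, so the classical APR recipe applies. Verifying that $T$ is tilting of projective dimension $\leq 1$ and that $\operatorname{End}_{\Lambda Q}(T)^{\operatorname{op}}\cong\Lambda Q(v)$ proceeds exactly as in the classical BGP/APR calculation, using the decomposition $\Lambda Q\cong\Lambda\otimes_R RQ$ to reduce the verification to the path-algebra case applied componentwise in $\Lambda$. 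By Happel's theorem, $T$ induces a derived equivalence $D^b(\operatorname{mod}\Lambda Q)\simeq D^b(\operatorname{mod}\Lambda Q(v))$, and since derived equivalences preserve perfect complexes this descends to a triangle equivalence of singularity categories $D_{\operatorname{sg}}(\Lambda Q)\simeq D_{\operatorname{sg}}(\Lambda Q(v))$. Chaining this with the first step yields the theorem.

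The most delicate point is identifying $\operatorname{End}_{\Lambda Q}(T)^{\operatorname{op}}$ with $\Lambda Q(v)$ on the nose rather than merely up to Morita equivalence, which becomes especially fiddly when $\Lambda$ is not local so that the vertex $v$ carries several non-isomorphic indecomposable projective-injective summands and the AR-translate has to be computed piece by piece. An alternative that stays inside monomorphism categories would be to construct a reflection functor directly on $\operatorname{mono}(Q,\Lambda)$ by combining a termwise kernel/cokernel construction at $v$ with the Mimo procedure of Subsection \ref{Subsection:MimoConstruction} to guarantee that the output lies in $\operatorname{mono}(Q(v),\Lambda)$, and then to verify by hand that it descends to the injectively stable category and is quasi-inverse to the analogous reflection going the other way.
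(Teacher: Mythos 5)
Your reduction to singularity categories is sound: with $\Lambda$ selfinjective, Proposition~\ref{Proposition:InjectivesMono}\eqref{Proposition:InjectivesMono:2} shows the injective objects of $\operatorname{mono}(Q,\Lambda)$ are precisely the projective $\Lambda Q$-modules, and combining Theorem~\ref{Theorem:GorensteinProjMono} with Buchweitz's theorem does identify $\overline{\operatorname{mono}}(Q,\Lambda)$ with $D_{\operatorname{sg}}(\Lambda Q)$. The paper itself just cites \cite{LS22}, so there is no in-paper proof to compare against; but let me flag where your second step goes wrong.

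The assertion that ``the summand $\Lambda Q\cdot e_v$ is injective and non-projective in $\operatorname{mod}\Lambda Q$'' is false on both counts: $\Lambda Q\cdot e_v$ is a direct summand of the regular module and so is projective, and for $v$ a source with outgoing arrows it is not injective (its support is large, whereas the injective at a source is concentrated at $v$). What you seem to have in mind is the situation at a \emph{sink}: there $\Lambda Q\cdot e_v$ is $\Lambda$ concentrated at $v$, which is projective and (if $v$ has incoming arrows) non-injective, and one would then consider $\tau^{-1}(\Lambda Q\cdot e_v)$. But even in the sink case the classical APR recipe does not ``apply'' as stated, because it requires the projective to be \emph{simple}; $\Lambda Q\cdot e_v$ carries the full internal $\Lambda$-module structure and is simple only when $\Lambda$ is a division ring. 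So the tilting module you write down is not justified by APR, and the identification $\operatorname{End}_{\Lambda Q}(T)^{\operatorname{op}}\cong\Lambda Q(v)$ is genuinely an open claim in your write-up, not a routine check.

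A workable version of your strategy would instead run the BGP reflection at the level of the base: the reflection tilting complex $T_0$ for $RQ\to RQ(v)$ can be defined over the commutative base $R$ by the usual cokernel/kernel formula (no simplicity is needed for this), and one must then argue that $\Lambda\otimes_R T_0$ is a tilting complex over $\Lambda Q=\Lambda\otimes_R RQ$ with endomorphism ring $\Lambda Q(v)$, taking care that the relevant $\operatorname{Hom}$- and $\operatorname{Ext}$-calculations commute with $\Lambda\otimes_R(-)$ even though $\Lambda$ need not be flat over $R$. None of this is done. Your ``alternative'' suggestion — a termwise kernel/cokernel reflection on $\operatorname{mono}(Q,\Lambda)$ repaired by the Mimo construction of Subsection~\ref{Subsection:MimoConstruction}, then shown to descend to the injectively stable category — is actually the more robust route, and is almost certainly closer to what \cite{LS22} does; I would encourage you to develop that rather than patch the APR argument.
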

\begin{proof}
    See \cite[Theorem 1.1]{LS22}. 
\end{proof}

We can now prove the classification result.

\begin{Theorem}\label{Theorem:RepTypeUniserial}
    Let $Q$ be a finite acyclic quiver. Then $\operatorname{mono}(Q,\mathbb{Z}/(p^n))$ is of finite representation type if and only if 
    \begin{enumerate}
    \item\label{Theorem:RepTypeUniserial:1} $n=2$ and $Q$ is Dynkin, or
    \item\label{Theorem:RepTypeUniserial:2} The underlying graph of $Q$ is $\mathbb{A}_m$ and $$(n,m)\in \{(3,2),(4,2),(5,2),(3,3),(3,4)\}$$
    \end{enumerate}
\end{Theorem}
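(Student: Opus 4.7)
First I would establish two preliminary reductions. Since $\mathbb{Z}/(p^n)$ has a unique indecomposable injective module, Proposition~\ref{Proposition:InjectivesMono}\eqref{Proposition:InjectivesMono:2} shows that $\operatorname{mono}(Q,\mathbb{Z}/(p^n))$ has exactly $|Q_0|$ indecomposable injective objects, so finite representation type of $\operatorname{mono}(Q,\mathbb{Z}/(p^n))$ is equivalent to finite representation type of its injectively stable category. By Theorem~\ref{Theorem:SinkSourceReflection} this stable category is invariant under source/sink reflection, hence depends only on the underlying graph of $Q$; since every graph appearing in the statement is a tree, I may replace $Q$ by any convenient orientation. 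Separately, the surjection $\mathbb{Z}/(p^n)\twoheadrightarrow \mathbb{Z}/(p^{n-1})$ induces a fully faithful exact embedding $\operatorname{mod}\mathbb{Z}/(p^{n-1})Q\hookrightarrow\operatorname{mod}\mathbb{Z}/(p^n)Q$ which restricts to $\operatorname{mono}(Q,\mathbb{Z}/(p^{n-1}))\hookrightarrow\operatorname{mono}(Q,\mathbb{Z}/(p^n))$ and preserves indecomposability, so infinite representation type is inherited upwards in $n$.

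For the finite-type direction, the case $n=2$ follows from Theorem~\ref{Theorem:BijectionUniserialLength2} applied to $\mathbb{Z}/(p^2)$ with residue field $\mathbb{F}_p$: the category $\operatorname{mono}(Q,\mathbb{Z}/(p^2))$ is of finite type if and only if $\operatorname{rep}(Q,\operatorname{mod}\mathbb{F}_p)$ is, and this happens exactly when $Q$ is Dynkin. For the five pairs $(n,m)$ in case~\eqref{Theorem:RepTypeUniserial:2}, the underlying graph is $\mathbb{A}_m$, and after reflecting to the linearly oriented $\mathbb{A}_m$, Plater's classification \cite{Pla76} for commutative artinian uniserial rings of length $n$ gives finite representation type.

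For the infinite-type direction I split into three subcases. If the underlying graph of $Q$ is non-Dynkin, Theorem~\ref{Theorem:BijectionUniserialLength2} already gives infinite type at $n=2$, and the monotonicity above propagates this to every $n\ge 2$. If the underlying graph is $\mathbb{A}_m$ with $(n,m)$ not in the listed set, reflection to the linear orientation combined with Plater gives infinite type directly. The remaining case is that $Q$ is of Dynkin type $\mathbb{D}$ or $\mathbb{E}$ and $n\ge 3$; by monotonicity it suffices to treat $n=3$. For this I chain three transfers: Lu's classification \cite[Theorem~4.6]{Lu20} gives infinite type for $\operatorname{mono}(Q,\overline{\mathbb{F}_p}[x]/(x^3))$; the contrapositive of Proposition~\ref{Proposition:ExtensionOfFields}\eqref{Proposition:ExtensionOfFields:2} (using that $\overline{\mathbb{F}_p}/\mathbb{F}_p$ is algebraic and separable) descends this to $\operatorname{mono}(Q,\mathbb{F}_p[x]/(x^3))$; and Theorem~\ref{Theorem:BijectionUniserialLength3}, applied to the two commutative uniserial length-$3$ rings $\mathbb{F}_p[x]/(x^3)$ and $\mathbb{Z}/(p^3)$ sharing the residue field $\mathbb{F}_p$, transfers the conclusion to $\operatorname{mono}(Q,\mathbb{Z}/(p^3))$.

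The hard part is precisely this final chain for the $\mathbb{D}/\mathbb{E}$ subcase. Proposition~\ref{Proposition:ExtensionOfFields} is only available in one direction (finite type ascending along a separable algebraic extension), so one must invoke it contrapositively, and Theorem~\ref{Theorem:BijectionUniserialLength3} is restricted to length at most~$3$. A direct attack at length $n\ge 4$ would require a length-$n$ analogue of this bijection (or equivalently a stable equivalence $\overline{\operatorname{mod}}\,\mathbb{Z}/(p^n)\simeq\overline{\operatorname{mod}}\,\mathbb{F}_p[x]/(x^n)$), which is not presently available; the proof is therefore structured to address only $n=3$ directly in this subcase and to obtain $n\ge 4$ via monotonicity.
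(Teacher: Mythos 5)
Your proposal is correct and follows essentially the same route as the paper: the same three transfers (Lu's classification over $\overline{\mathbb{F}_p}[x]/(x^3)$, descent along $\overline{\mathbb{F}_p}/\mathbb{F}_p$ via Proposition~\ref{Proposition:ExtensionOfFields}, and the length-$3$ bijection of Theorem~\ref{Theorem:BijectionUniserialLength3}) handle the $\mathbb{D}/\mathbb{E}$ subcase, with Theorem~\ref{Theorem:BijectionUniserialLength2} for $n=2$, reflections plus \cite{Pla76} for type $\mathbb{A}$, and upward monotonicity in $n$ via the surjection $\mathbb{Z}/(p^n)\twoheadrightarrow\mathbb{Z}/(p^{n-1})$. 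The only difference from the paper is that you make explicit, via Proposition~\ref{Proposition:InjectivesMono}, why finite representation type passes to the injectively stable category before invoking Theorem~\ref{Theorem:SinkSourceReflection} --- a step the paper leaves implicit.
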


\begin{proof}
If $n=2$, then we can use Theorem \ref{Theorem:BijectionUniserialLength2} to get that $\operatorname{mono}(Q,\mathbb{Z}/(p^2))$ is of finite representation type if and only if $Q$ is Dynkin. 

Assume $n\geq 3$. The epimorphism $\mathbb{Z}/(p^n)\to \mathbb{Z}/(p^2)$ induces a natural inclusion $$\operatorname{mono}(Q,\mathbb{Z}/(p^2))\to \operatorname{mono}(Q,\mathbb{Z}/(p^n))$$ which preserves indecomposables. Hence, $Q$ must be Dynkin if $\operatorname{mono}(Q,\mathbb{Z}/(p^n))$ is of finite representation type. Assume the underlying graph of $Q$ is Dynkin of type $\mathbb{D}_m$ or $\mathbb{E}_6,\mathbb{E}_7,\mathbb{E}_8$. By Theorem \ref{Theorem:BijectionUniserialLength3} there is a bijection between the indecomposables in  $\operatorname{mono}(Q,\mathbb{Z}/(p^3))$ and in $\operatorname{mono}(Q,k[x]/(x^3))$, where $k=\mathbb{Z}/(p)$ is the finite field with $p$ elements. By \cite[Theorem 4.6]{Lu20} the category $\operatorname{mono}(Q,\overline{k}[x]/(x^3))$ is of infinite representation type, where $\overline{k}$ is the algebraic closure of $k$. Since $k$ is a finite field, it is perfect, so $\overline{k}$ is a separable extension of $k$. Hence, by Proposition \ref{Proposition:ExtensionOfFields} \eqref{Proposition:ExtensionOfFields:2} the category $\operatorname{mono}(Q,k[x]/(x^3))$ must also be of infinite representation type. Therefore, $\operatorname{mono}(Q,\mathbb{Z}/(p^3))$ is of infinite representation type, so $\operatorname{mono}(Q,\mathbb{Z}/(p^n))$ is of infinite representation type.

Now assume the underlying graph of $Q$ is Dynkin of type $\mathbb{A}_n$. Since $\mathbb{Z}/(p^n)$ is commutative uniserial, it must be selfinjective. Hence, the representation type of $\operatorname{mono}(Q,\mathbb{Z}/(p^n))$ does not depend on the orientation of $Q$ by Theorem \ref{Theorem:SinkSourceReflection}. We can therefore assume $Q$ is a linearly oriented $\mathbb{A}_m$-quiver. Then by \cite{Pla76} (see also \cite[Theorem 1.3]{Sim02}) the category $\operatorname{mono}(Q,\mathbb{Z}/(p^n))$ is of finite representation type if and only if $(n,m)\in\{(3,2),(4,2),(5,2),(3,3),(3,4)\}$.  
\end{proof}

	\section{Valuated groups}\label{Section:ValuatedGroups}
	
	\subsection{Valuated groups and embeddings of subgroups}

 The height of an element plays an important role in abelian group theory, for example in Ulm's and Prüfer theorems on classification of certain infinite abelian groups. The data of the height can be recovered from the data of the $p$\textit{-height} for all primes $p$. The $p$-height $h(x)$ of an element $x$ in an abelian group $G$ is the largest integer $n\geq 0$ such that $p^ny=x$ for an element $y$ in $G$, and $\infty$ otherwise. 
 
 Let $G$ be a bounded abelian $p$-group $G$, i.e. each element have order $p^n$ for some $n$. Let $H$ be a subgroup of $G$. The $p$-height function $h|_H$ of $G$ restricted to $H$ provides an important invariant of the embedding of $H$ into $G$. In fact, it determines left minimal embeddings up to isomorphism, see \cite[Theorem 1.2]{HRW84}. The $p$-valuation of an abelian group, introduced in \cite{RW79}, is an axiomatization of the properties of $h|_H$. In this subsection we review the theory of $p$-valuations, following \cite{RW79}. We also explain how the category of $p$-valuated groups is related to the submodule category $\operatorname{Sub}(\mathbb{Z}/(p^n))$. 
	\begin{enumerate}
		\item  A $p$\textit{-valuation} on an abelian group $B$ is a function $$v\colon B\to \mathbb{Z}_{\geq 0}\cup \{\infty\}$$ satisfying for all $x,y\in B$:
		\begin{enumerate}
			\item $v(x+y)\geq \operatorname{min}\{v(x),v(y)\}$ .
			\item $v(px)>v(x)$ (where we assume $\infty>\infty$ and $\infty>n$ for all $n\in \mathbb{Z}_{\geq 0}$).
		\end{enumerate}
		\item A $p$-\textit{valuated} abelian group is an abelian group $B$ equipped with a $p$-valuation $v\colon B\to \mathbb{Z}_{\geq 0}\cup \{\infty\}$. 
		\item  A morphism $\phi\colon B\to B'$ of $p$-valuated abelian groups is a morphism of abelian groups satisfying $v(x)\leq v(\phi(x))$ for all $x\in B$.
  \item We let $\mathcal{V}_p^n$ denote the category with objects $p$-valuated abelian groups $B$ satisfying $v(x)\leq n-1$ for all nonzero $x\in B$, and with morphisms as above.
	\end{enumerate}	
 Since $p\cdot 0=0$ and $v(p\cdot x)>v(x)$, we must have that $v(0)=\infty$. Note that $0$ is the only element with valuation $\geq n$ in a $p$-valuated group $B$ in $\mathcal{V}_p^n$. Hence, all elements of $B$ must be $p^n$-torsion, i.e. $p^n\cdot B=0$. In particular, $B$ is $p$-local in the sense of \cite{RW79}, i.e. multiplication by a prime different from $p$ is an automorphism of $B$.
 
	We have the following alternative description for $\mathcal{V}_p^n$, which is used in \cite{RW99}. 

\begin{Lemma}\label{Lemma:AltDescriptionValuatedGroup}
	The category $\mathcal{V}_p^n$ is isomorphic to the category where
 \begin{itemize} 
 \item The objects are given by increasing sequence $(B(i))_{0\leq i\leq n}$ of abelian groups
			\[
			0=B(n)\subseteq B(n-1)\subseteq \cdots \subseteq B(1)\subseteq B(0)
			\]
			satisfying $p\cdot B(i)\subseteq B(i+1)$ for $0\leq i\leq n-1$.
   \item The morphisms are given by tuples $(\phi(i)\colon B(i)\to B'(i))_{0\leq i\leq n}$ of morphisms of abelian groups making the diagrams
			\[
			\begin{tikzcd}
			B(i)\arrow{r}{\phi(i)}\arrow{d}{\subseteq} &B'(i)\arrow{d}{\subseteq}\\
			B(i-1)\arrow{r}{\phi(i-1)} &B'(i-1)
			\end{tikzcd}
			\]
			commutative for all $1\leq i\leq n-1$. 
   \end{itemize}
	\end{Lemma}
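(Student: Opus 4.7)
The plan is to construct mutually inverse isomorphisms between $\mathcal{V}_p^n$ and the filtered category described in the statement, essentially by encoding a $p$-valuation as its family of sub-level sets.

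First I would define a functor $F\colon \mathcal{V}_p^n \to \mathcal{V}'$ (where $\mathcal{V}'$ denotes the filtered category) by
\[
F(B,v) = \bigl(B(i)\bigr)_{0\leq i\leq n}, \quad B(i) \colonequals \{x\in B \mid v(x)\geq i\}.
\]
I would check that each $B(i)$ is a subgroup using axiom (a) (together with $v(-x)=v(x)$, which follows from (a) and $v(0)=\infty$), that $B(0)=B$ and $B(n)=0$ (the latter because elements with $v(x)\geq n$ must be zero by hypothesis on $\mathcal{V}_p^n$), that the sequence is increasing, and that $p\cdot B(i)\subseteq B(i+1)$ is exactly a restatement of axiom (b). For a morphism $\phi\colon (B,v)\to (B',v')$, the condition $v(x)\leq v'(\phi(x))$ means $\phi(B(i))\subseteq B'(i)$ for every $i$, so setting $\phi(i) \colonequals \phi|_{B(i)}$ yields a morphism in $\mathcal{V}'$ and makes all the required squares commute.

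Next I would define the candidate inverse $G\colon \mathcal{V}'\to \mathcal{V}_p^n$. Given $(B(i))_i$, set $B \colonequals B(0)$ and define
\[
v(x) \colonequals \sup\{i\in\{0,1,\dots,n\} \mid x\in B(i)\},
\]
with the convention that the supremum equals $\infty$ exactly when $x=0$ (since $B(n)=0$). I would then verify axiom (a) from the fact that the $B(i)$ are subgroups, and axiom (b) from the containment $p\cdot B(i)\subseteq B(i+1)$: if $v(x)=i<\infty$ then $px\in B(i+1)$, hence $v(px)\geq i+1>v(x)$; the case $v(x)=\infty$ (i.e.\ $x=0$) is immediate. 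A morphism $(\phi(i))_i$ in $\mathcal{V}'$ is assembled into a single group homomorphism $\phi \colonequals \phi(0)$; the commuting squares ensure that $\phi(B(i))\subseteq B'(i)$ for all $i$, which translates into $v'(\phi(x))\geq v(x)$.

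Finally I would check that $F$ and $G$ are mutually inverse on objects and morphisms. For objects this reduces to the tautology
\[
\{x\in B \mid v(x)\geq i\} = B(i) \iff v(x) = \sup\{i \mid x\in B(i)\},
\]
and for morphisms it is immediate once one observes that in both formulations all the structure of a morphism is encoded by the underlying map on $B=B(0)$. The only subtlety, and the one step where I would be most careful, is the handling of the value $\infty$ and of the endpoints $i=0$ and $i=n$: one must ensure that $v(0)=\infty$ is consistent with $B(n)=0$ and that requiring $v(x)\leq n-1$ for nonzero $x$ matches exactly the condition $B(n)=0$. Apart from this bookkeeping, no real obstacle appears — the lemma is a straightforward translation between valuations and their induced filtrations.
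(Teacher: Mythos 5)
Your proof is correct and takes essentially the same approach as the paper: both directions are given by sending a valuated group to its filtration by sub-level sets $B(i)=\{x\mid v(x)\geq i\}$, and conversely reading off the valuation from such a filtration, with restriction/assembly of morphisms on either side. The paper states this more tersely and leaves the verifications implicit, but the construction is the same.
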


 \begin{proof}
 Given an object $B$ in $\mathcal{V}_p^n$, we get an increasing sequence of abelian groups
		\[
		0=B(n)\subseteq B(n-1)\subseteq \cdots \subseteq B(1)\subseteq B(0)
		\]
as in the lemma by setting $B(i)\colonequals \{x\mid v(x)\geq i\}.$ 
 Given a morphism $\phi\colon B\to B'$ in $\mathcal{V}_p^n$, we get a tuple $(\phi(i)\colon B(i)\to B'(i))_{0\leq i\leq n-1}$ of morphisms as in the lemma by letting $\phi(i)$ be the restriction of $\phi$ to $B(i)$. Note that this is well-defined since $v(x)\leq v(\phi(x))$ for all $x\in B$. Hence, we have a functor from $\mathcal{V}_p^n$ to the category described in the lemma. We claim that this is an isomorphism. Indeed, its inverse sends an increasing sequence $(B(i))_{0\leq i\leq n}$ of abelian groups to the abelian group $B(0)$ with $p$-valuation given by $v(0)=\infty$ and $v(x)=i$ if $0\neq x\in B(i)$ and $x\notin B(i+1)$.
	\end{proof}
From now on we will use the two descriptions of $\mathcal{V}_p^n$ interchangeably.	
	
	Recall that a morphism $f$ is called a kernel (resp. cokernel) if there exists a morphism $g$ such that $f$ is the kernel of $g$ (resp. cokernel of $g$).
	A \textit{quasi-abelian category} is an additive category with kernels and cokernels, such that %the pushout of a kernel along any morphism is still a kernel, and such that the pullback of a cokernel along any morphism is still a cokernel. This is equivalent to requiring that all 
	its kernel-cokernel pairs form an exact structure in the sense of Quillen, see \cite[Section 4]{Bue10} for more details. We refer to \cite{Bue10} or \cite[Appendix A]{Kel90} for an introduction to exact categories. %For more details on the history of quasi-abelian categories see Section 2 in \cite{Rum08}.
	
	\begin{Theorem}\label{Theorem:ExactStructureValuatedGroups}
		The category $\mathcal{V}_p^n$ is quasi-abelian. Furthermore:
		\begin{enumerate}
			\item\label{Theorem:ExactStructureValuatedGroups:1} If $\phi\colon B\to B'$ is a morphism in $\mathcal{V}_p^n$, then the following are equivalent:
			\begin{enumerate}
				\item\label{Theorem:ExactStructureValuatedGroups:1a} $\phi$ is a kernel.
				\item\label{Theorem:ExactStructureValuatedGroups:1b} $\phi$ is injective and $v(\phi(x))=v(x)$ for all $x\in B$.
				\item\label{Theorem:ExactStructureValuatedGroups:1c} $\phi$ is injective and \begin{equation}\label{Equation:CartesianSquare}
				\begin{tikzcd}
				B(i)\arrow{r}{\phi(i)}\arrow{d}{} &B'(i)\arrow{d}{}\\
				B(0)\arrow{r}{\phi(0)} &B'(0)
				\end{tikzcd}
				\end{equation}  
				is a pullback square for each $0\leq i\leq n$. 
			\end{enumerate}
			\item\label{Theorem:ExactStructureValuatedGroups:2} If $\phi\colon B\to B'$ is a morphism in $\mathcal{V}_p^n$, then the following are equivalent:
			\begin{enumerate}
				\item\label{Theorem:ExactStructureValuatedGroups:2a} $\phi$ is a cokernel.
				\item\label{Theorem:ExactStructureValuatedGroups:2b} $\phi$ is surjective and for any $x'\in B'$ we can find $x\in B$ satisfying $$\phi(x)=x' \quad \text{and} \quad v(x)=v(x').$$
\item\label{Theorem:ExactStructureValuatedGroups:2c} $\phi(i)\colon B(i)\to B'(i)$ is surjective for each $0\leq i\leq n$.
			\end{enumerate}
			\item\label{Theorem:ExactStructureValuatedGroups:3} A sequence $B\xrightarrow{\phi}B'\xrightarrow{\psi}B''$ in $\mathcal{V}_p^n$ is a kernel-cokernel pair if and only if $$0\to B(i)\xrightarrow{\phi(i)}B'(i)\xrightarrow{\psi(i)}B''(i)\to 0$$ is a short exact sequence of abelian groups for all $0\leq i\leq n$.
		\end{enumerate}
	\end{Theorem}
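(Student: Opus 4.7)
I would work throughout with the filtration description from Lemma \ref{Lemma:AltDescriptionValuatedGroup}, since it converts statements about $p$-valuations into statements about short exact sequences of abelian groups. For a morphism $\phi\colon B\to B'$ in $\mathcal{V}_p^n$, I would construct the kernel by $K(i) := \ker(\phi(0))\cap B(i)$ with inclusions inherited from $B$, and the cokernel by $C(i) := B'(i)/(B'(i)\cap \phi(B(0)))$ with the maps $C(i)\to C(i-1)$ induced by $B'(i)\subseteq B'(i-1)$; the latter are injective, so $C$ really is a filtration. The condition $p\cdot X(i)\subseteq X(i+1)$ for $X\in\{K,C\}$ is immediate since passage to subgroups, quotients, and $\phi$ all commute with multiplication by $p$, and the universal properties follow from the fact that morphisms in $\mathcal{V}_p^n$ are precisely filtration-preserving abelian-group morphisms.

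For \eqref{Theorem:ExactStructureValuatedGroups:1}, $\phi$ is a kernel iff the canonical map $B\to \ker(\operatorname{coker}(\phi))$ is an isomorphism; since $\ker(\operatorname{coker}(\phi))(i) = B'(i)\cap \phi(B(0))$, this amounts to $\phi$ being injective with $\phi(B(i)) = B'(i)\cap \phi(B(0))$, which is exactly the pullback condition (1c). The equivalence (1b)$\Leftrightarrow$(1c) unpacks as follows: the identity $B(i) = \phi^{-1}(B'(i))$ says $v(\phi(x))\geq i \iff v(x)\geq i$, which combined with the standing inequality $v(x)\leq v(\phi(x))$ forces $v(x) = v(\phi(x))$. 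A dual analysis handles \eqref{Theorem:ExactStructureValuatedGroups:2}: $\psi$ is a cokernel iff $B'(i)/\ker(\psi(i))\to B''(i)$ is bijective for each $i$, which is exactly surjectivity of each $\psi(i)$; (2b)$\Leftrightarrow$(2c) follows by choosing a preimage of $x'\in B'$ from $B(v(x'))$ and upgrading $v(x)\geq v(x')$ to equality via $v(x)\leq v(\phi(x))$. For \eqref{Theorem:ExactStructureValuatedGroups:3}, combining (1c), (2c), and $\phi(B(0)) = \ker(\psi(0))$ gives $\phi(B(i)) = \phi(B(0))\cap B'(i) = \ker(\psi(0))\cap B'(i) = \ker(\psi(i))$, producing pointwise short exactness; the converse is immediate from reading these identities backwards.

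Finally, to see that $\mathcal{V}_p^n$ is quasi-abelian it remains to verify that the kernel-cokernel pairs form an exact structure. By \eqref{Theorem:ExactStructureValuatedGroups:3}, exactness is a pointwise statement in $\operatorname{Ab}$, so closure under composition and direct sums is inherited from the corresponding fact for abelian groups. The substantive axiom is that pushouts of admissible monics along arbitrary morphisms exist and are again admissible monics, and dually for pullbacks of admissible epics --- this will be the main technical obstacle. My plan is to compute such pushouts componentwise in $\operatorname{Ab}$ and then verify that the result really lies in $\mathcal{V}_p^n$: the maps $P(i)\to P(i-1)$ stay injective by a diagram chase using the pullback condition (1c) for the kernel being pushed out, the induced morphism into the pushout again satisfies (1c) by a similar chase, and the $p$-condition $p\cdot P(i)\subseteq P(i+1)$ is automatic because pushouts in $\operatorname{Ab}$ commute with multiplication by $p$. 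Pullbacks of cokernels are handled symmetrically using (2c).
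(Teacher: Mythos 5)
Your proof is correct, but it takes a genuinely different route from the paper. The paper outsources nearly everything to the literature: it cites \cite[Theorem 3]{RW79} for the existence and description of kernels and cokernels (making $\mathcal{V}_p^n$ preabelian), cites \cite{SW11} for the existence of a maximal exact structure on any preabelian category with inflations given by the \emph{stable} kernels, and cites \cite[Theorem 6]{RW79} to identify the stable kernels with the injective value-preserving morphisms; since all kernels already satisfy this description, the paper concludes that all kernels are stable, so the maximal exact structure is the class of all kernel-cokernel pairs and $\mathcal{V}_p^n$ is quasi-abelian. You instead construct kernels and cokernels explicitly in the filtration picture ($K(i)=\ker(\phi(0))\cap B(i)$ and $C(i)=B'(i)/(B'(i)\cap\phi(B(0)))$), derive the characterizations in parts (1)--(3) directly from $\phi\cong\ker(\operatorname{coker}\phi)$ and $\psi\cong\operatorname{coker}(\ker\psi)$, and then plan to verify the Quillen axioms by computing pushouts of admissible monics (and pullbacks of admissible epics) componentwise in $\operatorname{Ab}$ and checking via the pullback condition (1c) that the result is again an object of $\mathcal{V}_p^n$ with an admissible monic into it. Your approach is entirely self-contained and elementary, which is a real gain for a reader unfamiliar with \cite{RW79,SW11}, but it costs you several explicit diagram chases (the pushout/pullback verifications are only sketched and would need to be carried out carefully); the paper's approach is much shorter at the price of leaning on nontrivial external results. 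One small thing worth making explicit in your write-up: your cokernel filtration $C(i)=B'(i)/(B'(i)\cap\phi(B(0)))$ is the \emph{minimal} filtration compatible with the quotient map being a morphism, and this is forced by the universal property (take $D=C$ with the minimal filtration and $\psi$ the quotient); dually the kernel filtration $K(i)=\ker(\phi(0))\cap B(i)$ is maximal. Spelling out this min/max reasoning would tighten the argument.
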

	
	\begin{proof}
		By \cite[Theorem 3]{RW79} the category $\mathcal{V}_p^n$ is preabelian, i.e. has kernels and cokernels. Following \cite{RW77}, a kernel in a preabelian category is called semi-stable if its pushout along any morphism is still a kernel, and a cokernel is called semi-stable if its pullback along any morphism is still a cokernel. Furthermore, a semi-stable kernel is called stable if its cokernel is semi-stable, and a semi-stable cokernel is called stable if its kernel is semi-stable.  By \cite{SW11} any preabelian category has a maximal exact structure where the inflations are the stable kernels and the deflations are the stable cokernels. In particular this holds true for $\mathcal{V}_p^n$. Since the stable cokernels are precisely the cokernels of the stable kernels, it suffices to show that all kernels in $\mathcal{V}_p^n$ are stable in order to prove that $\mathcal{V}_p^n$ is quasi-abelian.
		
		By \cite[Theorem 6]{RW79} the stable kernels in $\mathcal{V}_p^n$ are the morphisms $\phi\colon B\to B'$ which are injective and satisfy $v(\phi(x))=v(x)$ for all $x\in B$. The latter condition corresponds precisely to the equality $B(i)=B(0)\cap B'(i)$ for each $0\leq i\leq n$, i.e. that the diagrams in \eqref{Theorem:ExactStructureValuatedGroups:1} are pullback squares. This shows that the morphisms described in \eqref{Theorem:ExactStructureValuatedGroups:1b} and \eqref{Theorem:ExactStructureValuatedGroups:1c} are the same, and that they coincide with the stable kernels. Finally, any kernel in $\mathcal{V}_p^n$ must satisfy \eqref{Theorem:ExactStructureValuatedGroups:1b} by the description in \cite[Theorem 3]{RW79}.  This shows \eqref{Theorem:ExactStructureValuatedGroups:1} and that $\mathcal{V}_p^n$ is quasi-abelian.
		
		The equivalence between \eqref{Theorem:ExactStructureValuatedGroups:2a} and \eqref{Theorem:ExactStructureValuatedGroups:2b} follows immediately from the characterization of kernels and cokernels in \cite[Theorem 3]{RW79}. It can also easily be checked that \eqref{Theorem:ExactStructureValuatedGroups:2b} and \eqref{Theorem:ExactStructureValuatedGroups:2c} are equivalent. This proves \eqref{Theorem:ExactStructureValuatedGroups:2}.
		
		For \eqref{Theorem:ExactStructureValuatedGroups:3}, assume we have a sequence $$B\xrightarrow{\phi}B'\xrightarrow{\psi}B''$$
		of $p$-valuated abelian groups in $\mathcal{V}_p^n$. This is a kernel-cokernel pair if and only if $\psi$ is a cokernel in $\mathcal{V}_p^n$ and $\phi$ is the kernel of $\psi$. Now from \eqref{Theorem:ExactStructureValuatedGroups:2} we know that $\psi$ is a cokernel if and only if $\psi(i)$ is surjective for all $0\leq i\leq n$, and by \cite[Theorem 3]{RW79} we know that $\phi$ is the kernel of $\psi$ if and only if $\phi(i)$ is the kernel of $\psi(i)$ for each $0\leq i\leq n$. Hence $(\psi,\phi)$ form a kernel-cokernel pair in $\mathcal{V}_p^n$ if and only if
		$$0\to B(i)\xrightarrow{\phi(i)}B'(i)\xrightarrow{\psi(i)}B''(i)\to 0$$ 
		is a short exact sequence of abelian groups for all $0\leq i\leq n$. This proves \eqref{Theorem:ExactStructureValuatedGroups:3}.
	\end{proof}
	
	\begin{Remark}
		Note that $\mathcal{V}_p^n$ is not in general abelian, as described in the paragraph after Theorem 3 in \cite{RW79}. To see this, use the description of $\mathcal{V}_p^n$ in Lemma \ref{Lemma:AltDescriptionValuatedGroup}. Consider the objects $B=(B(i))_{0\leq i\leq n}$ and $B=(B(i))_{0\leq i\leq n}$ given by
  \begin{align*}
    & B(0)=B(1)=\mathbb{Z}/(p) \quad \text{and} \quad B(i)=0 \text{ for }2\leq i\leq n \\
  &  B'(0)=\mathbb{Z}/(p) \quad \text{and} \quad B'(i)=0 \text{ for }1\leq i\leq n
  \end{align*}
  so we get increasing sequences
  \begin{align*}
  & B= (0\subseteq 0\subseteq\cdots \subseteq 0\subseteq \mathbb{Z}/(p)\subseteq \mathbb{Z}/(p)), \quad B'=(0\subseteq 0\subseteq\cdots \subseteq 0\subseteq 0\subseteq\mathbb{Z}/(p))
   \end{align*}
  of abelian groups. Let $\phi\colon B'\to B$ be the morphism given by $\phi(0)=\operatorname{id}_{\mathbb{Z}/(p)}$ and $\phi(i)=0$ for $1\leq i\leq n$. Then it follows from the description in \cite[Theorem 3]{RW79} that $\phi$ has zero kernel and cokernel. But $\phi$ is not an isomorphism, so $\mathcal{V}_p^n$ can't be abelian 
  %let $G$ a $p$-torsion abelian group, e. and consider the canonical morphism $\phi\colon B\to B'$ where 
		%\begin{align*}
		%	& B(0)=B(1)=G \quad \text{and} \quad B(i)=0 \text{ for }2\leq i\leq n-1. \\
		%	& B'(0)=G \quad \text{and} \quad B'(i)=0 \text{ for }1\leq i\leq n-1.
		%\end{align*}
	%	Then it follows from the description in \cite[Theorem 3]{RW79} that $\phi$ has zero kernel and cokernel. But $\phi$ is not an isomorphism, so $\mathcal{V}_p^n$ can't be abelian.
\end{Remark}
	
	\begin{Remark}
	Note that the category $\mathcal{V}_p$ of $p$-local valuated abelian groups studied in \cite{RW79} is not quasi-abelian. Indeed, in \cite{RW79} they show that what they call embeddings are the kernels in $\mathcal{V}_p$, that the embeddings they call nice are the stable kernels in $\mathcal{V}_p$ \cite[Theorem 6]{RW79}, and that there exists embeddings which are not nice (see last paragraph of Section 3 in \cite{RW79}). Note that embeddings and nice embeddings are the same in $\mathcal{V}_p^n$.
	\end{Remark}

	Next we recall the characterization of injective objects in $\mathcal{V}_p^n$ from \cite{RW79}. Following the terminology of \cite[Appendix A]{Kel90}, we call a kernel in $\mathcal{V}_p^n$ for an \textit{inflation}, a cokernel for a \textit{deflation}, and a kernel-cokernel pair for a \textit{conflation}. Recall that the $p$-height $h(x)$ of an element $x$ in $B$ is the largest integer $n\geq 0$ such that there exists $y\in B$ satisfying $p^ny=x$. Otherwise we say the height is $\infty$. Note that $h(0)=\infty$
	\begin{Proposition}\label{ValuatedGroupsEnoughInjectives}
		As an exact category, $\mathcal{V}_p^n$ has enough injectives. Furthermore, the following are equivalent for an object $B$ in $\mathcal{V}_p^n$:
		\begin{enumerate}
			\item\label{ValuatedGroupsEnoughInjectives:1} $B$ is injective in the exact structure of $\mathcal{V}_p^n$.
			\item\label{ValuatedGroupsEnoughInjectives:2} $v(x)=h(x)$ for all $x\in B$.
			\item\label{ValuatedGroupsEnoughInjectives:3} $B(i)=p^i\cdot B$ for each $0\leq i\leq n$.
		\end{enumerate}
	\end{Proposition}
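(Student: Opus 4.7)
The plan is to split the proof into three stages: the equivalence $(2)\Leftrightarrow (3)$, essentially a translation of definitions; the construction of enough injectives by inflating into objects with $v=h$; and a purity argument establishing $(2)\Rightarrow (1)$, with the converse $(1)\Rightarrow (2)$ then following by a retract computation.

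For $(2)\Leftrightarrow (3)$, I would first note that $v(x)\geq h(x)$ holds automatically, since the axiom $v(py)>v(y)$ gives inductively $v(p^ky)\geq k$, so any factorization $x=p^ky$ forces $v(x)\geq k$. Equivalently, $p^iB\subseteq B(i)$ for every $i$, which also follows by iterating $pB(j)\subseteq B(j+1)$. The condition $v=h$ is then precisely the reverse inclusion $B(i)\subseteq p^iB$.

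For enough injectives, I would construct, for each $B\in\mathcal{V}_p^n$, an explicit inflation $\iota\colon B\hookrightarrow I$ into an object with $v_I=h_I$, which by the purity argument below is injective. The idea is to embed $B$ into a direct sum $I=\bigoplus_\ell\mathbb{Z}/(p^{n_\ell})$ with the height valuation, choosing the summands so that each generator of $B$ and its $p$-multiples realize their prescribed valuations as heights in $I$; this is a standard Ulm-type realization in bounded abelian $p$-groups, and the bound $v(x)\leq n-1$ guarantees $n_\ell\leq n$.

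For $(2)\Rightarrow (1)$, I would show that every conflation $I\to E\to Q$ with $v_I=h_I$ splits. Theorem \ref{Theorem:ExactStructureValuatedGroups}\eqref{Theorem:ExactStructureValuatedGroups:3} gives a level-$0$ short exact sequence $0\to I\to E(0)\to Q(0)\to 0$ in $\operatorname{Ab}$; combined with $p^iE(0)\subseteq E(i)$ and the inflation identity $I\cap E(i)=I(i)=p^iI$, one obtains $I\cap p^iE(0)\subseteq p^iI$, so $I$ is a pure subgroup of the bounded abelian $p$-group $E(0)$, and Pr\"ufer--Kulikov gives $E(0)=I\oplus C$. The main obstacle will be to choose $C$ compatibly with the filtration so that the projection $\pi\colon E(0)\to I$ sends $E(i)$ into $p^iI$ for every $i$; I expect this can be arranged by an inductive construction along the valuation filtration, using the purity of $p^iI$ inside $E(i)$ at each level. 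Conversely, an injective $B$ is a retract of the inflation above; writing $s$ for the inflation and $r$ for the retraction, the inflation property gives $v_B(x)=h_I(s(x))$, and any factorization $s(x)=p^ky$ in $I$ yields $x=p^kr(y)$ in $B$, so $h_B(x)\geq v_B(x)$ and hence $v_B=h_B$.
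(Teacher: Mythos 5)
The paper's proof of the key implication $(2)\Rightarrow(1)$ is a one-liner: a $p$-valuated group with $v=h$ has $p^n$-bounded, hence algebraically compact, underlying group, and \cite[Theorem 9]{RW79} then asserts that every inflation out of such an object splits. Your approach replaces this citation with a from-scratch purity argument, and while the first half of it is sound, it has a genuine gap at exactly the point where the real content lives. You correctly observe that an inflation $I\hookrightarrow E$ with $v_I=h_I$ makes $I$ a pure subgroup of the bounded group $E(0)$, so Prüfer--Kulikov gives an abelian-group complement $C$ and a retraction $\pi\colon E(0)\to I$. But the retraction you need must be a morphism in $\mathcal{V}_p^n$, i.e.\ it must satisfy $\pi(E(i))\subseteq I(i)=p^iI$ for all $i$, and nothing in the Prüfer--Kulikov construction guarantees this. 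You flag this yourself ("I expect this can be arranged by an inductive construction along the valuation filtration, using the purity of $p^iI$ inside $E(i)$"), but this is precisely the nontrivial step, not a routine adjustment: the groups $E(i)$ are merely subgroups of $E(0)$, not bounded $p$-groups in their own right in a way that feeds directly into Prüfer--Kulikov, and it is not clear that $p^iI$ is pure in $E(i)$, nor that a level-by-level correction of $C$ converges to a single valuated splitting. In effect you would be re-proving \cite[Theorem 9]{RW79}, and the sketch as written does not do so.

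The remaining parts are fine and essentially match the paper. The equivalence $(2)\Leftrightarrow(3)$ via $p^iB\subseteq B(i)$ is the intended translation. For enough injectives, the paper cites \cite[Theorem 2]{RW79}; your sketch of embedding into a direct sum of cyclic groups with height valuation is the idea behind that theorem, though again making the embedding an honest inflation (valuations realized exactly as heights, compatibly across sums of $p$-multiples of generators) takes care. Your argument for $(1)\Rightarrow(2)$ — injective implies retract of an object with $v=h$, and a retract of such an object again has $v=h$ because $x=p^k r(y)$ whenever $s(x)=p^k y$ — is exactly the paper's argument. So the proposal is a reasonable plan, but as it stands $(2)\Rightarrow(1)$ is not proved; either close the filtration-compatibility gap in the splitting construction, or, as the paper does, invoke the algebraic-compactness splitting theorem directly.
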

	
	\begin{proof}
		Clearly \eqref{ValuatedGroupsEnoughInjectives:2} and \eqref{ValuatedGroupsEnoughInjectives:3} are equivalent. In \cite{RW79} they call a $p$-valuated group satisfying the condition $v(x)=h(x)$ for all $x\in B$ for a \textit{group}. Assume $B$ satisfies this condition. Since the underlying abelian group of $B$ is $p^n$-torsion, it must be algebraically compact. Therefore, by \cite[Theorem 9]{RW79} any inflation $B\to B''$ from $B$ must be split. This shows that any object satisfying \eqref{ValuatedGroupsEnoughInjectives:2} must be injective in $\mathcal{V}_p^n$. Since any object $B'$ in $\mathcal{V}_p^n$ has an inflation into a group (in the sense above)  by \cite[Theorem 2]{RW79}, this shows that $\mathcal{V}_p^n$ has enough injectives. Finally, any injective object in $\mathcal{V}_p^n$ must have an inflation into a group, and by injectivity it must be a split monomorphism. Therefore any injective object is summand of a group. But clearly any summand of a group is still a group. This implies that any injective object is a group, which proves the claim. 
	\end{proof}
	
	We now  explain how $\mathcal{V}_p^n$ and $\operatorname{Sub}(\mathbb{Z}/(p^n))$ are related. Recall that $\operatorname{Sub}(\mathbb{Z}/(p^n))$ consists of all monomorphic representation of $\mathtt{1}\to \mathtt{2}$ over $\mathbb{Z}/(p^n)$, not just the finitely generated ones.  Define a functor \begin{equation}\label{FunctorPhi}
	\Phi\colon \operatorname{Sub}(\mathbb{Z}/(p^n))\to \mathcal{V}_p^n
	\end{equation}
	as follows:
	\begin{itemize}
		\item For each object $M=(M_1\subseteq M_2)$ in $\operatorname{Sub}(\mathbb{Z}/(p^n))$, let $\Phi(M)$ be the abelian group $M_1$ with the $p$-valuation given by the restriction of the $p$-height of $M_2$, i.e. 
  $$v(x)=\operatorname{sup}\{n\geq 0\mid \text{there exists }y\in M_2 \text{ with }p^ny=x\}$$
  for $x\in M_1$.
		\item For each morphism in $\operatorname{Sub}(\mathbb{Z}/(p^n))$ $$g=(g_1,g_2)\colon (M_1\subseteq M_2)\to (N_1\subseteq N_2)$$ let $\Phi(g)\colon M_1\to N_1$ be the map $g_1$ considered as a morphism in $\mathcal{V}_p^n$.
	\end{itemize}
Since the diagram
 \[
			\begin{tikzcd}
			M_1\arrow{r}{g_1}\arrow{d}{\subseteq} &N_1\arrow{d}{\subseteq}\\
			M_2\arrow{r}{g_2} &N_2
			\end{tikzcd}
			\]
   is commutative, $g_1\colon M_1\to N_1$ is a morphism of $p$-valuated abelian groups. Hence, $\Phi$ is a well-defined functor. In the notation of Lemma \ref{Lemma:AltDescriptionValuatedGroup} we have $$\Phi(M)(i)=M_1\cap p^i\cdot M_2.$$
	Now let $\mathcal{X}$ be the full subcategory of $\operatorname{Sub}(\mathbb{Z}/(p^n))$ consisting of all objects isomorphic to objects of the form $0\to M$ with $M\in \operatorname{Mod}\mathbb{Z}/(p^n)$. Note that $\mathcal{X}$ is closed under direct sums and summands. We can define a new category $\operatorname{Sub}(\mathbb{Z}/(p^n))/\mathcal{X}$ whose objects are the same as the objects in $\operatorname{Sub}(\mathbb{Z}/(p^n))$, and whose morphism spaces are given by
	\[
	\operatorname{Hom}_{\operatorname{Sub}(\mathbb{Z}/(p^n))/\mathcal{X}}(M,N)=\frac{\operatorname{Hom}_{\operatorname{Sub}(\mathbb{Z}/(p^n))}(M,N)}{\mathcal{X}(M,N)}
	\]
	where $\mathcal{X}(M,N)$ is the ideal of all morphisms $M\to N$ factoring through an object in $\mathcal{X}$. Since $\Phi$ vanishes on objects in $\mathcal{X}$, it induces a functor \begin{equation}\label{EquivalenceMonoValuated}
	\operatorname{Sub}(\mathbb{Z}/(p^n))/\mathcal{X}\to \mathcal{V}_p^n.
	\end{equation}

	\begin{Theorem}\label{Theorem:EquivalenceMonoValuatedGroups}
		The functor \eqref{EquivalenceMonoValuated} is an equivalence. 
	\end{Theorem}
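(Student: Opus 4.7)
The plan is to verify the three equivalence criteria for the induced functor $\Phi\colon \operatorname{Sub}(\mathbb{Z}/(p^n))/\mathcal{X} \to \mathcal{V}_p^n$: essential surjectivity, fullness, and faithfulness. The key observation driving everything is that for any object $(M_1 \subseteq M_2) \in \operatorname{Sub}(\mathbb{Z}/(p^n))$, the underlying abelian group $M_2$ is $p^n$-torsion, so its $p$-height takes values in $\{0,1,\ldots,n-1\}\cup\{\infty\}$, and therefore $(M_2, h_{M_2})$ is itself an object of $\mathcal{V}_p^n$. By Proposition \ref{ValuatedGroupsEnoughInjectives} it is injective in the exact structure of $\mathcal{V}_p^n$ (its valuation equals the height), and by Theorem \ref{Theorem:ExactStructureValuatedGroups}\eqref{Theorem:ExactStructureValuatedGroups:1} the set-theoretic inclusion $\Phi(M) \hookrightarrow (M_2, h_{M_2})$ is automatically an inflation.

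For essential surjectivity, given $B \in \mathcal{V}_p^n$ I would apply the existence of enough injectives from Proposition \ref{ValuatedGroupsEnoughInjectives} to obtain an inflation $B \hookrightarrow G$ with $G$ injective. Then $G$ is $p^n$-torsion and hence naturally a $\mathbb{Z}/(p^n)$-module, and $B$ sits inside $G$ as a subgroup whose valuation is the restriction of the height on $G$, giving $\Phi(B \subseteq G) \cong B$. Faithfulness of the induced functor is equally direct: if $(g_1, g_2)\colon M \to N$ satisfies $g_1 = 0$, then $g_2(M_1) = 0$, and so $(g_1, g_2)$ admits the factorisation $M \xrightarrow{(0, g_2)} (0 \subseteq N_2) \xrightarrow{(0, \mathrm{id}_{N_2})} N$ through the object $(0 \subseteq N_2) \in \mathcal{X}$, hence vanishes in the quotient.

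Fullness is the main step. Given a morphism $\phi\colon \Phi(M) \to \Phi(N)$ in $\mathcal{V}_p^n$, I would compose with the inflation $\Phi(N) \hookrightarrow (N_2, h_{N_2})$ and then extend along the inflation $\Phi(M) \hookrightarrow (M_2, h_{M_2})$, using injectivity of $(N_2, h_{N_2})$, to obtain a morphism $g_2\colon (M_2, h_{M_2}) \to (N_2, h_{N_2})$ in $\mathcal{V}_p^n$. In particular $g_2$ is an abelian group homomorphism, hence a $\mathbb{Z}/(p^n)$-module morphism, and its restriction to $M_1$ coincides with $\phi$ viewed inside $N_2$, which forces its image to land in $N_1$. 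This produces the desired lift $(\phi, g_2)\colon M \to N$ in $\operatorname{Sub}(\mathbb{Z}/(p^n))$ with $\Phi(\phi, g_2) = \phi$.

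The only genuinely non-formal point is the extension step in the fullness argument, but as outlined above it is a direct appeal to injectivity in the exact category $\mathcal{V}_p^n$; notably we do not have to modify $M$ or $N$ by summands in $\mathcal{X}$, because the ambient module $N_2$ itself already serves as a suitable injective receptacle.
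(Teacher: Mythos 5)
Your proposal is correct and follows essentially the same route as the paper's proof: the paper's faithfulness argument is identical, its density argument likewise embeds $B$ into an injective object $B'$ with $B'(i) = p^i B'(0)$ and observes that the pullback property of inflations gives $\Phi(B(0)\subseteq B'(0))\cong B$, and its fullness argument introduces the injective objects $C$ and $C'$ with $C(i)=p^i M_2$, $C'(i)=p^i N_2$ — which are exactly your $(M_2,h_{M_2})$ and $(N_2,h_{N_2})$ — and lifts $\phi$ along the inflation $B\to C$ using injectivity of $C'$. The only difference is presentational: you highlight up front that $(M_2,h_{M_2})$ is itself an injective object of $\mathcal{V}_p^n$, which makes the structure of the fullness step slightly more transparent but is not a different argument.
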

	
	\begin{proof}
		It suffices to show that $\Phi$ is full and dense, and that it sends a morphism to $0$ if and only if the morphism factor through an object in $\mathcal{X}$. 
		
		Let $$g=(g_1,g_2)\colon (M_1\subseteq M_2)\to (N_1\subseteq N_2)$$ be a morphism in $\operatorname{Sub}(\mathbb{Z}/(p^n))$, and assume $\Phi(g)$ is $0$. By construction this implies $g_1$ is $0$. But then we can write $g=h\circ k$ where 
		\[k=(0,g_2)\colon (M_1\subseteq M_2)\to (0\subseteq N_2) \quad \text{and} \quad h=(0,1_{N_2})\colon (0\subseteq N_2)\to (N_1\subseteq N_2).
		\]
		This shows that $g$ factors through an object in $\mathcal{X}$. Conversely, if $g$ factors through an object in $\mathcal{X}$, then clearly $\Phi(g)=0$.
		
		Next we show that $\Phi$ is dense. Let $B$ be an object in $\mathcal{V}_p^n$. Using Proposition \ref{ValuatedGroupsEnoughInjectives} we can  find an inflation $B\to B'$ in $\mathcal{V}_p^n$ where $B'$ satisfies $B'(i)=p^i\cdot B'(0)$ for all $0\leq i\leq n$. Since $B\to B'$ is an inflation, we get pullback diagrams as in Theorem \ref{Theorem:ExactStructureValuatedGroups} \eqref{Theorem:ExactStructureValuatedGroups:1}. In other words, we have equalities $B(i)=B(0)\cap p^i\cdot B'(0)$ for $0\leq i\leq n$. Hence, $\Phi$ applied to $B(0)\subseteq B'(0)$ must be isomorphic to $B$, so $\Phi$ is dense.
		
		Finally, we show that $\Phi$ is full. Let $B=\Phi(M_1\subseteq M_2)$ and $B'=\Phi(N_1\subseteq N_2)$, and let $\phi\colon B\to B'$ be a morphism in $\mathcal{V}_p^n$. Define $C$ and $C'$ to be the $p$-valuated abelian groups given by $C(i)=p^i\cdot M_2$ and $C'(i)=p^i\cdot N_2$. Then $C$ and $C'$ are injective in $\mathcal{V}_p^n$, and there exists canonical inflations $B\to C$ and $B'\to C'$ induced from the inclusions $M_1\subseteq M_2$ and $N_1\subseteq N_2$. By injectivity, the composite 
		\[
		B\xrightarrow{\phi}B'\to C'
		\]
		can be lifted via the inflation $B\to C$ to a morphism $\psi\colon C\to C'$. In particular, we have a commutative diagram
		\[
		\begin{tikzcd}
		B(0)\arrow{r}{\phi(0)}\arrow{d}{} &B'(0)\arrow{d}{}\\
		C(0)\arrow{r}{\psi(0)} &C'(0)
		\end{tikzcd}
		\]
		Since $B(0)=M_1$ and $C(0)=M_2$ and $B'(0)=N_1$ and $C'(0)=N_2$, we get a morphism $g=(\phi(0),\psi(0))\colon (M_1\subseteq M_2)\to (N_1\subseteq N_2)$ in $\operatorname{Sub}(\mathbb{Z}/(p^n))$ such that $\Phi(g)=\phi$. This shows that $\Phi$ is full.
	\end{proof}
	
	\begin{Remark}
		By \cite[Theorem 1.2]{HRW84} the category $\mathcal{V}_p^n$ is equivalent to a category $P$ whose objects are the same as in $\operatorname{Sub}(\mathbb{Z}/(p^n))$, and where a morphism $(M_1\subseteq M_2)\to (N_1\subseteq N_2)$ is given by a morphism $M_1\to N_1$ of $\mathbb{Z}/(p^n)$-modules, which can be extended to a morphism $M_2\to N_2$. From Theorem \ref{Theorem:EquivalenceMonoValuatedGroups} we see that $\operatorname{Sub}(\mathbb{Z}/(p^n))/\mathcal{X}$ is equivalent to $P$.  
	\end{Remark}

 Let $\vartheta_p^n$ denote the full subcategory of $\mathcal{V}_p^n$ consisting of all finitely generated $p$-valuated abelian groups $B$. 

 \begin{Proposition}
      The subcategory $\vartheta_p^n$ is closed under kernels, cokernels, and extensions in $\mathcal{V}_p^n$. Hence, it is a quasi-abelian category. As a quasi-abelian category, it has enough injectives, and an object in $\vartheta_p^n$ is injective if and only if it is injective in $\mathcal{V}_p^n$.
 \end{Proposition}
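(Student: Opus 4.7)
The plan is to reduce each of the four claims to the corresponding statement for $\mathcal{V}_p^n$, exploiting Theorem \ref{Theorem:ExactStructureValuatedGroups}, Proposition \ref{ValuatedGroupsEnoughInjectives}, and the equivalence of Theorem \ref{Theorem:EquivalenceMonoValuatedGroups}. First, the closure assertions follow directly from the layer-wise descriptions in Theorem \ref{Theorem:ExactStructureValuatedGroups}: the underlying abelian group of $\ker\phi$ (resp.\ $\operatorname{coker}\phi$) is the abelian-group kernel (resp.\ cokernel) of $\phi(0)\colon B(0)\to B'(0)$, and by part \eqref{Theorem:ExactStructureValuatedGroups:3} applied at $i=0$, the middle term of any conflation sits in a short exact sequence of abelian groups. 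Since subgroups, quotients, and extensions of finitely generated abelian groups are finitely generated, $\vartheta_p^n$ is closed under kernels, cokernels, and extensions in $\mathcal{V}_p^n$. Quasi-abelianness of $\vartheta_p^n$ is then standard: its kernels and cokernels are inherited from $\mathcal{V}_p^n$, and the class of conflations of $\mathcal{V}_p^n$ with all three terms in $\vartheta_p^n$ forms an exact substructure by the closure-under-extensions principle (see e.g.\ \cite[Lemma~10.20]{Bue10}).

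For enough injectives, the aim is to exhibit, for each $B\in\vartheta_p^n$, an inflation $B\to J$ with $J\in\vartheta_p^n$ that is injective in $\mathcal{V}_p^n$. Representing $B$ by some $(M_1\subseteq M_2)$ in $\operatorname{Sub}(\mathbb{Z}/(p^n))$ via Theorem \ref{Theorem:EquivalenceMonoValuatedGroups}, the morphism $(M_1\subseteq M_2)\to(M_2\subseteq M_2)$ given by the inclusion in the first coordinate and the identity in the second corresponds, under the equivalence, to an inflation of $B$ into the object associated to $M_2$ with its canonical $p$-height; the latter is a ``group'' in the sense of \cite{RW79} and hence injective in $\mathcal{V}_p^n$ by Proposition \ref{ValuatedGroupsEnoughInjectives}. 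It therefore suffices to choose the representative so that $M_2$ is finitely generated. Since $M_1=B(0)$ is finite, one can build such an $M_2$ by iteratively adjoining dividers $c$ (with $pc$ an appropriate element of $M_1$ or of a previously adjoined element) for those $b$ whose prescribed valuation exceeds their current $p$-height. The valuation axioms together with $B\in\mathcal{V}_p^n$ force $v(b)+t\le n$ for any $b$ of order $p^t$, so the adjoined elements all have order bounded by $p^n$; termination then follows by induction on the deficit $v - h$ of the remaining elements.

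With enough injectives in hand, the characterisation is immediate. One direction holds because $\vartheta_p^n$ is a full subcategory. For the converse, given $J\in\vartheta_p^n$ injective in $\vartheta_p^n$, the construction above provides an inflation $J\to I$ in $\vartheta_p^n$ with $I$ injective in $\mathcal{V}_p^n$; injectivity of $J$ in $\vartheta_p^n$ then splits this inflation, so $J$ is a direct summand of $I$ in $\mathcal{V}_p^n$ and therefore itself injective in $\mathcal{V}_p^n$. The main obstacle I foresee is making the divider construction of a finitely generated $M_2$ fully rigorous, in particular verifying that the iteratively adjoined elements assemble coherently into a single finite module without producing relations incompatible with the prescribed valuations; a cleaner alternative would be to invoke the explicit construction in \cite[Theorem~2]{RW79} directly and check that it preserves finite generation on finitely generated input.
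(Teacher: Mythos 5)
Your proof is correct and matches the paper's overall structure: establish the closure properties, deduce quasi-abelianness, construct finitely generated injective envelopes, then characterize the injectives. The only place you diverge in route is the enough-injectives step. There you detour through $\operatorname{Sub}(\mathbb{Z}/(p^n))$ via Theorem \ref{Theorem:EquivalenceMonoValuatedGroups} and sketch a hands-on adjoin-a-divider construction of a finitely generated $M_2$; this amounts to re-deriving the relevant content of \cite[Theorem 2]{RW79}, and, as you yourself note, it is cleaner to cite that construction directly and observe that it preserves finite generation. That is exactly what the paper does (it cites Theorem 1 of \cite{RW79} at this point, whereas Proposition \ref{ValuatedGroupsEnoughInjectives} cites Theorem 2 of the same reference for the ambient-category statement, so the paper's numbering in this proof may be a slip). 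Once one has an inflation $B\to J$ with $J\in\vartheta_p^n$ satisfying $v=h$, the detour through $\operatorname{Sub}(\mathbb{Z}/(p^n))$ is unnecessary: injectivity of $J$ in $\mathcal{V}_p^n$ is immediate from Proposition \ref{ValuatedGroupsEnoughInjectives}. Your closure argument (via the layer-wise descriptions of kernels, cokernels, and conflations, combined with the fact that subgroups, quotients, and extensions of finitely generated abelian groups are finitely generated) spells out what the paper dismisses as clear, and your splitting argument for the converse direction of the injectivity characterization is the standard one and is what the paper uses implicitly.
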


 \begin{proof}
   It is clear that $\vartheta_p^n$ is closed under kernels, cokernels, and extensions, and so is a quasi-abelian category. Also by the construction in \cite[Theorem 1]{RW79} any object in $\vartheta_p^n$ has an inclusion into an object in $\vartheta_p^n$ which is injective in $\mathcal{V}_p^n$. Since the inclusion $\vartheta_p^n\to \mathcal{V}_p^n$ is exact, it must also be injective in $\vartheta_p^n$. Hence, $\vartheta_p^n$ has enough injectives, and an object in $\vartheta_p^n$ is injective if and only if it is injective in $\mathcal{V}_p^n$.
 \end{proof}

 Recall that $\operatorname{sub}(\mathbb{Z}/(p^n))$ consists of all representations $M_1\subseteq M_2$ where $M_1,M_2$ are finitely generated $\mathbb{Z}/(p^n)$-modules. Let $\mathcal{Y}$ denote the subcategory $\mathcal{X}\cap \operatorname{sub}(\mathbb{Z}/(p^n))$ of $\operatorname{sub}(\mathbb{Z}/(p^n))$. Then \eqref{EquivalenceMonoValuated}
   restricts to a functor
   \begin{equation}\label{EquivalenceMonoValuatedFinGen}
	\operatorname{sub}(\mathbb{Z}/(p^n))/\mathcal{Y}\to \vartheta_p^n.
	\end{equation} 

 \begin{Theorem}\label{Theorem:EquivalenceMonoValuatedGroupsFinGen}
The functor \eqref{EquivalenceMonoValuatedFinGen} is an equivalence.
 \end{Theorem}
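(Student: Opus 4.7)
The plan is to reduce the theorem to Theorem \ref{Theorem:EquivalenceMonoValuatedGroups} by showing that each of the three verifications (density, fullness, faithfulness) carried out there can be performed inside the finitely generated subcategories. The functor \eqref{EquivalenceMonoValuatedFinGen} is well-defined because if $M_1$ is a finitely generated $\mathbb{Z}/(p^n)$-module then $\Phi(M_1\subseteq M_2)$ has underlying abelian group $M_1$, which is finitely generated, hence lies in $\vartheta_p^n$.

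Faithfulness is immediate: if $g=(g_1,g_2)\colon (M_1\subseteq M_2)\to (N_1\subseteq N_2)$ in $\operatorname{sub}(\mathbb{Z}/(p^n))$ satisfies $\Phi(g)=0$, then $g_1=0$ and the factorization $g=h\circ k$ through $(0\subseteq N_2)$ constructed in Theorem \ref{Theorem:EquivalenceMonoValuatedGroups} stays inside $\mathcal{Y}=\mathcal{X}\cap\operatorname{sub}(\mathbb{Z}/(p^n))$ since $N_2$ is finitely generated. Fullness is handled exactly as before: given $\phi\colon \Phi(M_1\subseteq M_2)\to \Phi(N_1\subseteq N_2)$, define the injective $p$-valuated groups $C,C'$ by $C(i)=p^i\cdot M_2$ and $C'(i)=p^i\cdot N_2$; these are finitely generated because $M_2,N_2$ are, so they are injective in $\vartheta_p^n$ as well, and the lift $\psi\colon C\to C'$ of the composite $B\to B'\to C'$ produces the required morphism $(M_1\subseteq M_2)\to (N_1\subseteq N_2)$ by restricting to degree $0$.

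Density is the only step that genuinely requires finite generation to be preserved, and it is handled by invoking the proposition immediately preceding the theorem: given $B\in\vartheta_p^n$, there is an inflation $B\to B'$ with $B'$ injective in $\vartheta_p^n$, and by that same proposition $B'$ is also injective in $\mathcal{V}_p^n$, so by Proposition \ref{ValuatedGroupsEnoughInjectives} we have $B'(i)=p^i\cdot B'(0)$. The pullback characterization of inflations (Theorem \ref{Theorem:ExactStructureValuatedGroups}\eqref{Theorem:ExactStructureValuatedGroups:1c}) then gives $B(i)=B(0)\cap p^i\cdot B'(0)$, so $\Phi(B(0)\subseteq B'(0))\cong B$; since both $B$ and $B'$ are finitely generated, $B(0)$ and $B'(0)$ are finitely generated $\mathbb{Z}/(p^n)$-modules, so the representation $(B(0)\subseteq B'(0))$ lies in $\operatorname{sub}(\mathbb{Z}/(p^n))$. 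The main (and only) obstacle is this density argument, but it is disposed of precisely by the preceding proposition, whose content is that $\vartheta_p^n$ admits enough injectives that remain injective in the ambient quasi-abelian category $\mathcal{V}_p^n$.
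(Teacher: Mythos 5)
Your proof is correct and follows the same strategy as the paper's: reduce to the proof of Theorem~\ref{Theorem:EquivalenceMonoValuatedGroups} by checking that each verification (faithfulness, fullness, density) stays inside the finitely generated subcategories. You are somewhat more explicit than the paper, which simply asserts that full faithfulness ``follows immediately'' from the unbounded case; your observations that the factorization through $(0\subseteq N_2)$ lands in $\mathcal{Y}$ because $N_2$ is finitely generated, and that the injective objects $C,C'$ used in the fullness argument remain in $\vartheta_p^n$, are exactly the content behind that ``immediacy'', so nothing is missing and the approach coincides with the paper's.
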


 \begin{proof}
    Fully faithfulness of \eqref{EquivalenceMonoValuatedFinGen} follows immediately from \eqref{EquivalenceMonoValuated} being fully faithful. The fact that \eqref{EquivalenceMonoValuatedFinGen} is dense follows from the same argument as in the proof of Theorem \ref{Theorem:EquivalenceMonoValuatedGroups}.
 \end{proof}

	Theorem \ref{Theorem:EquivalenceMonoValuatedGroupsFinGen} implies that there is a bijection between isomorphism classes of indecomposables in $\vartheta_p^n$ and indecomposables in $\operatorname{sub}(\mathbb{Z}/(p^n))$ which are not contained in $\mathcal{Y}$. The main result in \cite{RW99} gives a description of the indecomposables in $\vartheta_p^5$, which can  be used to determine the indecomposables in  $\operatorname{sub}(\mathbb{Z}/(p^5))$. We give more details in the next two subsections.

	\subsection{Valuated trees}\label{Subsection:ValuatedTrees} 
	
	In this subsection we recall the definition of valuated trees, how they describe $p$-valuated abelian groups, following \cite{HRW77}, and how to recover indecomposable representations in $\operatorname{sub}(\mathbb{Z}/(p^n))$ from irretractable valuated trees. The following definitions are taken from \cite{HRW77}. 
	\begin{enumerate}
		\item A \textit{tree} is a set $T$ with a distinguished element $*\in T$ and a function $p\colon T\to T$ satisfying
		\begin{enumerate}
			\item $p(*)=*$.
			\item For all $x\in T$ there exists $n\in \mathbb{Z}_{\geq 0}$ such that $p^n(x)=*$.
		\end{enumerate} 
		
		\item Let $T$ be a tree. A \textit{valuation} on $T$ is a function 
		\[
		v\colon T\to \mathbb{Z}_{\geq 0}\cup \{\infty\} 
		\]
		satisfying $v(p(x))>v(x)$ for all $x\in T$, where we assume $\infty>\infty$. A \textit{valuated tree} is a tree together with a valuation on it. It is called \textit{finite} if $T$ is a finite set.
		\item  A morphism $f\colon T\to T'$ of valuated trees is a map of sets satisfying 
		\begin{enumerate}
			\item $f(p(x))=p(f(x))$ for all $x\in T$.
			\item $v(f(x))\geq v(x)$ for all $x\in T$.
		\end{enumerate} 
		\item $\operatorname{ValTree}_n$ is the category whose objects are finite valuated trees $T$ where $v(x)\leq n-1$ for all $*\neq x\in T$, and whose morphisms are morphisms of valuated trees.
	\end{enumerate}
	Let $T\in \operatorname{ValTree}_n$. Since $p(*)=*$ and $v(p(x))>v(x)$, we must have that $v(*)=\infty$. Also, since $v(x)\leq n-1$ for all $*\neq x\in T$, we must have that $p^n(x)=*$ for all $x\in T$. 
	
	Following \cite{HRW77}, we depict a valuated tree $T$ as a graph where the nodes are elements of $T\backslash\{*\}$. There is an edge from $x$ to $p(x)$ going downwards if $p(x)\neq *$. Each node $x$ also has an entry from $\mathbb{Z}_{\geq 0}\cup \{\infty\}$, corresponding to $v(x)$. 
	\[
	\begin{tikzcd}
	0 \arrow{d}  \\
	1   
	\end{tikzcd} \quad \quad
	\begin{tikzcd}
	1 \arrow{rd} && 0 \arrow{ld} \\
	& 3  & 
	\end{tikzcd} 
	\]
	A finite valuated tree $T$ can also be described as a finite sequences of numbers (corresponding to the values $v(x)$ for $*\neq x\in T$) together with a parenthesization, see for example \cite{RW99}. This can be done recursively as follows:
	\begin{enumerate}
		\item If $T$ has only one element $x\neq *$, then the sequence only consists of the number $v(x)$.
		\item For a general $T$, let $T'=\{x_1,\dots x_k\}$ be the elements of $T\backslash\{*\}$ satisfying $p(x)=*$, and let 
		\[
		T_i=\{x\in T\mid p^m(x)=x_i \text{ for some }m\geq 0\}
		\]
		be the set of elements lying above $x_i$. Let $N_i$ be the parenthesized sequences of numbers associated to the tree obtained from $T_i$ by identifying $x_i$ with $*$. Then the parenthesized sequences of numbers associated to $T$ is 
		\[
		(v(x_1)N_1)\dots (v(x_k)N_k).
		\]
	\end{enumerate}
	For example, for the trees depicted above (from left to right) we get the sequences 
	\[
	 10 \quad \quad 3(1)(0).
	\]
	Note that the order is not unique. Indeed, the second tree can also be depicted by the sequence $3(0)(1)$.
	
	Similarly to $p$-valuated abelian groups, we can describe objects in $\operatorname{ValTree}_n$ in terms of finite filtrations.

\begin{Lemma}\label{Lemma:AltDescriptionValuatedTree}
	$\operatorname{ValTree}_n$ is isomorphic to the category where
 \begin{itemize} 
 \item Objects are increasing sequence $(T(i))_{0\leq i\leq n}$ of sets
			\[
			\{*\}=T(n)\subseteq T(n-1)\subseteq \cdots \subseteq T(1)\subseteq T(0)
			\]
			together with a map $p\colon T(0)\to T(0)$ satisfying $p(T(i))\subseteq T(i+1)$ for $0\leq i\leq n-1$.
   \item Morphisms are tuples $(f(i)\colon T(i)\to T'(i))_{0\leq i\leq n}$ of maps of sets for which
			\[
			\begin{tikzcd}
			T(i)\arrow{r}{f(i)}\arrow{d}{\subseteq} &T'(i)\arrow{d}{\subseteq}\\
			T(i-1)\arrow{r}{f(i-1)} &T'(i-1)
			\end{tikzcd} \quad \text{and} \quad 
			\begin{tikzcd}
			T(i)\arrow{r}{f(i)}&T'(i)\\
			T(i-1)\arrow{r}{f(i-1)} \arrow{u}{p} &T'(i-1) \arrow{u}{p}
			\end{tikzcd}
			\]
			are commutative for all $1\leq i\leq n$ .  
   \end{itemize}
	\end{Lemma}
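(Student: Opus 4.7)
The plan is to mimic the proof of Lemma \ref{Lemma:AltDescriptionValuatedGroup}, replacing subgroups with subsets and the compatibility $p \cdot B(i) \subseteq B(i+1)$ with the compatibility $p(T(i)) \subseteq T(i+1)$. That is, I will construct a functor from $\operatorname{ValTree}_n$ to the category described in the lemma and exhibit its inverse, then verify that both assignments respect the morphism conditions.

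First, given an object $T$ of $\operatorname{ValTree}_n$, I would define $T(i) \colonequals \{x \in T \mid v(x) \geq i\}$ for $0 \leq i \leq n$. The inclusions $T(i) \subseteq T(i-1)$ are immediate, $T(0) = T$ because values lie in $\mathbb{Z}_{\geq 0} \cup \{\infty\}$, and $T(n) = \{*\}$ since $v(x) \leq n-1$ for all $*\neq x$ while $v(*) = \infty$. The containment $p(T(i)) \subseteq T(i+1)$ follows directly from $v(p(x)) > v(x)$. On morphisms, I set $f(i) \colonequals f|_{T(i)}$; this is well-defined because $v(f(x)) \geq v(x)$ forces $f(T(i)) \subseteq T'(i)$, and commutativity of the second square is the condition $f \circ p = p \circ f$. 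Compatibility with inclusions is automatic since $f(i)$ is a restriction of $f(i-1)$.

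Conversely, given a filtered family $(T(i))_{0\leq i\leq n}$ with map $p\colon T(0)\to T(0)$ as in the lemma, I let $T \colonequals T(0)$ and declare $*$ to be the unique element of $T(n)$. The equality $p(*) = *$ is forced because $* \in T(n-1)$ gives $p(*) \in T(n) = \{*\}$. I then set $v(*) = \infty$ and $v(x) = i$ where $i$ is the largest integer with $x \in T(i)$ for $x \neq *$; such a maximal $i$ exists because $x \notin T(n)$. The inequality $v(p(x)) > v(x)$ is a direct translation of $p(T(i)) \subseteq T(i+1)$. For every $x \neq *$, one has $v(x) \leq n-1$, so the resulting $T$ lies in $\operatorname{ValTree}_n$. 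For a tuple $(f(i))$, I assemble these into the single map $f \colonequals f(0)$; the first commutative square ensures this is consistent (i.e.\ that $f(i) = f(0)|_{T(i)}$), so $v(f(x)) \geq v(x)$ holds, and the second square for $i$ ranging over all levels yields $f \circ p = p \circ f$.

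Finally, I would check that the two constructions are mutually inverse. Starting from a valuated tree, passing to the filtration and back returns the same tree because $v(x)$ is by definition the largest $i$ with $x\in T(i)$, and $*$ is characterised as the unique element of $T(n)$. In the other direction, the equality $\{x \mid v(x) \geq i\} = T(i)$ recovered from the reconstructed valuation is built into the definition. On morphisms the two passages are inverse by construction. There is no genuinely hard step; the only mild subtlety to watch is identifying the basepoint correctly from the filtered data, which is handled by the stipulation $T(n) = \{*\}$ together with $p(T(n-1)) \subseteq T(n)$.
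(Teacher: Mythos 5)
Your proof is correct and follows exactly the approach the paper intends: the paper's own proof of this lemma is the single sentence ``the proof is identical to the one of Lemma~\ref{Lemma:AltDescriptionValuatedGroup},'' and you have correctly transcribed that argument, replacing $B(i)=\{x\mid v(x)\geq i\}$ with $T(i)=\{x\mid v(x)\geq i\}$ and the condition $p\cdot B(i)\subseteq B(i+1)$ with $p(T(i))\subseteq T(i+1)$. The two places that genuinely need checking in the tree case but not the group case --- that $p(*)=*$ is forced by $p(T(n-1))\subseteq T(n)=\{*\}$, and that the second commutative square encodes $f\circ p=p\circ f$ --- are both handled correctly.
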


 \begin{proof}
 The proof is identical to the one of Lemma \ref{Lemma:AltDescriptionValuatedGroup}.
\end{proof}
	
	To an object $B$ in $\vartheta_n^p$ we can associate a valuated tree whose underlying set and valuation are the ones of $B$, where $0\in B$ gets identified with the distinguished element $*$, and where the map $B\to B$ is given by multiplication by the prime $p$. This gives a forgetful functor $\vartheta_n^p\to \operatorname{ValTree}_n$.  The following proposition describes its left adjoint. The description is taken from Section $2$ in \cite{HRW77}.

 \begin{Lemma}\label{Lemma:UniqueSumS(X)}
     Given $T\in\operatorname{ValTree}_n$, let $A_T$ be the free abelian group on $T$, modulo the relations
			\begin{align*}
				&p\cdot [x]- [p(x)] \quad \text{for all }x\in T \\
				& [*].
			\end{align*} Then any element $b$ in $A_T$ can be written in a unique way as a sum $\sum_{x\in T}b_x[x]$ where $b_*=0$ and $0\leq b_x\leq p-1$ for all $x\in T$.
 \end{Lemma}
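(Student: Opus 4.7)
The plan is to separate the statement into existence and uniqueness, which I would handle by different methods. Existence follows from an explicit reduction procedure; uniqueness I will deduce from the cardinality equality $|A_T| = p^{|T|-1}$, which matches the number of tuples $(b_x)_{x\ne *}$ with $0\le b_x\le p-1$.

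For existence, write $h(z)$ for the smallest integer $m\ge 0$ with $p^m(z)=*$. Given a representative $\sum_{z\in T} a_z[z]$ of an element of $A_T$, I would process the vertices $z\ne *$ once in decreasing order of $h(z)$ using the local replacement: write $a_z = pq+r$ with $0\le r\le p-1$, then set $a_z:=r$ and $a_{p(z)}:=a_{p(z)}+q$. This is valid because $p[z]=[p(z)]$ in $A_T$, and since $p(z)$ has height $h(z)-1$, processing $z$ never disturbs a vertex that has already been processed. The single pass therefore terminates with all non-$*$ coefficients in $\{0,\ldots,p-1\}$, and the relation $[*]=0$ discards $a_*$.

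For uniqueness I claim $|A_T|=p^{|T|-1}$ by induction on $|T|$, the base $T=\{*\}$ being trivial. For the inductive step, the axioms on $T$ guarantee a leaf $x\ne *$ (one with no preimage under $p$); set $T'=T\setminus\{x\}\in \operatorname{ValTree}_n$. The inclusion $\mathbb{Z}\langle T'\rangle\hookrightarrow\mathbb{Z}\langle T\rangle$ induces $\iota\colon A_{T'}\to A_T$, and the key computation is in the free abelian group: the $[x]$-coefficient of any element of the relations subgroup $R_T$ (generated by $[*]$ and the $p[z]-[p(z)]$) equals $p\cdot a_x$, the sum over preimages of $x$ being empty because $x$ is a leaf. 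From this one deduces simultaneously (i) that $\iota$ is injective, since any element of $R_T\cap\mathbb{Z}\langle T'\rangle$ must have $a_x=0$ and hence lie in $R_{T'}$, and (ii) that $A_T/\iota(A_{T'})$ is generated by the class of $[x]$ with sole relation $p[x]\equiv 0$, hence equals $\mathbb{Z}/(p)$. The resulting short exact sequence $0\to A_{T'}\to A_T\to\mathbb{Z}/(p)\to 0$ gives $|A_T|=p\cdot p^{|T'|-1}=p^{|T|-1}$. Combining with existence, the set map sending $(b_z)_{z\ne *}$ to $\sum b_z[z]$ is a surjection from a set of size $p^{|T|-1}$ onto $A_T$, hence bijective, which is the uniqueness claim. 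The main obstacle is step (ii): ensuring that no combination of relations forces $[x]$ to have order strictly less than $p$ in the cokernel; the $[x]$-coefficient functional on $\mathbb{Z}\langle T\rangle$ combined with leafness of $x$ is what makes this precise.
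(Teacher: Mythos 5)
Your proof is correct and takes a genuinely different route from the paper for the uniqueness claim. For existence both arguments amount to pushing carries toward the root via $p[z]=[p(z)]$; you organize this as a single pass in decreasing order of height, while the paper instead chooses a nonnegative presentation minimizing $\sum_x b_x$ and verifies it has the required form. The more interesting divergence is in uniqueness. The paper asserts that for every $*\neq y\in T$ the assignment $\sum_x b'_x[x]\mapsto b'_y \bmod p$ descends to a well-defined map $A_T\to\mathbb{Z}/(p)$. As written this is only valid when $y$ is a leaf: adding the relator $p[z]-[p(z)]$ for some $z$ with $p(z)=y$ changes the $[y]$-coefficient of a presentation by $-1$, which is nonzero mod $p$. (Concretely, if $T=\{*,a,b\}$ with $p(b)=a$ and $p(a)=*$, then $[a]=p[b]$ in $A_T$, and these two presentations of the same element have $[a]$-coefficients $1$ and $0$.) Your cardinality argument uses exactly the safe form of this observation --- that the $[x]$-coefficient of every element of the relations subgroup is a multiple of $p$ \emph{when $x$ is a leaf} --- to produce the exact sequence $0\to A_{T'}\to A_T\to\mathbb{Z}/(p)\to 0$ and hence $|A_T|=p^{|T|-1}$ by induction; uniqueness then follows by counting against the surjection provided by existence. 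So your proof is not just a valid alternative, it repairs a genuine gap in the paper's written argument.
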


 \begin{proof}
     By definition, we can present an element $b$ in $A_T$ as a sum $\sum_{x\in T}b'_x[x]$ with $b'_x\in \mathbb{Z}$ for all $x\in T$. By repeatedly using the relations $p\cdot [x]- [p(x)]$ we can assume $b'_x\geq 0$ for all $x\in T$. Choose such a presentation $\sum_{x\in T}b_x[x]$ where $\sum_{x\in T}b_x$ is minimal. Then $0\leq b_x\leq p-1$ for all $x\in T$, since otherwise we could use the relation $p\cdot[x]- [p(x)]$ to make the sum $\sum_{x\in T}b_x$ smaller. Similarly, $b_*=0$ since otherwise we can use the second relation to make the sum smaller. Now for $*\neq x\in T$ we have a well-defined morphism $A_T\to \mathbb{Z}/(p)$ sending an element $\sum_{x\in T}b'_x[x]$ in $A_T$ to the residue of $b'_x$ in $\mathbb{Z}/(p)$, and this is independent of choice of presentation $\sum_{x\in T}b'_x[x]$ in $A_T$. Hence, if $b=\sum_{x\in T}b_x[x]=\sum_{x\in T}b'_x[x]$, then $b_x=b'_x$ in $\mathbb{Z}/(p)$, and so if $0\leq b_x,b'_x\leq p-1$, then we must have that $b_x=b'_x$ in $\mathbb{Z}$. This shows uniqueness.
 \end{proof}
 
\begin{Proposition}
 The forgetful functor $\vartheta_n^p\to \operatorname{ValTree}_n$ has a left adjoint 
		\[
		S\colon \operatorname{ValTree}_n\to  \vartheta_n^p
		\]
		defined as follows:
		\begin{enumerate}
			\item Let $T$ be an object in $\operatorname{ValTree}_n$. Then the underlying group of $S(T)$ is the group $A_T$ in Lemma \ref{Lemma:UniqueSumS(X)}. The $p$-valuation of $0\neq b\in S(T)$ is given by
			\[
			v(b)\colonequals \operatorname{min}\{v(x)\mid b_x\neq 0\}.
			\] 
   where $b=\sum_{x\in T}b_x[x]$ is a sum as in Lemma \ref{Lemma:UniqueSumS(X)}.
			\item Let $f\colon T\to T'$ be a morphism in $\operatorname{ValTree}_n$. Then
			\[
			S(f)(b)=\sum_{x\in T}b_x[f(x)].
			\]
   where $b=\sum_{x\in T}b_x[x]$ in $S(T)$ is a sum as in Lemma \ref{Lemma:UniqueSumS(X)}.
		\end{enumerate}  
	\end{Proposition}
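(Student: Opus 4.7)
My plan is to verify, in order: (i) $S(T)\in \vartheta_p^n$ is well-defined; (ii) $S(f)$ is a morphism in $\vartheta_p^n$ and $S$ is functorial; and (iii) $S$ is left adjoint to the forgetful functor $U\colon \vartheta_p^n\to \operatorname{ValTree}_n$.

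For (i), $A_T$ is finitely generated because $T$ is finite, and $v$ is a well-defined function by the uniqueness half of Lemma \ref{Lemma:UniqueSumS(X)}. The axiom $v(pb)>v(b)$ is immediate: if $b=\sum_x b_x[x]$ is canonical, then $pb = \sum_x b_x [p(x)]$ via $p[x]=[p(x)]$, and each $v(p(x))>v(x)$. The subadditivity $v(b_1+b_2)\geq \min\{v(b_1),v(b_2)\}$ is the real content. The naive sum $\sum_x(b_x^1+b_x^2)[x]$ has support contained in the union of the supports of $b_1$ and $b_2$, and reducing it to canonical form uses only the rewrites $p[x]\mapsto [p(x)]$ and $[*]\mapsto 0$, each of which moves the support to elements of strictly larger valuation or removes them. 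Finally, $v(b)\leq n-1$ for $b\neq 0$ holds because the canonical support of such $b$ contains some $x\neq *$, and $v(x)\leq n-1$.

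For (ii), I define $S(f)$ by $[x]\mapsto [f(x)]$ extended $\mathbb{Z}$-linearly. This respects the defining relations of $A_T$: the condition $f(p(x))=p(f(x))$ handles $p[x]=[p(x)]$, while $f(*)=*$ (forced because $*$ is the unique fixed point of the successor map in any tree of $\operatorname{ValTree}_n$) handles $[*]=0$. Functoriality of $S$ is then immediate, and $S(f)$ preserves the valuation by the same monotonicity argument as in (i), combined with $v(f(x))\geq v(x)$. For (iii), I exhibit the adjunction via a unit $\eta_T\colon T\to U(S(T))$ given by $x\mapsto [x]$; this is a morphism of valuated trees since $\eta_T(*)=0$, $\eta_T(p(x))=p[x]=p\cdot \eta_T(x)$, and $v([x])=v(x)$ by definition. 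Given $B\in \vartheta_p^n$ and a tree morphism $g\colon T\to U(B)$, the assignment $[x]\mapsto g(x)$ extends uniquely to a group homomorphism $\tilde g\colon S(T)\to B$ because $g(p(x))=p\cdot g(x)$ and $g(*)=0$ make the defining relations of $A_T$ vanish. Preservation of valuations by $\tilde g$ uses subadditivity of $v$ in $B$ together with $v(g(x))\geq v(x)$, and uniqueness is automatic since the classes $[x]$ generate $A_T$.

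The main obstacle is the subadditivity axiom in step (i); once the rewrite-monotonicity argument is clean there, essentially the same argument is reused for all other valuation-preservation checks throughout the proof.
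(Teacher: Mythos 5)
Your proposal is correct and follows essentially the same route as the paper's proof: verify $S$ is a well-defined functor and then establish the adjunction by exhibiting the unique extension of a tree map $g\colon T\to U(B)$ to a morphism $S(T)\to B$ of valuated groups. The paper's proof is considerably terser (it asserts functoriality as "clear" and states the extension $\overline{f}$ directly without a unit), whereas you flesh out the nontrivial parts -- in particular the subadditivity axiom for the valuation on $S(T)$ via the carry/rewrite-monotonicity argument, and the valuation-preservation of the extension via subadditivity in $B$; these are correct and are exactly the details the paper leaves implicit.
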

	
	\begin{proof}
		It is clear that $S$ as defined gives a functor $S\colon \operatorname{ValTree}_n\to  \vartheta_n^p$. Now let $f\colon T\to B$ be a morphism of valuated trees with $T\in \operatorname{ValTree}_n$ and $B\in \vartheta_p^n$. Define 
		\[
		\overline{f}\colon S(T)\to B
		\] 
		where if $b=\sum_{x\in T}b_x[x]$ for $0\leq b_x\leq p-1$, then $\overline{f}(b)=\sum_{x\in T}b_xf(x)$. Then $\overline{f}$ is the unique extension of  $f$ to a morphism $S(T)\to B$ of $p$-valuated abelian groups. This implies that $S$ is left adjoint to the forgetful functor, which proves the claim.
	\end{proof}	
	
	A morphism $r\colon T\to T$ of valuated trees is a \textit{retraction} if it satisfies $r=r^2$. A valuated tree $T$ is called \textit{irretractable} if the only retractions $r\colon T\to T$ are the identity and the morphism sending everything to the distinguished element $*$.
	
	\begin{Theorem}\label{Theorem:IrretractableTreesValuatedGroups}
		Let $T \in \operatorname{ValTree}_n$. The following hold:
		\begin{enumerate}
			\item\label{Theorem:IrretractableTreesValuatedGroups:1}   $S(T)$ is indecomposable if and only if $T$ is irretractable.
			\item\label{Theorem:IrretractableTreesValuatedGroups:2} Assume $T$ is irretractable, and let $T'\in \operatorname{ValTree}_n$. If $S(T)$ is isomorphic to $S(T')$, then $T$ is isomorphic to $T'$.
			\item \label{Theorem:IrretractableTreesValuatedGroups:3} 
			There exists a collection $T_1,\dots T_m\in \operatorname{ValTree}_n$ of irretractable valuated trees, unique up to isomorphism and permutation, such that 
			\[
			S(T)\cong S(T_1)\oplus \dots S(T_m).
			\]
		\end{enumerate}
	\end{Theorem}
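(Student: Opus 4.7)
The plan is to prove the three statements in the order (1), (3), (2). For the ``only if'' direction of (1), I would use that a nontrivial retraction $r \colon T \to T$ yields an idempotent $S(r) \in \End(S(T))$ by functoriality. Since by Lemma \ref{Lemma:UniqueSumS(X)} the generators $[x]$ for $x \neq *$ are $\mathbb{F}_p$-linearly independent in $S(T)/p\cdot S(T)$, $S(r)$ is neither zero nor the identity, producing a nontrivial direct sum decomposition of $S(T)$. For the converse, given an idempotent $e \in \End(S(T))$, by Lemma \ref{Lemma:UniqueSumS(X)} it is determined by coefficients $b_{x,y} \in \{0, \dots, p-1\}$ with $e([x]) = \sum_y b_{x,y}[y]$. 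The constraints $v(e([x])) \geq v(x)$ and $e(p\cdot [x]) = p \cdot e([x])$, coupled with $e^2 = e$, should force $e$ to come from an endomorphism of $T$ in $\operatorname{ValTree}_n$, and idempotency will then upgrade this to a retraction of $T$; irretractability then forces $e \in \{0, 1\}$, so $S(T)$ is indecomposable.

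For (3), I would induct on $|T|$. If $T$ is irretractable the decomposition is trivial. Otherwise, choose a nontrivial retraction $r$ and form $T_1 = r(T)$ together with $T_2$, the valuated tree obtained by collapsing $T_1$ to $*$ inside $T$. One checks that $T \cong T_1 \vee T_2$ in $\operatorname{ValTree}_n$, and since $S$ is a left adjoint it preserves coproducts, giving $S(T) \cong S(T_1) \oplus S(T_2)$. Both $T_i$ have strictly smaller cardinality than $T$, so the induction closes. Uniqueness up to permutation I would deduce by combining statement (2) below with the Krull--Schmidt theorem applied to $S(T)$ viewed as a finite abelian $p$-group.

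For (2), I would apply (3) to write $T' = T'_1 \vee \cdots \vee T'_m$ with each $T'_j$ irretractable, giving $S(T) \cong S(T') \cong \bigoplus_j S(T'_j)$. Since $S(T)$ is indecomposable by (1) and each $S(T'_j)$ is indecomposable by (1) applied to $T'_j$, Krull--Schmidt forces $m = 1$ and $S(T) \cong S(T'_1)$. To conclude $T \cong T'_1$, I would recover the valuated tree from Ulm--Kaplansky-type invariants of $B = S(T)$: the $\mathbb{F}_p$-vector spaces $B(i)/(B(i+1) + p \cdot B(i-1))$ count nodes of valuation $i$ not in the image of $p$, and iterating with the residual action of multiplication by $p$ one reconstructs the full tree in the irretractable case.

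The hard part will be the converse direction of (1): extracting a retraction of $T$ from an idempotent of $S(T)$ requires careful analysis of how the relation $p\cdot [x] = [p(x)]$ interacts with the idempotency condition. The coefficients $b_{x,y}$ satisfy coupled constraints at different levels of $T$, and disentangling them to produce a retraction map involves a level-by-level bookkeeping argument starting from the elements of largest valuation. Since these results are classical in the theory of simply presented $p$-groups, a clean alternative would be to invoke directly the structure theory developed in \cite{HRW77}, where results of this form are originally established.
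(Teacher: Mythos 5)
The paper's proof is very short: the ``only if'' direction of (1) is done by the same functoriality argument you give, and the remaining assertions are cited directly from \cite{HRW77} (Theorems 6, 7, 8). You correctly identify this fallback yourself in the last sentence, but the direct proofs you sketch in place of the citations contain a genuine gap.

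The problem is in your argument for (3). The claim ``one checks that $T \cong T_1 \vee T_2$ in $\operatorname{ValTree}_n$'' is false in general. Take $T = \{*,a,b,c\}$ with $p(a)=*$, $p(b)=p(c)=a$, and valuations $v(a)=2$, $v(b)=1$, $v(c)=0$ (the tree $2(1)(0)$). The map $r$ fixing $*,a,b$ and sending $c\mapsto b$ is a nontrivial retraction, so $T_1 = r(T) = \{*,a,b\}$ (the tree $21$) and $T_2$, obtained by collapsing $T_1$, is the tree $0$. But $T_1 \vee T_2$ is the tree $(21)(0)$, in which the node of valuation $0$ sits directly below $*$, whereas in $T$ it sits below $a$; these are not isomorphic trees. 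Consequently the step ``$S$ is a left adjoint so $S(T) \cong S(T_1)\oplus S(T_2)$'' does not follow. It so happens that $S(T)\cong S(T_1)\oplus S(T_2)$ anyway in this example (one can check that $[c]\mapsto [c]-[b]$ gives a valuation-preserving isomorphism), but this requires a separate argument at the level of valuated groups, essentially re-deriving the content of \cite[Theorems 6--8]{HRW77}. The correct statement is that the idempotent $S(r)$ splits $S(T)$, and one must then show the summands are again simply presented; this is precisely the nontrivial part of the cited theorems and cannot be reduced to a wedge decomposition of the tree itself.

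A secondary issue: for the converse direction of (1) you give only a plan (``level-by-level bookkeeping''), not a proof. As you say, the clean route is to cite \cite[Theorem 7]{HRW77}, and that is exactly what the paper does. Given that, it would be cleaner to drop the sketched direct arguments for (1) converse and for (3), and simply invoke \cite{HRW77} for all three nontrivial assertions, as the paper does.
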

	
	\begin{proof}
		If $r\colon T\to T$ is a nontrivial retraction, then $S(r)\colon S(T)\to S(T)$ is a nontrivial retraction, so $S(T)$ is decomposable. Hence, $S(T)$ being indecomposable implies $T$ is irretractable. The converse follows from \cite[Theorem 7]{HRW77}. Part \eqref{Theorem:IrretractableTreesValuatedGroups:2} follows from \cite[Theorem 6]{HRW77}, and part \eqref{Theorem:IrretractableTreesValuatedGroups:3} follows from \cite[Theorem 8]{HRW77}.
	\end{proof}
	
	Combining Theorem \ref{Theorem:IrretractableTreesValuatedGroups} with the equivalence in Theorem \ref{Theorem:EquivalenceMonoValuatedGroupsFinGen} tells us that each irretractable valuated tree in $\operatorname{ValTree}_n$ corresponds to an indecomposable representation in $\operatorname{sub}(\mathbb{Z}/(p^n))$. However, we don't get any information on how the representation looks. In the next part we give an explicit description of it.
	
	Recall that a morphism $f\colon M\to N$ of $\mathbb{Z}/(p^n)$-modules is \textit{left minimal} if any morphism $g\colon N\to N$ satisfying $g\circ f=f$ must be an isomorphism. By \cite[Theorem 2.2]{ARS95} this is equivalent to requiring that if $N\cong N'\oplus N''$ such that $f$ corresponds to a morphism 
	\[
	\begin{bmatrix}f'\\0 \end{bmatrix}\colon M\to N'\oplus N''
	\]
	then $N''=0$. In other words, $f$ has no nonzero summands in $\mathcal{Y}$ when we consider it as an object in $\operatorname{rep}(\mathtt{1}\to \mathtt{2},\operatorname{mod}\mathbb{Z}/(p^n))$. Given $T\in \operatorname{ValTree}_n$ we want to construct an object in $\operatorname{sub}(\mathbb{Z}/(p^n))$ which is left minimal, and which is isomorphic to $S(T)$ under the functor $\Phi$ in \eqref{FunctorPhi}. The main idea of the construction is taken from \cite{Ric88}. 
	
	Let $T\in \operatorname{ValTree}_n$. We construct a filtration 
	\[
	T_{0}\subseteq T_1\subseteq \cdots \subseteq T_{n-1}
	\]
	of valuated trees as follows: First set $T_{0}\colonequals T$. Then define $T_i$ for $i\geq 1$ recursively as
	\[
	T_{i}=T_{i-1}\cup \{(x,m)\in T\times \mathbb{Z}_{\geq 0}\mid v(x)=i \text{ and }h(x)<i\text{ and }0\leq m<i   \}
	\]
	where $h(x)=\operatorname{max}\{t\geq 0\mid \text{there exists }y\in T_{i-1} \text{ with }p^t(y)=x\}$ is the $p$-height of $x$. We extend the map $p\colon T_{i-1}\to T_{i-1}$ to a map $p\colon T_i\to T_i$ by setting 
	$$p(x,m)=
	\begin{cases}
	(x,m+1), & \text{if}\ m+1<i \\
	x, & \text{if}\ m+1=i.
	\end{cases}$$
	We extend the valuation on $T_{i-1}$ to a valuation on $T_i$ by setting $v(x,m)=m$. Intuitively, we are adding elements to $T_{i-1}$ so that every element of value $i$ also has height $i$. 
\begin{Example}
    Given the tree
    \[
   T=\begin{tikzcd}
	1 \arrow{rd} && 2 \arrow{ld} \\
	& 4  &
    \end{tikzcd}
    \]
    the construction above applied to $T$ gives the filtration $T_0\subseteq T_1\subseteq T_2\subseteq T_3\subseteq T_4$ where $T_0=T$ and $T_2=T_3$, and where $T_1,T_2,T_4$ are 
    \[
    \begin{tikzcd}\color{blue}0\arrow{d} \\
	1 \arrow{rd} && 2 \arrow{ld} \\
	& 4 
    \end{tikzcd} \quad
    \begin{tikzcd}&& \color{blue}0\arrow{d}& \\
    0\arrow{d} && \color{blue}1\arrow{d}  \\
	1 \arrow{rd} &  & 2 \arrow{ld}  &  \\
	& 4   
 \end{tikzcd} \quad \begin{tikzcd}& \color{blue}0\arrow{d}\\
 & \color{blue}1\arrow{d} & 0\arrow{d}& \\
    0\arrow{d} & \color{blue}2\arrow{d} & 1\arrow{d}  \\
	1 \arrow{rd} & \color{blue}3\arrow{d} & 2 \arrow{ld}  &  \\
	& 4   
 \end{tikzcd}
    \]
   respectively. Here the new vertices are written in blue.
\end{Example}

Fix $T\in \operatorname{ValTree}_n$ and let $T_0\subseteq T_1\subseteq \cdots \subseteq T_{n-1}$ be a filtration as above. Let $\gamma\colon T\to T_{n-1}$ denote the inclusion of valuated trees.  	
	\begin{Theorem}\label{Theorem:ValuatedTreeToMonicRep}
		 The following hold:
		\begin{enumerate}
			\item\label{Theorem:ValuatedTreeToMonicRep:1} $S(T_{n-1})$ is an injective object in $\vartheta_p^n$.
			\item\label{Theorem:ValuatedTreeToMonicRep:2} $S(\gamma)\colon S(T)\to S(T_{n-1})$ is an inflation in $\vartheta_p^n$.
			\item\label{Theorem:ValuatedTreeToMonicRep:3} $S(\gamma)\colon S(T)\to S(T_{n-1})$ is a left minimal morphism of $\mathbb{Z}/(p^n)$-modules.       
		\end{enumerate}
	\end{Theorem}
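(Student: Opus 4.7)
The plan is to verify each of the three claims in turn, drawing on the structural results already established in the excerpt.

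For part~\eqref{Theorem:ValuatedTreeToMonicRep:1}, by Proposition~\ref{ValuatedGroupsEnoughInjectives} it suffices to show $v(b)=h(b)$ for every $b\in S(T_{n-1})$, where $h$ denotes the $p$-height in the group. The inequality $v(b)\geq h(b)$ is automatic from the defining axiom $v(pz)>v(z)$ iterated. For the reverse, the recursive construction of $T_{n-1}$ ensures $h_{T_{n-1}}(y)\geq v(y)$ at the \emph{tree} level for every $y\in T_{n-1}$: if $y\in T$ with $v(y)=i$, either the height already suffices in $T_{i-1}$ or we explicitly inserted a chain $(y,0),\ldots,(y,i-1)$ witnessing $p^i(y,0)=y$; and for an added element $y=(x,m)$, the subchain $(x,0),\ldots,(x,m)$ witnesses $p^m(x,0)=y$. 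Passing to the group via the unique normal form of Lemma~\ref{Lemma:UniqueSumS(X)}, any $b=\sum_y b_y[y]$ with $v(b)=\min\{v(y):b_y\neq 0\}=i$ can be rewritten as $p^i\sum_y b_y[y'_y]$ for suitable $y'_y\in T_{n-1}$ with $p^i(y'_y)=y$, so $h(b)\geq i$.

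For part~\eqref{Theorem:ValuatedTreeToMonicRep:2}, I invoke Theorem~\ref{Theorem:ExactStructureValuatedGroups} \eqref{Theorem:ExactStructureValuatedGroups:1}: inflations in $\vartheta_p^n$ are precisely the injective valuation-preserving morphisms. Injectivity of $S(\gamma)$ follows directly from Lemma~\ref{Lemma:UniqueSumS(X)}: an element $b\in S(T)$ in its unique normal form, viewed in $S(T_{n-1})$, has the same normal form over the larger index set $T_{n-1}$ with support still in $T$, hence remains nonzero there. Valuation preservation is immediate from $v(b)=\min\{v(y):b_y\neq 0\}$, since the minimum is computed over the same support, a subset of $T$, in both groups.

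Part~\eqref{Theorem:ValuatedTreeToMonicRep:3} is the most delicate. I argue by contradiction: assume $S(T_{n-1})=A\oplus B$ as $\mathbb{Z}/(p^n)$-modules with $S(T)\subseteq A$ and $B\neq 0$, and let $\pi\colon S(T_{n-1})\to S(T_{n-1})$ be the idempotent projection onto $B$. Since $\pi$ vanishes on every $[y]$ with $y\in T$, it is completely determined by the values $\beta_x:=\pi([(x,0)])$ as $x$ runs over the added chains, via $\pi([(x,m)])=p^m\beta_x$. The relation $p^{v(x)}[(x,0)]=[x]\in S(T)$ forces $p^{v(x)}\beta_x=0$, and $\pi^2=\pi$ forces $\pi(\beta_x)=\beta_x$ for every added $x$. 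Expanding $\beta_x=\sum_y c^{(x)}_y[y]$ in the normal form of Lemma~\ref{Lemma:UniqueSumS(X)} and applying $\pi$ yields the self-consistency equation
\[
\beta_x \;=\; \sum_{(x',m)\in T_{n-1}\setminus T} c^{(x)}_{(x',m)}\, p^m\, \beta_{x'},
\]
which has the shape $(M-I)\vec\beta=0$ over $\mathbb{Z}/(p^n)$ with $M$ indexed by the added chains. The key observation is that each diagonal entry $M_{xx}=\sum_{m}c^{(x)}_{(x,m)}p^m$ lies in $p\mathbb{Z}/(p^{v(x)})$: the constraint $p^{v(x)}\beta_x=0$ forces the $[(x,0)]$-coefficient $c^{(x)}_{(x,0)}$ to vanish because, by the minimality of the construction (a chain is inserted at $x$ only when $h_{T_{v(x)-1}}(x)<v(x)$), no other generator already in $T_{v(x)-1}$ contributes to $[x]$ at level $v(x)$; hence $M_{xx}-1$ is a unit modulo $p^{v(x)}$. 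The main obstacle is decoupling the $\beta_x$'s when multiple added chains interact through later stages of the construction; my plan is to induct on the added chains ordered by $v(x)$ from smallest to largest, treating the off-diagonal terms as already handled once the smaller-valuation $\beta_{x'}$'s have been forced to zero, and using the unit-diagonal property to force $\beta_x=0$ at each step, contradicting $B\neq 0$.
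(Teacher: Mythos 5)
Your arguments for parts \eqref{Theorem:ValuatedTreeToMonicRep:1} and \eqref{Theorem:ValuatedTreeToMonicRep:2} are correct and essentially the same as the paper's (show $v=h$ on $S(T_{n-1})$ via the construction of the $T_i$, then use Proposition \ref{ValuatedGroupsEnoughInjectives}; show $S(\gamma)$ is injective and valuation-preserving, then use Theorem \ref{Theorem:ExactStructureValuatedGroups}\eqref{Theorem:ExactStructureValuatedGroups:1}).

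Part \eqref{Theorem:ValuatedTreeToMonicRep:3} is where the proposal breaks down. Your reduction is sound and even sharp: an idempotent $\pi$ with $S(T)\subseteq\ker\pi$ is exactly the data of elements $\beta_x$ with $p^{v(x)}\beta_x=0$ and $\beta_x=\sum_{(x',m)}c^{(x)}_{(x',m)}p^m\beta_{x'}$, so left minimality is \emph{equivalent} to this system having only the trivial solution. But you have not solved the system, and the plan you sketch does not close. Inducting on the added chains by increasing $v(x)$, at the step for $x$ you may set the $\beta_{x'}$ with $v(x')<v(x)$ to zero, but the equation still reads $(1-M_{xx})\beta_x=\sum_{x'\neq x,\ v(x')\geq v(x)}M_{xx'}\beta_{x'}$; inverting $1-M_{xx}$ expresses $\beta_x$ in terms of quantities not yet known to vanish, so nothing is forced. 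The only information you have on the off-diagonal entries is that $M_{xx'}\in p^{v(x')-v(x)}\mathbb{Z}$ when $v(x')>v(x)$ (via the cyclic quotient onto the branch of $x'$); for $v(x')\leq v(x)$, and in particular for two distinct chains of \emph{equal} valuation, the unit part $c^{(x)}_{(x',0)}$ is unconstrained by anything you have written, so $I-M$ need not be invertible mod $p$ (e.g.\ a $2\times 2$ block $\bigl(\begin{smallmatrix}0&1\\1&0\end{smallmatrix}\bigr)$ mod $p$ kills your determinant argument). Some genuinely new input is required; the paper supplies it by a completely different, element-level argument: take $0\neq a'\in A'$ with $pa'=0$, pick a basis element $z\in T_{n-1}\setminus T$ of \emph{minimal height} with nonzero coefficient, and derive a contradiction from the identity $h(a'-a)=\min(h(a),h(a'))$ for elements of complementary direct summands --- an identity about heights that your linear system does not see.

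Two smaller remarks. First, your justification that $c^{(x)}_{(x,0)}=0$ only rules out contributions to $[x]$ at level $v(x)$ from generators already in $T_{v(x)-1}$; you must also rule out elements added at later stages, which works because a chain added at stage $j>v(x)$ hangs off a vertex $w$ with $v(w)=j>v(x)$, and the axiom $v(p(y))>v(y)$ prevents such a $w$ from lying above $x$. Second, the conclusion $c^{(x)}_{(x,0)}=0$ should be drawn via the well-defined coefficient homomorphism $A_{T_{n-1}}\to\mathbb{Z}/(p)$ at the vertex $x$ from the proof of Lemma \ref{Lemma:UniqueSumS(X)}, since multiplication by $p^{v(x)}$ does not act coordinatewise on normal forms.
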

	
	\begin{proof}
		By construction of $T_i$, we have that all elements $x\in T_i$ satisfying $v(x)\leq i$ must also satisfy $v(x)=h(x)$. In particular, $v(x)=h(x)$ for all $*\neq x\in T_{n-1}$. This implies that $v(x)=h(x)$ for any $x\in S(T_{n-1})$. Hence, $S(T_{n-1})$ is an injective object in $\vartheta_p^n$ by Proposition \ref{ValuatedGroupsEnoughInjectives}, which proves \eqref{Theorem:ValuatedTreeToMonicRep:1}. 
		
		Next we observe that the inclusion $\gamma\colon T\to T_{n-1}$ satisfies $v(\gamma(x))=v(x)$ for all $x\in T$ by construction. Hence, the same holds for $S(\gamma)\colon S(T)\to S(T_{n-1})$. By Theorem \ref{Theorem:ExactStructureValuatedGroups} \eqref{Theorem:ExactStructureValuatedGroups:1} this implies that $S(\gamma)$ is an inflation, which proves \eqref{Theorem:ValuatedTreeToMonicRep:2}.
		
		To prove \eqref{Theorem:ValuatedTreeToMonicRep:3}, assume $S(T_{n-1})\cong A\oplus A'$ as $\mathbb{Z}/(p^n)$-module, so that $S(\gamma)$ is of the form
		\[
		\begin{bmatrix}
		f\\0
		\end{bmatrix}\colon S(T)\to A\oplus A'.
		\]
		If $A'\neq 0$, we can choose a nonzero element $a'\in A'$ satisfying $p\cdot a'=0$. By Lemma \ref{Lemma:UniqueSumS(X)} we can write
		\begin{align*}
			a'=\sum_{x\in T_{n-1}}a'_xx 
		\end{align*}
		for a tuple of integers $(a'_x)_{x\in T_{n-1}}$ satisfying $0\leq a'_x\leq p-1$. Since $a'\neq 0$ and $a'\notin S(T)$, there exists $z\in T_{n-1}\backslash T$ for which $a'_z\neq 0$. Choose such an element $z$ of minimal height. By construction it must be of the form $(y,m)$ for some $y\in T$ and some integer $m<v(y)$. We claim that $m+1=v(y)$. Indeed, let $B$ be the quotient of $S(T_{n-1})$ by the subgroup generated by all elements $x'\in T_{n-1}$ which are not of the form $(y,k)$ for some $k<v(y)$. The image of $a'$ in $S(T_{n-1})/B$ must then be of the form 
		\[
		\sum_{k=0}^{v(y)-1}a'_{(y,k)}(y,k).
		\]
		But since $p\cdot a'=0$, the same must hold for its image in $S(T_{n-1})/B$. Therefore, since the quotient is isomorphic to a cyclic group, we must have that $a'_{(y,k)}=0$ for $k<v(y)-1$. Since $0\neq a'_{x'}=a'_{(y,m)}$, this implies that $m=v(y)-1$. 
		
		Now consider $$a=\sum_{x\in T}a'_xx \quad \text{ and } \quad a'-a=\sum_{x\in T_{n-1}\backslash T}a'_xx.$$ 
		Since $a\in A$ and $a'\in A'$ are in two different components of $S(T_n)\cong A\oplus A'$, we must have that $h(a'-a)=\operatorname{min}(h(a),h(a'))$ where $h$ denotes the $p$-height in $S(T_n)$.  Since $z\in T_{n-1}\backslash T$ was chosen to be of minimal $p$-height with $a'_z\neq 0$, we have that $h(a'-a)=h(z)$. We plan to show that $h(a)<h(z)$, thereby obtaining a contradiction. 
		
		First note that since $p\cdot a'=0$ and $a'_z\neq 0$, the set \[
		\{x'\in T_{n-1}\mid a'_{x'}\neq 0 \text{ and }p^k(x')=y \text{ for some }k>0\}
		\]
		must contain an element $\tilde{x}\neq z$. We claim that $h(\tilde{x})<h(z)$. Indeed, since $p^k(\tilde{x})=y$ for some $k>0$, we have $h(\tilde{x})+k\leq h(y)$. Since $h(z)=m=h(y)-1$ by the paragraph above,  it follows that 
		\[
		h(\tilde{x})\leq h(z)-k+1 \text{ for some }k>0
		\]
		Hence, if $h(\tilde{x})$ is not smaller than $h(z)$, then we must have that $k=1$. This implies that $p(\tilde{x})=y$ and $h(\tilde{x})=h(z)$. Therefore $\tilde{x}\in T$, since in the construction of $T_{n-1}$ from $T$ we add at most one element in $T_{n-1}\backslash T$ whose image under $p$ is $y$. Now since $v(y)=m+1$ we know that $z\notin T_{m}$ and $z\in T_{m+1}$ by construction of these sets. Also, since $v(\tilde{x})=m$, it must have height $m$ in $T_m$. Hence, $y=p(\tilde{x})$ will have height $m+1$ in $T_m$. But this  implies that we do not add any new element to $T_{m+1}$ whose image under $p$ is $y$, which is a contradiction to the existence of $z$. 
		
		It follows from the previous paragraph that $h(\tilde{x})<h(z)$. Since $z$ was an element of minimal height contained in $T_{n-1}\backslash T$ and satisfying $a'_z\neq 0$, we must have that $\tilde{x}\in T$. Since
		\[
		h(a)=\operatorname{min}\{h(x)\mid x\in T \text{ and }a'_x\neq 0\}
		\]
		we have that $h(a)\leq h(\tilde{x})<h(z)$. But $h(z)=h(a'-a)=\operatorname{min}(h(a),h(a'))$, so we have a contradiction. Hence $A'=(0)$, which proves the claim. 
	\end{proof}
	
	For $T\in \operatorname{ValTree}_n$, let $M_T$ be the morphism $S(T)\to S(T_{n-1})$ considered as an object in $\operatorname{sub}(\mathbb{Z}/(p^n))$. We can combine the previous results and results from \cite{HRW77} to obtain the following corollary.
	
	\begin{Corollary}\label{Corollary:IrretractableTreesSubmodules} 
		Let $T \in \operatorname{ValTree}_n$. The following hold:
		\begin{enumerate}
			\item\label{Corollary:IrretractableTreesSubmodules:1}  $M_T$ is indecomposable if and only if $T$ is irretractable.
			\item\label{Corollary:IrretractableTreesSubmodules:2} Assume $T$ is irretractable, and let $T'\in \operatorname{ValTree}_n$. If $M_T$ is isomorphic to $M_{T'}$, then $T$ is isomorphic to $T'$.
			\item\label{Corollary:IrretractableTreesSubmodules:3} 
			There exists a collection $T_1,\dots, T_m\in \operatorname{ValTree}_n$ of irretractable valuated trees, unique up to isomorphism and permutation, such that 
			\[
			M_T\cong M_{T_1}\oplus \dots M_{T_m}.
			\]
		\end{enumerate}
	\end{Corollary}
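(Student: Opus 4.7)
The strategy is to transport the three statements across the equivalence of Theorem~\ref{Theorem:EquivalenceMonoValuatedGroupsFinGen}. Two ingredients make this work. First, the construction of $M_T$ is arranged so that $\Phi(M_T) \cong S(T)$ in $\vartheta_p^n$: the underlying group of $\Phi(M_T)$ is $S(T)$ with the $p$-valuation given by restriction of the $p$-height in $S(T_{n-1})$; since $S(T_{n-1})$ is injective in $\vartheta_p^n$ by Theorem~\ref{Theorem:ValuatedTreeToMonicRep}~\eqref{Theorem:ValuatedTreeToMonicRep:1}, $p$-height and $p$-valuation coincide on $S(T_{n-1})$ by Proposition~\ref{ValuatedGroupsEnoughInjectives}, and since $S(\gamma)$ is an inflation by Theorem~\ref{Theorem:ValuatedTreeToMonicRep}~\eqref{Theorem:ValuatedTreeToMonicRep:2}, these restrict to the original $p$-valuation on $S(T)$ via Theorem~\ref{Theorem:ExactStructureValuatedGroups}~\eqref{Theorem:ExactStructureValuatedGroups:1}. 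Second, by Theorem~\ref{Theorem:ValuatedTreeToMonicRep}~\eqref{Theorem:ValuatedTreeToMonicRep:3}, $M_T$ has no nonzero summand lying in $\mathcal{Y}$.

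The bookkeeping step I would record next is that, because $\operatorname{sub}(\mathbb{Z}/(p^n))$ is Krull--Schmidt and $\mathcal{Y}$ is closed under summands, the indecomposable objects of $\operatorname{sub}(\mathbb{Z}/(p^n))/\mathcal{Y}$ are precisely the indecomposable objects of $\operatorname{sub}(\mathbb{Z}/(p^n))$ not in $\mathcal{Y}$ (the endomorphism ring in the quotient being a nonzero quotient of a local ring, hence local). Consequently, two objects of $\operatorname{sub}(\mathbb{Z}/(p^n))$ each having no $\mathcal{Y}$-summand are isomorphic in $\operatorname{sub}(\mathbb{Z}/(p^n))$ if and only if they are isomorphic in $\operatorname{sub}(\mathbb{Z}/(p^n))/\mathcal{Y}$.

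With these two inputs in hand, the three parts follow in a single sweep. For~\eqref{Corollary:IrretractableTreesSubmodules:1}, $T$ is irretractable iff $S(T)$ is indecomposable by Theorem~\ref{Theorem:IrretractableTreesValuatedGroups}~\eqref{Theorem:IrretractableTreesValuatedGroups:1}, iff $M_T$ is indecomposable in $\operatorname{sub}(\mathbb{Z}/(p^n))/\mathcal{Y}$ (via $\Phi$), iff $M_T$ is indecomposable in $\operatorname{sub}(\mathbb{Z}/(p^n))$ (by the bookkeeping observation, since $M_T$ has no $\mathcal{Y}$-summand). For~\eqref{Corollary:IrretractableTreesSubmodules:2}, applying $\Phi$ to an isomorphism $M_T \cong M_{T'}$ yields $S(T) \cong S(T')$ in $\vartheta_p^n$, and then Theorem~\ref{Theorem:IrretractableTreesValuatedGroups}~\eqref{Theorem:IrretractableTreesValuatedGroups:2} gives $T \cong T'$. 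For~\eqref{Corollary:IrretractableTreesSubmodules:3}, the decomposition $S(T) \cong S(T_1) \oplus \cdots \oplus S(T_m)$ from Theorem~\ref{Theorem:IrretractableTreesValuatedGroups}~\eqref{Theorem:IrretractableTreesValuatedGroups:3} produces an isomorphism $M_T \cong M_{T_1} \oplus \cdots \oplus M_{T_m}$ in $\operatorname{sub}(\mathbb{Z}/(p^n))/\mathcal{Y}$, which the bookkeeping observation upgrades to an isomorphism in $\operatorname{sub}(\mathbb{Z}/(p^n))$, and uniqueness follows from Krull--Schmidt in $\operatorname{sub}(\mathbb{Z}/(p^n))$ together with~\eqref{Corollary:IrretractableTreesSubmodules:2}. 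The only step requiring more than a direct translation along $\Phi$ is the bookkeeping comparison of decompositions in $\operatorname{sub}(\mathbb{Z}/(p^n))$ and its $\mathcal{Y}$-quotient, and that is where I expect the main attention to be needed.
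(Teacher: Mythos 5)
Your proof follows the same route as the paper's: identify $\Phi(M_T)\cong S(T)$, note via Theorem~\ref{Theorem:ValuatedTreeToMonicRep}~\eqref{Theorem:ValuatedTreeToMonicRep:3} that $M_T$ has no $\mathcal{Y}$-summand, then transport indecomposability, isomorphism, and decomposition across the equivalence of Theorem~\ref{Theorem:EquivalenceMonoValuatedGroupsFinGen} and invoke Theorem~\ref{Theorem:IrretractableTreesValuatedGroups}. You are in fact more careful than the paper in two places the paper leaves implicit — the verification that the valuation on $\Phi(M_T)$ agrees with that of $S(T)$, and the Krull--Schmidt bookkeeping comparing $\operatorname{sub}(\mathbb{Z}/(p^n))$ with its $\mathcal{Y}$-quotient — and both of your justifications are correct.
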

	
	\begin{proof}
		Note that  $\Phi(M_T)=S(T)$, and that by Theorem \ref{Theorem:ValuatedTreeToMonicRep} the objects $M_T$ has no nonzero summands in $\mathcal{Y}$. Hence, from the equivalence in Theorem \ref{Theorem:EquivalenceMonoValuatedGroupsFinGen} we get that $M_T$ is indecomposable if and only if $S(T)$ is indecomposable, that $M_T\cong M_{T'}$ if and only if $S(T)\cong S(T')$, and that $M_T\cong M_{T_1}\oplus \dots M_{T_m}$ if and only $S(T)\cong S(T_1)\oplus \dots \oplus S(T_m)$. The claim follows now from Theorem \ref{Theorem:IrretractableTreesValuatedGroups}
	\end{proof}

	Any $p$-valuated abelian group of the form $S(T)$ is called a \textit{simply presented }$p$\textit{-group}. Similarly, we call an object of the form $M_T \oplus X'$ with $T\in \operatorname{ValTree}_n$ and $X'\in \mathcal{Y}$ \textit{simply presented} monomorphic representation. These are precisely the monomorphic representations whose image under $\Phi$ is isomorphic to a simply presented $p$-group. In \cite{Ric88} they remark that any inclusion into a cyclic group and any split monomorphism is simply presented in $\operatorname{sub}(\mathbb{Z}/(p^n))$. Also, by \cite[Theorem 4.2]{HRW84} any object in  $\operatorname{sub}(\mathbb{Z}/(p^n))$ for $n\leq 4$ is simply presented. However, in \cite[Example 6]{HRW77} they construct an indecomposable representation in $\operatorname{sub}(\mathbb{Z}/(p^5))$ which is not simply presented, see also Example \ref{Example:p5TorsionSubgroups} below. For $n\geq 6$ the category $\operatorname{sub}(\mathbb{Z}/(p^n))$ is representation infinite by \cite{Pla76}. On the other hand, there are always finitely many indecomposable simply presented representations in $\operatorname{sub}(\mathbb{Z}/(p^n))$ by \cite[Lemma 5.4]{HW81}. Hence, there must be infinitely many indecomposable representations which are not simply presented when $n\geq 6$. We pose the following question:
	
	\begin{Question}
		Can one characterize the subcategory of $\operatorname{sub}(\mathbb{Z}/(p^n))$ of simply presented monomorphic representations intrinsically in $\operatorname{sub}(\mathbb{Z}/(p^n))$?
	\end{Question}

	There has been work on determining the lattice of irretractable valuated trees in $\operatorname{ValTree}_n$ for different $n$, see \cite{BHRW87,Ric88}. We pose the following question: 

	\begin{Question}
		What is the relationship between the lattice of irretractable valuated trees in $\operatorname{ValTree}_n$ and the Auslander--Reiten quiver of the subcategory of simply presented monomorphic representations over $\mathbb{Z}/(p^n)$?
	\end{Question}
	
	\begin{Remark}
		There has been a lot of work on $p$-valuated abelian groups which we do not mention here, both in the finite case, but also when the abelian group is infinite and there is no bound on the value $v(x)$ of the non-zero elements $x$ of the group. For the categorical theory of $p$-valuated groups see \cite{RW79}, and for results on simply presented $p$-valuated groups see \cite{HRW77}. For other references see \cite[Section 8.2]{Arn00} and \cite{HR81,BHW83,HRW84,BHRW87,Ric88}.
		
		%In \cite{HW81} they discuss the general structure of valuated $p$-groups. In particular, they investigate in which way different characterization of totally projective groups extend to valuated $p$-groups, they provide results on inclusions of valuated tree into $p$-groups, and they provide results on decompositions of simply presented valuated $p$-groups into sums of indecomposables. We refer to \cite{HRW77,HW81,HRW84} for some of the literature on this topic.
	\end{Remark}
	
	\subsection{Indecomposable $p^n$-torsion subgroups.}
	
	The classification of indecomposable objects in $\vartheta_p^3$ and $\vartheta_p^4$ was done in \cite{HRW84}, while the one for $\vartheta_p^5$ was done in \cite{RW99}. By Theorem \ref{Theorem:EquivalenceMonoValuatedGroupsFinGen}, this provides a classification of indecomposables in $\operatorname{sub}(\mathbb{Z}/(p^n))$ for $n\leq 5$. Here we give an explicit description of these indecomposables, using the equivalence in Theorem \ref{Theorem:EquivalenceMonoValuatedGroupsFinGen} and Corollary \ref{Corollary:IrretractableTreesSubmodules}.
	
	\begin{Example}[$p^3$-torsion subgroups]\label{Example:p3TorsionSubgroups} We consider $\vartheta_p^3$. By \cite[Corollary 4.3]{HRW84} the underlying abelian group of an indecomposable object in this category must by cyclic. Hence, all indecomposable objects are of the form $S(T)$ where $T$ is a cyclic tree. Therefore the irretractable valuated trees in $\operatorname{ValTree}_3$ are the ones of the form  
	\[
	\begin{tikzcd}
	i_1 
	\end{tikzcd} \quad \quad
	\begin{tikzcd}
	i_2 \arrow{d} \\
	j_2  
	\end{tikzcd} \quad \quad 
		\begin{tikzcd}
		i_3 \arrow{d} \\
		j_3 \arrow{d}  \\
		k_3 
	\end{tikzcd} 
	\]
	where $0\leq i_1\leq 2$ and $0\leq i_2<j_2\leq 2$ and $0\leq i_3<j_3<k_3\leq 2$. The corresponding sequences of parenthesized words are, respectively, 
	\[
	i_1 \quad \quad j_2i_2 \quad  \quad k_3j_3i_3.
	\]
	With this notation, $\operatorname{ValTree}_3$ has the following irretractable valuated trees
	\[
	0 \quad 1 \quad 2 \quad 10 \quad 20 \quad 21 \quad 210.
	\]
	By Theorem \ref{Theorem:ValuatedTreeToMonicRep}, these correspond to the indecomposable representations
 \begin{align*}
M_0=\mathbb{Z}/(p)\xrightarrow{\operatorname{id}}\mathbb{Z}/(p) && M_{10}=\mathbb{Z}/(p^2)\xrightarrow{\operatorname{id}}\mathbb{Z}/(p^2) && M_{210}=\mathbb{Z}/(p^3)\xrightarrow{\operatorname{id}}\mathbb{Z}/(p^3)
 \end{align*}
 given by the identity morphism, and
	\begin{align*}
	& M_1= \mathbb{Z}/(p)\xrightarrow{}\mathbb{Z}/(p^2) && M_2=\mathbb{Z}/(p)\xrightarrow{}\mathbb{Z}/(p^3) \\
 & M_{20}= \mathbb{Z}/(p^2)\xrightarrow{}\mathbb{Z}/(p)\oplus \mathbb{Z}/(p^3) 
	&& M_{21}=\mathbb{Z}/(p^2)\xrightarrow{}\mathbb{Z}/(p^3) 
	\end{align*}
 where $\mathbb{Z}/(p^i)\to \mathbb{Z}/(p^j)$ is given by the inclusion for $i<j$ and the projection for $i>j$. Together with the indecomposable representations 
	\begin{align*}
		& X_1=0\to \mathbb{Z}/(p) && X_2=0\to \mathbb{Z}/(p^2) && X_3=0\to \mathbb{Z}/(p^3)
	\end{align*}
in $\mathcal{Y}$, this gives all the indecomposable representations in $\operatorname{sub}(\mathbb{Z}/(p^3))$. There are $10$ of them in total.
	\end{Example}

\begin{Example}[$p^4$-torsion subgroups]\label{Example:p4TorsionSubgroups}By \cite[Theorem 4.2]{HRW84} the irretractable valuated trees in $\operatorname{ValTree}_4$ are the ones mentioned in Example \ref{Example:p3TorsionSubgroups}, together with the cyclic trees
\[
3\quad  30  \quad 31 \quad  32 \quad  310 \quad  320 \quad  321 \quad 3210	
\]
and the non-cyclic tree $3(2)(10)$. The tree $3210$ gives the representation 
\[
M_{3210}=\mathbb{Z}/(p^4)\xrightarrow{\operatorname{id}}\mathbb{Z}/(p^4)
\]
defined by the identity morphism. From the other cyclic trees we get the following indecomposable representations in  $\operatorname{sub}(\mathbb{Z}/(p^4))$ 
\begin{align*}
&M_3=\mathbb{Z}/(p)\xrightarrow{}\mathbb{Z}/(p^4) && M_{30}= \mathbb{Z}/(p^2)\xrightarrow{}\mathbb{Z}/(p)\oplus \mathbb{Z}/(p^4) \\
& M_{31}= \mathbb{Z}/(p^2)\xrightarrow{}\mathbb{Z}/(p^2)\oplus \mathbb{Z}/(p^4) && M_{32}=\mathbb{Z}/(p^2)\xrightarrow{}\mathbb{Z}/(p^4) \\
& M_{310}= \mathbb{Z}/(p^3)\xrightarrow{}\mathbb{Z}/(p^2)\oplus \mathbb{Z}/(p^4) && M_{320}= \mathbb{Z}/(p^3)\xrightarrow{}\mathbb{Z}/(p)\oplus \mathbb{Z}/(p^4) \\
& M_{321}=\mathbb{Z}/(p^3)\xrightarrow{}\mathbb{Z}/(p^4) 
\end{align*}
where any morphism $\mathbb{Z}/(p^i)\to \mathbb{Z}/(p^j)$ between any two components above is given by the inclusion when $i<j$, multiplication with $p$ when $i=j$, and the projection when $i>j$. 
From the non-cyclic tree we get the following representation
\[
M_{3(2)(10)}=\mathbb{Z}/(p)\oplus \mathbb{Z}/(p^3)\xrightarrow{\begin{pmatrix} \iota&\pi\\0&\iota \end{pmatrix}}\mathbb{Z}/(p^2)\oplus \mathbb{Z}/(p^4)
\]
where $\iota$ denotes the inclusion and $\pi$ the projection. Together with the object $$X_4=0\to \mathbb{Z}/(p^4)$$ in $\mathcal{Y}$ and the indecomposables in Example \ref{Example:p3TorsionSubgroups} this gives up to isomorphism all indecomposables in $\operatorname{sub}(\mathbb{Z}/(p^4))$. There are $20$ of them in total.
\end{Example}

\begin{Example}[$p^5$-torsion subgroups]\label{Example:p5TorsionSubgroups} In Section 4 \cite{RW99} they list the irretractable trees in $\operatorname{ValTree}_5$. In addition to the ones mentioned in Examples \ref{Example:p3TorsionSubgroups} and \ref{Example:p4TorsionSubgroups}, there are
\begin{align*}
& 4 \quad 40 \quad 41 \quad 42 \quad 43 \quad 410 \quad 420 \quad 421 \quad 430 \\ 
& 431 \quad 432 \quad 4210 \quad 4310 \quad 4320 \quad 4321 \quad 43210	\\
&43(2)(10) \quad  4(210)(32) \quad  4(210)(31) \quad 4(210)(30) \quad 4(210)(3) \\ 
& 4(30)(21) \quad 4(21)(3) \quad 4(20)(3) \quad 4(10)(3) \quad 4(10)(2) \\
& 4(32)(310)
\end{align*}
The tree $43210$ gives the representation 
\[
 M_{43210}=(\mathbb{Z}/(p^5)\xrightarrow{\operatorname{id}}\mathbb{Z}/(p^5))
\]
defined by the identity morphism. The cyclic trees $42$ and $431$ give the representations
\[
M_{42}=\mathbb{Z}/(p^2)\xrightarrow{\begin{pmatrix}
    p\cdot \iota\\ \iota
\end{pmatrix}}\mathbb{Z}/(p^3)\oplus\mathbb{Z}/(p^5) \quad \quad M_{431}=\mathbb{Z}/(p^3)\xrightarrow{\begin{pmatrix}
    p\cdot \pi\\ \iota
\end{pmatrix}}\mathbb{Z}/(p^2)\oplus \mathbb{Z}/(p^5)
\]
where $\pi$ is the projection and $\iota$ denote the canonical inclusions.
From the other cyclic trees we get the following indecomposable representations
\begin{align*}
&M_4=\mathbb{Z}/(p)\xrightarrow{}\mathbb{Z}/(p^5) && M_{40}= \mathbb{Z}/(p^2)\xrightarrow{}\mathbb{Z}/(p)\oplus \mathbb{Z}/(p^5) \\
& M_{41}= \mathbb{Z}/(p^2)\xrightarrow{}\mathbb{Z}/(p^2)\oplus \mathbb{Z}/(p^5) && M_{43}=\mathbb{Z}/(p^2)\xrightarrow{}\mathbb{Z}/(p^5) \\
& M_{410}= \mathbb{Z}/(p^3)\xrightarrow{}\mathbb{Z}/(p^2)\oplus \mathbb{Z}/(p^5) &&  M_{420}= \mathbb{Z}/(p^3)\xrightarrow{}\mathbb{Z}/(p)\oplus \mathbb{Z}/(p^3)\oplus \mathbb{Z}/(p^5)  \\
& M_{421}= \mathbb{Z}/(p^3)\xrightarrow{}\mathbb{Z}/(p^3)\oplus \mathbb{Z}/(p^5) && M_{430}= \mathbb{Z}/(p^3)\xrightarrow{}\mathbb{Z}/(p)\oplus \mathbb{Z}/(p^5) \\
 & M_{432}=\mathbb{Z}/(p^3)\xrightarrow{}\mathbb{Z}/(p^5) && M_{4210}=\mathbb{Z}/(p^4)\xrightarrow{}\mathbb{Z}/(p^3)\oplus\mathbb{Z}/(p^5) \\
 &  M_{4310}=\mathbb{Z}/(p^4)\xrightarrow{}\mathbb{Z}/(p^2)\oplus\mathbb{Z}/(p^5) && M_{4320}=\mathbb{Z}/(p^4)\xrightarrow{}\mathbb{Z}/(p)\oplus\mathbb{Z}/(p^5) \\
 & M_{4321}=\mathbb{Z}/(p^4)\xrightarrow{}\mathbb{Z}/(p^5)
\end{align*}
where any morphism $\mathbb{Z}/(p^i)\to \mathbb{Z}/(p^j)$ between any two components above is given by the inclusion when $i<j$, multiplication with $p$ when $i=j$, and the projection when $i>j$. We also have the representations coming from non-cyclic trees. A subset of them is given by an upper triangular $2\times 2$ matrix which have inclusions on the diagonal, and where the morphism $g\colon \mathbb{Z}/(p^i)\to \mathbb{Z}/(p^j)$ on the top right corner of the matrix is given by multiplication with $p$ if $i=j$, and by the projection map if $i>j$. They are
\begin{align*}
    & M_{43(2)(10)}=\mathbb{Z}/(p)\oplus \mathbb{Z}/(p^4)\xrightarrow{}\mathbb{Z}/(p^2)\oplus \mathbb{Z}/(p^5) \\
    & M_{4(210)(32)}=\mathbb{Z}/(p^2)\oplus \mathbb{Z}/(p^4)\xrightarrow{}\mathbb{Z}/(p^3)\oplus \mathbb{Z}/(p^5) \\
    & M_{4(210)(3)}=\mathbb{Z}/(p)\oplus \mathbb{Z}/(p^4)\xrightarrow{}\mathbb{Z}/(p^3)\oplus \mathbb{Z}/(p^5) \\
    & M_{4(21)(3)}=\mathbb{Z}/(p)\oplus \mathbb{Z}/(p^3)\xrightarrow{}\mathbb{Z}/(p^3)\oplus \mathbb{Z}/(p^5) \\ 
    & M_{4(10)(3)}=\mathbb{Z}/(p)\oplus \mathbb{Z}/(p^3)\xrightarrow{}\mathbb{Z}/(p^2)\oplus \mathbb{Z}/(p^5) 
\end{align*}
The remaining simply presented indecomposable representations in $\operatorname{sub}(\mathbb{Z}/(p^5))$ can be given as follows 
\begin{equation*}
     \resizebox{\textwidth}{!}{$M_{4(210)(31)}=\mathbb{Z}/(p^2)\oplus \mathbb{Z}/(p^4)\xrightarrow{\begin{pmatrix} 0&\pi\\\iota&\pi\\ 0& \iota \end{pmatrix}}\mathbb{Z}/(p^2)\oplus\mathbb{Z}/(p^3)\oplus \mathbb{Z}/(p^5) \quad \quad 
   M_{4(210)(30)}= \mathbb{Z}/(p^2)\oplus \mathbb{Z}/(p^4)\xrightarrow{\begin{pmatrix} \pi&0\\\iota&\pi\\ 0& \iota \end{pmatrix}}\mathbb{Z}/(p)\oplus\mathbb{Z}/(p^3)\oplus \mathbb{Z}/(p^5)$}
\end{equation*}
\begin{equation*}
    \resizebox{\textwidth}{!}{$M_{4(30)(21)}=\mathbb{Z}/(p^2)\oplus \mathbb{Z}/(p^3)\xrightarrow{\begin{pmatrix} \pi&\pi\\\iota&0\\ 0& \iota \end{pmatrix}}\mathbb{Z}/(p)\oplus\mathbb{Z}/(p^3)\oplus \mathbb{Z}/(p^5) \quad \quad 
M_{4(20)(3)}=\mathbb{Z}/(p)\oplus \mathbb{Z}/(p^3)\xrightarrow{\begin{pmatrix} 0&\pi\\\iota&p\cdot\\ 0& \iota \end{pmatrix}}\mathbb{Z}/(p)\oplus\mathbb{Z}/(p^3)\oplus \mathbb{Z}/(p^5)$}
\end{equation*}
\begin{equation*}
 \resizebox{\textwidth}{!}{$
     M_{4(10)(2)}=\mathbb{Z}/(p)\oplus \mathbb{Z}/(p^3)\xrightarrow{\begin{pmatrix} \iota&\pi\\0&p\cdot\\ 0& \iota \end{pmatrix}}\mathbb{Z}/(p^2)\oplus\mathbb{Z}/(p^3)\oplus \mathbb{Z}/(p^5) \quad \quad
    M_{4(32)(310)}=\mathbb{Z}/(p^2)\oplus \mathbb{Z}/(p^4)\xrightarrow{\begin{pmatrix} p\cdot&\pi\\\iota&0\\ 0 & \iota \end{pmatrix}}\mathbb{Z}/(p^2)\oplus\mathbb{Z}/(p^4)\oplus \mathbb{Z}/(p^5)$}
\end{equation*}
Here $\iota$ denotes the inclusion and $\pi$ the projection.
	
Finally, there are precisely two indecomposable $p$-valuated abelian groups in $\vartheta_p^5$ which are not simply presented. In \cite{RW99} they are represented by the hung forest $F$ and the hung tree $T$ below.
\[
 F= \begin{tikzcd}
 0 \arrow{d}\\
 1 \arrow{d} && 2 \arrow{d} \\
	3  \arrow[rr, bend right, dash,"4"]  && 3
	\end{tikzcd}\quad \quad 
T=\begin{tikzcd}
 0 \arrow{d}\\
 1 \arrow{d} && 2 \arrow{d} \\
	3 \arrow{rd} \arrow[rr, bend right, dash,"4"]  && 3 \arrow{ld} \\
	& 4  & 
	\end{tikzcd}
\]
The $p$-valuated abelian group $B_F$ associated to $F$ has generators $x$ and $y$ with relations $p^3x=0=p^2y$ and with $p$-valuation $v\colon B_F\to \mathbb{Z}_{\geq 0}$ uniquely defined by
\[
v(x)=0 \quad  v(px)=1 \quad  v(p^2x)=3 \quad v(y)=2 \quad  v(py)=3  \quad  v(p^2x-py)=4.
\]
For an explanation why it is not simply presented see \cite[Section 6]{HRW77}. 

Similarly, the $p$-valuated abelian group $B_T$ associated to $T$ has generators $x$ and $y$ with relations $p^3x=p^2y$ and $p^4x=0$, and with $p$-valuation $v\colon B_F\to \mathbb{Z}_{\geq 0}$ uniquely defined by
\begin{align*}
&v(x)=0  \quad v(px)=1  \quad   v(p^2x)=3 \quad  v(p^3x)=4\\
& v(y)=2 \quad v(py)=3   \quad v(p^2x-py)=4.
\end{align*}

A short computation shows that the unique indecomposable representation in $\operatorname{sub}(\mathbb{Z}/(p^5))$ which corresponds to $B_F$ is given by
\[
M_F=\mathbb{Z}/(p^2)\oplus \mathbb{Z}/(p^3)\xrightarrow{\begin{pmatrix} 0&\pi\\\iota&\iota\\ \iota& 0 \end{pmatrix}}\mathbb{Z}/(p^2)\oplus\mathbb{Z}/(p^4)\oplus \mathbb{Z}/(p^5)
\]
and the unique indecomposable representation in $\operatorname{sub}(\mathbb{Z}/(p^5))$ which corresponds to $B_T$ is given by
\[
M_T=\mathbb{Z}/(p^2)\oplus \mathbb{Z}/(p^4)\xrightarrow{\begin{pmatrix} p\cdot &\pi\\0&\iota\\ \iota& 0 \end{pmatrix}}\mathbb{Z}/(p^2)\oplus\mathbb{Z}/(p^5)\oplus \mathbb{Z}/(p^5).
\]
Together with the object $$X_5=0\to \mathbb{Z}/(p^5)$$ in $\mathcal{Y}$ and the indecomposables in Examples \ref{Example:p3TorsionSubgroups} and \ref{Example:p4TorsionSubgroups} we get up to isomorphism all indecomposables in $\operatorname{sub}(\mathbb{Z}/(p^5))$. There are $50$ of them in total.

	\end{Example}

	\section{Correction to the literature}\label{Section:Correction to the literature}

Here it is explained why $\operatorname{mono}(\mathtt{1}\to \mathtt{2}\to \mathtt{3}\to \mathtt{4},k([x]/(x^4))$ is wild, contradictory to \cite[Theorem 1.4]{Sim02}. In fact, wildness in the sense of Drozd is shown, i.e. that there exist a faithful exact functor
\[
    \operatorname{rep}\Gamma \to \operatorname{mono}(Q,\Lambda)
    \] 
that preserves indecomposables and reflects isomorphisms, where $\operatorname{rep}\Gamma$ is the category of finite-dimensional representations of $\Gamma$, the quiver with one vertex and two loops. The strategy is to first prove wildness for a subalgebra of a cover of $\operatorname{mono}(\mathtt{1}\to \mathtt{2}\to \mathtt{3}\to \mathtt{4},k([x]/(x^4))$, and then use this to show wildness of $\operatorname{mono}(Q,\Lambda)$. The statement and proof was kindly provided by the referee.

\begin{Theorem}\label{Theorem:RefereeWild}
    The category $\operatorname{mono}(\mathtt{1}\to \mathtt{2}\to \mathtt{3}\to \mathtt{4},k[x]/(x^4))$ has wild representation type.
\end{Theorem}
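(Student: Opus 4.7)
The plan, following the hint in the excerpt, is to proceed by covering theory: locate a wild full subcategory inside a suitable cover of $\Lambda Q$ and push the wildness down to $\operatorname{mono}(Q,\Lambda)$, where $\Lambda=k[x]/(x^4)$ and $Q$ is the linearly oriented $\mathbb{A}_4$-quiver. Since $\Lambda$ is a $k$-algebra, $\Lambda Q\cong\Lambda\otimes_k kQ$ admits a natural Galois cover $\tilde A=\tilde\Lambda\otimes_k kQ$, where $\tilde\Lambda$ is the locally bounded Nakayama algebra on the doubly-infinite linear quiver $\cdots\to\bullet\to\bullet\to\bullet\to\cdots$ modulo the fourth power of its arrow ideal. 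The Galois group $\mathbb{Z}$ acts by horizontal shift, and the push-down functor $\pi_\ast\colon\operatorname{mod}\tilde A\to\operatorname{mod}\Lambda Q$ carries monomorphic representations to monomorphic representations, since the condition that each $h_{\mathtt i,\operatorname{in}}$ be injective is local at the vertex $\mathtt i$ and invariant under $\pi_\ast$ applied to support-finite modules.

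Next I would cut out a finite convex subquiver of the staircase $L\times Q$ (with $L$ the line quiver) and the corresponding finite-dimensional quotient subalgebra $B$ of $\tilde A$, chosen so that a tractable full subcategory of $\operatorname{mod}B$ automatically lies in $\operatorname{mono}(Q,\tilde\Lambda)$ after zero extension to the rest of the staircase. The epivalence of Theorem \ref{Theorem:Epivalence} is a guiding principle here: since $\overline{\operatorname{mod}}\,\Lambda$ has three non-zero indecomposables $k[x]/(x^i)$, $i=1,2,3$, representations of $Q$ valued in $\overline{\operatorname{mod}}\,\Lambda$ already form a rich category, and one can try to encode the two loops of the quiver $\Gamma$ (one vertex, two loops) as a pair of free endomorphisms placed at carefully chosen positions of the staircase. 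The aim is a faithful exact functor $\operatorname{rep}\Gamma\to\operatorname{mod}B$ that preserves indecomposability and reflects isomorphisms, obtained from a known wild pattern such as a five-subspace problem or a free two-generator bimodule sitting between two layers of the Loewy filtration.

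Composing with the push-down then gives a functor $\operatorname{rep}\Gamma\to\operatorname{mono}(Q,\Lambda)$. Because the constructed family is supported on a bounded window of the staircase, its $\mathbb Z$-translates have pairwise disjoint supports, so standard covering-theoretic arguments (in the vein of Bongartz--Gabriel and Dowbor--Skowro\'nski) guarantee that $\pi_\ast$ still preserves indecomposability and reflects isomorphisms on this family, while exactness and faithfulness are automatic. This is exactly wildness in Drozd's sense, as formulated in the introduction to Section \ref{Section:Correction to the literature}.

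The main obstacle is the second step: engineering an explicit window in $L\times Q$ where the four successive monomorphism conditions imposed by $Q$ become unrestrictive enough to accommodate a wild family, given the length constraint forced by $x^4=0$. This is a combinatorial matching of the staircase geometry with the parameter space of a wild representation type; I expect it to be the content of the referee's explicit construction and the technical core of the argument. Once the window has been correctly identified, verifying faithfulness, exactness, reflection of isomorphisms, and preservation of indecomposability for the resulting functor $\operatorname{rep}\Gamma\to\operatorname{mono}(Q,\Lambda)$ should reduce to routine linear algebra on the matrix presentations of the constructed representations.
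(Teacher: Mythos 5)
Your proposal correctly identifies the covering-theoretic framework that the paper uses: pass to the Galois cover $B$ of $\Lambda Q$ over $G=\mathbb{Z}$, find a wild full subcategory of $\operatorname{rep}B$ landing inside the monomorphism condition, and push down via $F_\lambda$ using \cite[Lemma 3.5]{Gab81} to preserve indecomposability and reflect isomorphisms. The observation that the pushdown respects the monomorphism condition for support-finite modules, and that $\mathbb{Z}$-translates of a bounded window have disjoint supports, are both correct and match the paper.

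However, the proposal has a genuine gap, and you acknowledge it yourself: you do not actually construct the wild full subcategory inside the cover, instead writing that you ``expect it to be the content of the referee's explicit construction and the technical core of the argument.'' This is precisely the core of the argument, and it is not routine. In particular, the choice of window has to overcome the constraints imposed by the commutativity relations, by $x^4=0$, and by the four successive monomorphism conditions simultaneously; there is no obvious ``known wild pattern'' such as a five-subspace problem waiting to be placed inside. The paper's actual construction is considerably more indirect than your sketch suggests: it identifies a finite convex piece $A_{44}$ of the cover, a subalgebra $A_{43}$ which is shown to be tilted of type $\tilde{\mathbb{E}}_6$ via an explicit tilting module, and then locates an indecomposable \emph{preprojective} $\tilde{\mathbb{E}}_6$-module $M$ whose dimension $d$ at a specific vertex can be made $\geq 3$ (this requires an argument counting summands to show that the dimension at that vertex must grow). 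Transporting $M$ across the equivalence to an $A_{43}$-module $X$, one pairs it with the simple $Y$ at the remaining vertex to get an orthogonal exceptional pair with $\operatorname{Ext}^1_{A_{44}}(Y,X)=d$ and $\operatorname{Ext}^1_{A_{44}}(X,Y)=0$, whence \cite[1.5 Lemma]{Rin76} yields a fully faithful exact embedding $\operatorname{rep}K_d\to\operatorname{rep}A_{44}$; since $K_d$ is wild for $d\geq 3$, one then composes with an embedding of $\operatorname{rep}\Gamma$ and checks that the image avoids the simple injective so the horizontal maps stay monic. None of this chain --- the tilting identification, the preprojective growth estimate, the orthogonal-pair embedding, the final monicity check --- is present in your proposal. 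As it stands, your argument establishes the reduction strategy but not the theorem.
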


\begin{proof}
    Consider the category $\operatorname{Rep}B$ of representations of the $k$-linear category $B$ defined by the infinite quiver 
    \[
		\begin{tikzcd}
  \vdots \arrow[d] & \vdots \arrow[d] & \vdots \arrow[d] & \vdots \arrow[d] \\
		41 \arrow[d] \arrow{r} & 42 \arrow{r}\arrow{d} &43 \arrow{d}\arrow{r} & 44\arrow{d} \\
		31 \arrow[d] \arrow{r} & 32 \arrow{r}\arrow{d} &33 \arrow{d}\arrow{r} & 34\arrow{d} \\
  21 \arrow[d] \arrow{r} & 22 \arrow{r}\arrow{d} &23 \arrow{d}\arrow{r} & 24\arrow{d} \\
  11 \arrow[d] \arrow{r} & 12 \arrow{r}\arrow{d} &13 \arrow{d}\arrow{r} & 14\arrow{d}  \\
  \vdots & \vdots & \vdots & \vdots
		\end{tikzcd}
		\]
  with commutativity relations for all squares and nilpotency relation for composition of $4$ vertical arrows. Explicitly, the objects of $\operatorname{Rep}B$ are $k$-linear functors $B\to \operatorname{Mod}k$, i.e. are given by associating a $k$-vector space $M_{i,j}$ at each vertex $ij$ of $B$ and a linear transformation at each arrow of $B$ such that the commutativity and nilpotency relations hold. Note that $B$ is given by a Galois cover of $\Lambda Q$ in the sense of \cite[Section 3]{Gab81} where $\Lambda=k[x]/(x^4)$ and $Q=1\to 2\to 3\to 4$, and where $G=\mathbb{Z}$ acts on $B$ by translation vertically. Let $\operatorname{rep}B$ denote the representations $M$ of $B$ of total finite dimension, i.e. for which $M_{i,j}$ is finite-dimensional for all vertices $ij$ and non-zero for only finitely many vertices $ij$.  Consider the pushdown functor
  $$F_\lambda\colon \operatorname{rep}B\to \operatorname{rep}(Q,\operatorname{mod}\Lambda )$$
  Explicitly, for a vertex $j$ in $Q$ we have a $\Lambda=k[x]/(x^4)$-module $$(F_\lambda M)_j=\bigoplus_{i\in \mathbb{Z}}M_{i,j}$$ where the action of $x$ is given by the vertical maps $M_{i,j}\to M_{i-1,j}$. For an arrow $j\to j+1$ in $Q$ we have a map $(F_\lambda M)_j\to (F_\lambda M)_{j+1}$ of $\Lambda$-modules defined by the horizontal morphisms $M_{i,j}\to M_{i,j+1}$. Clearly, $F_\lambda$ is faithful and exact, and
  by \cite[Lemma 3.5]{Gab81} it also preserves indecomposables. Note that $F_\lambda M$ lies in $\operatorname{mono}(Q,\Lambda)$ if and only if the horizontal maps of $M$ are monic. %we have a faithful and exact functor $$F_\lambda\colon \operatorname{rep}B\to \operatorname{rep}(Q,\operatorname{mod}\Lambda )$$ preserving indecomposables. Explicitly, for $j\in Q_0$ we have a $\Lambda=k[x]/(x^4)$-module $$(F_\lambda M)_j=\bigoplus_{i\in \mathbb{Z}}M_{i,j}$$ where the action of $x$ is given by the vertical maps $M_{i,j}\to M_{i-1,j}$, and for an arrow $j\to j+1$ in $Q$ we have a map $(F_\lambda M)_j\to (F_\lambda M)_{j+1}$ of $\Lambda$-modules defined by the horizontal morphisms $M_{i,j}\to M_{i,j+1}$. In particular, $F_\lambda M$ lies in $\operatorname{mono}(Q,\Lambda)$ if and only if the horizontal maps of $M$ are monic.
  
  Now consider the subcategory of $\operatorname{rep}B$ given by representations of the form
\[
		\begin{tikzcd}
		 & & & 44\arrow{d} \\
		 &  &33 \arrow{d}\arrow{r} & 34\arrow{d} \\
  21 \arrow[d] \arrow{r} & 22 \arrow{r}\arrow{d} &23 \arrow{d}\arrow{r} & 24\arrow{d} \\
  11  \arrow[r,equal]& 12 \arrow[r,equal] &13 \arrow[r,equal] & 14 
		\end{tikzcd}
		\]
They can be identified with representations (or modules) of the path algebra $A_{44}$, given by the quiver depicted on the left 
\[
		\begin{tikzcd}
		 & & & 44\arrow{d} \\
		 &  &33 \arrow{d}\arrow{r} & 34\arrow{d} \\
  21 \arrow{r} & 22 \arrow{r} &23 \arrow{r} & 24\arrow{d} \\
  & && 14 
		\end{tikzcd} \quad \begin{tikzcd}
		 & & & 44\arrow{d} \\
		 &  &33 \arrow{d}\arrow{r} & 34\arrow{d} \\
  & 22 \arrow{r} &23 \arrow{r} & 24\arrow{d} \\
  & && 14 
		\end{tikzcd}
		\]
with commutativity relation for the square. Let $A_{43}$ be the subalgebra of $A_{44}$ with support in all vertices except $21$, depicted on the right. Consider the pullback functor 
\[
		\begin{tikzcd}
		 & & & M_{44}\arrow{d} \\
		 &  & & M_{34}\arrow{d} \\
  & M_{22} \arrow{r} &M_{23} \arrow{r} & M_{\tilde{33}} \arrow{d} \\
  &&& M_{24}\arrow[d] \\
  & &&  M_{14} 
		\end{tikzcd}\quad \mapsto \quad \begin{tikzcd}
		 & & & M_{44}\arrow{d} \\
		 &  &M_{33} \arrow{d}\arrow{r} & M_{34}\arrow{d} \\
  & M_{22} \arrow{r} &M_{23} \arrow{r} & M_{24}\arrow{d} \\
  & && M_{14} 
		\end{tikzcd}
  \]
  obtained by taking the pullback $M_{33}$ of $M_{34}\to M_{\tilde{33}}$ along $M_{23}\to M_{\tilde{33}}$, and then forgetting $M_{\tilde{33}}$. It is a functor $$\operatorname{rep}\tilde{\mathbb{E}}_6\to \operatorname{rep}A_{43}$$ where $\operatorname{rep}A_{43}$ and $\operatorname{rep}\tilde{\mathbb{E}}_6$ denotes the categories of finite-dimensional representations of $A_{43}$ and the $\tilde{\mathbb{E}}_6$-quiver with suitable orientation as depicted above. The functor can equivalently be described as the Hom-functor $$\operatorname{Hom}_{\tilde{\mathbb{E}}_6}(T,-)\colon \operatorname{rep}\tilde{\mathbb{E}}_6\to \operatorname{rep}A_{43} \quad \quad \text{where } T=N\oplus\bigoplus_{P\neq P_{\tilde{33}}}P. $$  Here $N$ is the unique indecomposable $\tilde{\mathbb{E}}_6$-module equal to $k$ at vertices $23,34,\tilde{33},24,14$, and $0$ otherwise, and the remaining sum is taken over all isomorphism classes of indecomposable projective $\tilde{\mathbb{E}}_6$-modules $P$ except the one with simple top concentrated at vertex $\tilde{33}$. Note that $T$ is a tilting module satisfying $\operatorname{End}_{\tilde{\mathbb{E}}_6}(T)^{\operatorname{op}}\cong A_{43}$, so $A_{43}$ is tilted of type $\tilde{\mathbb{E}}_6$. 

Let $\operatorname{mono}\tilde{\mathbb{E}}_6$ be the subcategory of $\operatorname{rep}\tilde{\mathbb{E}}_6$ consisting of representations where 
$$M_{22}\to M_{23}\quad \text{and} \quad M_{23}\to M_{\tilde{33}} \quad \text{and} \quad M_{23}\to M_{\tilde{33}}\to M_{24}$$
are monic. We claim that $\operatorname{mono}\tilde{\mathbb{E}}_6$ contains all the preprojective $\tilde{\mathbb{E}}_6$-modules. Indeed, a representation $M$ lies in $\operatorname{mono}\tilde{\mathbb{E}}_6$ if and only if $$\operatorname{Hom}_{\tilde{\mathbb{E}}_6}(S_{22},M)=\operatorname{Hom}_{\tilde{\mathbb{E}}_6}(S_{23},M)=\operatorname{Hom}_{\tilde{\mathbb{E}}_6}(T,M)=0$$ where $S_{22}$ and $S_{23}$ are the simples at vertex $22$ and $23$, respectively, and $T$ is the unique indecomposable $\tilde{\mathbb{E}}_6$-module equal to $k$ at vertices $23$ and $\tilde{33}$, and $0$ otherwise. Since $S_{22}$, $S_{23}$ and $T$ are all preinjective, the claim follows.

Next we claim that there exists indecomposable preprojective $\tilde{\mathbb{E}}_6$-modules $M$ with the dimension of $M_{22}$ arbitrary large. Indeed, first note that any $\tilde{\mathbb{E}}_6$-module is described by a morphism $P^n\to M'$ where $M'$ is the $\mathbb{E}_6$-module obtained from $M$ by deleting the vertex $22$, and $P$ is the projective $\mathbb{E}_6$-module with simple top at $23$, and $n$ is the dimension of $M_{22}$. Since non-isomorphic indecomposable preprojective modules have different dimension vectors (e.g. see \cite[Proposition 11.6 (iii)]{GLS17}), we can find $M$ indecomposable preprojective such that $\operatorname{dim}M=n+\operatorname{dim}M'$ is arbitrarily large, using that $\tilde{\mathbb{E}}_6$ is representation-infinite. Now define 
 $$s_n=\sum_K \operatorname{dim}(\operatorname{Hom}_{\mathbb{E}_6}(P^n,K))$$
 where the sum is over all isomorphism classes of indecomposable $\mathbb{E}_6$-modules $K$.  Note that the sum is finite since $\mathbb{E}_6$ is of finite representation type. Then, for any morphism $P^n\to M'$ where $M'$ has more than $s_n$ indecomposable summands, there must exists an indecomposable summand $M'_1$ of $M'$ such that $M'=M'_1\oplus M_2'$ and where $P^n\to M'_1$ factors through $P^n\to M'_2$, say via $g\colon M'_2\to M'_1$. Composing with the isomorphism 
\[
\begin{pmatrix}
1&-g\\
0&1
\end{pmatrix}\colon M'_1\oplus M'_2\xrightarrow{\cong} M'_1\oplus M'_2
\]  
we see that $M$ can be represented by a map $\begin{pmatrix}
f\\0
\end{pmatrix}\colon P^n\to M'_1\oplus M'_2$ for some $f$. But this implies that $M$ is not indecomposable. Hence, the dimension of $M_{22}$ must grow as the dimension of $M'$ grows. In particular, it can be arbitrarily large.

Now let $\mathcal{C}$ be the subcategory of $\operatorname{mono}\tilde{\mathbb{E}}_6$ where $M_{23}\oplus M_{34}\to M_{\tilde{33}}$ is epic, or equivalently where the Ext-group $$\operatorname{Ext}^1_{\tilde{\mathbb{E}}_6}(T,M)\cong \operatorname{Ext}^1_{\tilde{\mathbb{E}}_6}(N,M)$$
is $0$. Note that there are only finitely many indecomposable representations which are not in $\mathcal{C}$, since $N$ is a preprojective module. Hence, by the previous paragraph we can find an indecomposable preprojective $\tilde{E}_6$-modules $M$ in $\mathcal{C}$ for which $d=\operatorname{dim}M_{22}\geq 3$. Note that the pullback functor gives an equivalence $\mathcal{C}\xrightarrow{\cong}\operatorname{mono}A_{43}$ where $\operatorname{mono}A_{43}$ is the subcategory of $\operatorname{rep}A_{43}$ where 
$$M_{22}\to M_{23}\quad \text{and} \quad M_{33}\to M_{34}\quad \text{and} \quad M_{23}\to M_{24}$$ are monic. Let $X\in \operatorname{mono}A_{43}$ be the image $M$ under this equivalence. Since $M$ is preprojective we have that $\operatorname{End}_{\tilde{\mathbb{E}}_6}(M)=k$, and so $\operatorname{End}_{A_{43}}(X)=k$. Consider $X$ as an $A_{44}$-module, and let $Y$ be the simple $A_{44}$-module concentrated at vertex $21$. Note that $X,Y$ form an orthogonal pair in the sense that $$\operatorname{Hom}_{A_{44}}(X,Y)=0=\operatorname{Hom}_{A_{44}}(Y,X)\quad \text{and} \quad \operatorname{End}_{A_{44}}(X)=k=\operatorname{End}_{A_{44}}(Y).$$ Since $\operatorname{Ext}^1_{A_{44}}(X,Y)=0$ and $\operatorname{Ext}^1_{A_{44}}(Y,X)=d$, we have a fully faithful exact functor $$F\colon \operatorname{rep}K_d\to \operatorname{rep}A_{44}$$ by \cite[1.5 Lemma]{Rin76}, where $K_{d}$ is the path algebra of the $d$th Kronecker quiver, i.e. the quiver with two vertices $1$ and $2$ and $d$ arrows from $1$ to $2$.  The representations in the image of $F$ have the following property. If the $K_d$-module $L$ has dimension type $(a,b)$, there is a short exact sequence 
\[
0\to X^b\to F(L)\to Y^a\to 0
\]
of $A_{44}$-modules. Note that the horisontal maps for the representation $F(L)$ are monic, unless $F(L)$ contains the simple injective module $Y$ as a summand. 

Composing $F$ with the inclusion of $\operatorname{rep}A_{44}$ into $\operatorname{rep}B$ where $B$ is the path algebra of the infinite quiver above, we get a fully faithful exact functor $\tilde{F}\colon \operatorname{rep}K_d\to \operatorname{rep}B$. Note that non-isomorphic $K_{d}$-modules will be sent to different orbits under the action of $G=\mathbb{Z}$ by vertical translation. Hence, by \cite[Lemma 3.5]{Gab81} the composite
\[
\operatorname{rep}K_d\xrightarrow{\tilde{F}} \operatorname{rep}B\xrightarrow{F_{\lambda}} \operatorname{rep}(Q,\operatorname{mod}\Lambda)
\] 
reflects isomorphisms. We also know $F_\lambda$  is faithful, exact, and preserves indecomposables, so the same must hold for $F_\lambda\circ \tilde{F}$. Note that the image of a $K_d$-module $L$ lies in $\operatorname{mono}(Q,\Lambda)$ if and only if $L$ does not contain the simple injective $K_d$-module as a summand. Since $K_d$ is a wild algebra for $d\geq 3$, we can find a fully faithful exact functor $\operatorname{rep}\Gamma\to \operatorname{rep}K_d$ where $\Gamma$ is the quiver with one vertex and two loops. For a construction see e.g. \cite[Lemma 10.2.2]{Kra08}. Since $\operatorname{rep}\Gamma$ has no simple injective object, the image of this functor cannot contain the simple injective $K_d$-module. Hence, the composite 
\[
\operatorname{rep}\Gamma\to \operatorname{rep}K_d\xrightarrow{\tilde{F}}\operatorname{rep}B\xrightarrow{F_\lambda} \operatorname{rep}(Q,\operatorname{mod}\Lambda)
\]
must land in $\operatorname{mono}(Q,\Lambda)$. Since it is faithful, exact, preserves indecomposables, and reflects isomorphisms, this shows that $\operatorname{mono}(Q,\Lambda)$ is wild.
\end{proof}

	%------
	% Insert acknowledgments and information
	% regarding funding at the end of the last
	% section, i.e., right before the bibliography.
	%------
	
\section*{Acknowledgements}

I would like to thank the Centre for Advanced Study in Oslo, where a large part of this manuscript was written up. I am also grateful to Julian Külshammer for computing Example \ref{Example:D_n4OverCyclicLength2}, and to Julian Külshammer and Chrysostomos Psaroudakis for comments on an earlier version of the manuscript. I would like to thank Markus Schmidmeier for letting me know about the references \cite{Rum86,Rum90}. Finally, I am grateful to the anonymous referee for several helpful comments and corrections, and for providing the statement and proof of Theorem \ref{Theorem:RefereeWild}.

\bibliographystyle{alpha}
\bibliography{publication}

\end{document}